\documentclass[10pt]{amsart}
\usepackage{amsthm}          % theorem environments
\usepackage{tikz-cd}
\usepackage{amsmath}
\usepackage{amssymb}
\usepackage{physics}
\usepackage{enumerate}
\usepackage{comment}
\everymath{\displaystyle}
\usepackage{mathrsfs}

\newtheoremstyle{spaced}%
{6pt}     % space above
{12pt}    % space below
{\itshape} 
{}        % indent
{\bfseries} % theorem head font
{.}       % punctuation after head
{ }       % space after head
{}        % head spec

\theoremstyle{spaced}
\newtheorem{theorem}{Theorem}[section]
\newtheorem{lemma}[theorem]{Lemma}
\newtheorem{proposition}[theorem]{Proposition}
\newtheorem{corollary}[theorem]{Corollary}
\newtheorem{definition}[theorem]{Definition}

\newtheorem{maintheorem}{Theorem}

\numberwithin{equation}{section}

\newcommand{\RR}{\mathbb{R}}
\newcommand{\QQ}{\mathbb{Q}}

\newcommand{\BB}{\textbf{B}}
\newcommand{\clBB}{\overline{\textbf{B}}}

\newcommand{\cl}{\overline}
\newcommand{\Gr}{\operatorname{Gr}}

\newcommand{\CT}{\operatorname{\widehat{\textbf{T}}}}
\newcommand{\CN}{\operatorname{\widehat{\textbf{N}}}}

\newcommand{\NN}{\mathbb{N}}

\newcommand{\ind}{\mathbf{1}}

\newcommand{\BT}{\mathbf{T}}
\newcommand{\HpS}{\mathsf{Sel}_p(S)}
\newcommand{\Hp}{\mathsf{Sel}_p}
\newcommand{\Hq}{\mathsf{Sel}_q}
\newcommand{\HinftyS}{\mathsf{Sel}_\infty(S)}
\newcommand{\Hinfty}{\mathsf{Sel}_\infty}
\newcommand{\Hpone}{\mathsf{Sel}_{p,1}}
\newcommand{\Hpr}{\mathsf{Sel}_{p,r}}
\newcommand{\Hqs}{\mathsf{Sel}_{q,s}}
\newcommand{\Sel}{\mathsf{Sel}}

\title[]{The Clarke tangent and normal cone to decomposable sets in (products of) Lebesgue spaces}
\author{Petar Evgeniev}
\address{Faculty of mathematics and informatics, Sofia University}
\email{psevgeniev@uni-sofia.bg}

\thanks{ This research was supported by the Bulgarian National Science Fund under Grant No. KP-06-H92/6 (December 8, 2025); and partially supported by Sofia University Grant No. 80-10-31 ( 01.04.2026). %80-10-116 (28.05.2025) 
}

\begin{document}
	
	\begin{abstract}
		Let $(T,\Sigma,\mu)$ be a complete, $\sigma$-finite measure space, let $Y,Z$ be a separable Banach spaces, and let $S : T \rightrightarrows Z$ be a measurable multifunction with nonempty closed values. We study the relation between the Clarke tangent cone to the decomposable set $\Sel_p(S):=\{x \in L^p(T,Z) : x(t) \in S(t) \hspace{0.1cm} \text{a.e.}\}$ and the $L^p$-selections of the pointwise Clarke tangent cones. We prove that
		$$
		\CT_{\Sel_p(S)}(x) = \{v \in L^p(T,Z) : v(t) \in \CT_{S(t)}(x(t)) \hspace{0.1cm} \text{a.e.} \} \hspace{0.2cm} \text{for any $p \in [1, \infty)$},
		$$
		More generally, we consider this problem in $L^p(T,Y) \times L^r(T,Z), p,r \in [1,\infty),$ and denote $S : T \rightrightarrows Y \times Z$ and $\Sel_{p,r}(S):=\{(x,y) \in L^p(T,Y) \times L^r(T,Z) : (x(t),y(t)) \in S(t) \hspace{0.1cm} \text{a.e.}\},$
		 The global-to-pointwise inclusion is shown to hold for all exponents unconditionally, whereas the reverse inclusion is obtained from an equi-integrable correction condition. Consequently, if $p,r$ are finite it holds $$\CT_{\Sel_{p,r}(S)}(x,y)=\Sel_{p,r}\left\{t \mapsto \CT_{S(t)}(x(t),y(t))\right\} \hspace{0.2cm} \text{for any $(x,y) \in \Sel_{p,r}(S).$}$$ Whereas, if $(p,r)=(1,\infty)$ the equality holds if $\Sel_{\infty,1}(S)$ satisfies a variational correction condition (\textbf{V}). Condition shown to hold whenever $S(t):=\Gr F(t,\cdot),$ where $F : T \times Y \rightrightarrows Z$ is measurable in $t$ and $F(t,y) \subset F(t,x)+ \ell(t) \norm{y-x} \clBB_{Z}.$
		 
		 Applications are given to nonsmooth optimization with pointwise constraints, graphs of Nemytskii operators, and a multiplier rule for nonconvex integral programs is derived.
	\end{abstract}
	
	\maketitle
	
	\section{Formulation of the problem and literature review}
	Let $1 \leq p \leq \infty.$ By a pointwise-defined subset of a Lebesgue (Bochner) space $L^p(T,Z)$ we mean a set of the form 
	$$\HpS:=\{z \in L^p(T,Z) : z(t) \in S(t) \hspace{0.1cm} \text{a.e.}\}.$$
	Here $(T,\Sigma,\mu)$ is a complete, $\sigma$-finite measure space, $Z$ is a separable Banach space, and $S : T \rightrightarrows Z$ is a measurable multifunction with nonempty closed values. Pointwise-defined sets occur naturally in optimal control, the calculus of variations, and nonsmooth variational problems as pure or mixed state constraints; consequently, their variational geometry is important for the analysis of the corresponding constrained optimization problems.
	
	A basic question is whether the variational geometry of $\Sel_p(S)$ can be recovered from that of the pointwise sets $S(t).$ More precisely, let $x \in \Sel_p(S)$ and, for each $t \in T,$ let $C_{S(t)}(x(t))$ denote a tangent cone of interest to $S(t)$ at $x(t)$ in $Z.$ Let $C_{\Sel_p(S)}(x)$ denote the tangent cone, of the same type, to $\Sel_p(S)$ at $x$ in $L^p(T,Z).$ Is it true that
	
	\begin{equation} \label{eq:introduction-variational-objects}
		C_{\HpS}(x)= \{z \in L^p(T,Z) : z(t) \in C_{S(t)}(x(t)) \hspace{0.1cm} \text{a.e.}\}=:\Hp \left\{t \mapsto C_{S(t)}(x(t)) \right\} ?
	\end{equation}
	If equality fails, which of the two inclusions remains valid? The corresponding normal-cone question is formulated naturally in terms of selections in $L^q(T,Z^{*}),$ where $q$ is the conjugate exponent of $p$ and given that the dual of $L^p(T,Z)$ is identified with $L^q(T,Z^{*}).$
	
	For Bouligand tangent cones, a positive answer is known under the additional pointwise regularity assumption of derivability. More precisely, if the adjacent tangent cone to $S(t)$ at $x(t)$ coincides with the corresponding Bouligand tangent cone for a.e. $t \in T,$ then equality (\ref{eq:introduction-variational-objects}) holds for every $1 \leq p < \infty;$ see Theorem 8.5.1 and Corollary 8.5.2 in \cite{AubinFrankowska1990} and \cite{Aubin1987Update}. 
	
	For $p=\infty,$ an important result appears in the work of P\'ales and Zeidan \cite{PalesZeidan1999}. Under the additional assumptions $Z=\RR^n$ and $\mu(T)<\infty,$ they prove that the set of $L^\infty$-selections of the pointwise Clarke tangent cones is obtained as the relative $L^1$-closure of the Clarke tangent cone to the decomposable set $\Hinfty(S).$ This indicates that, in the case $p=\infty,$ the natural object is generally a closure of the global Clarke tangent cone rather than the cone itself; hence one should not expect (\ref{eq:introduction-variational-objects}) without additional assumptions.
	
		\begin{theorem}[Theorem 4, \cite{PalesZeidan1999} P\'ales , Zeidan]
		Let $(T,\Sigma,\mu)$ be a complete, finite measure space, let $S : T \rightrightarrows \RR^n$ be a measurable multifunction with nonempty closed values and $x \in \Hinfty(S).$ Then
		$$
		\cl{\CT_{\HinftyS}(x)}^{L^1} = \{ v \in L^\infty(T,\RR^n) : v(t) \in \CT_{S(t)}(x(t)), \hspace{0.1cm} \text{a.e.}\}.
		$$
		The same assertion holds for the adjacent tangent cone.
	\end{theorem}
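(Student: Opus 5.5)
We prove the two inclusions separately, writing $K(t):=\CT_{S(t)}(x(t))$ and $\mathcal{K}:=\Sel_\infty\{t\mapsto K(t)\}$. Recall the sequential characterization of the Clarke tangent cone: $v\in\CT_C(x)$ if and only if for all $x_k\to x$ in $C$ and $h_k\downarrow 0$ there exist $v_k\to v$ with $x_k+h_kv_k\in C$.

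\emph{Step 1: $\cl{\CT_{\HinftyS}(x)}^{L^1}\subset\mathcal{K}$.} First observe that $\mathcal{K}$ is $L^1$-closed in $L^\infty$. Indeed, $L^1$-convergence yields an a.e.\ convergent subsequence, and each $K(t)$ is closed. It therefore suffices to show $\CT_{\HinftyS}(x)\subset\mathcal{K}$.

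Let $v\in\CT_{\HinftyS}(x)$ and suppose, for contradiction, that $A:=\{t:v(t)\notin K(t)\}$ has positive measure. The multifunction $t\mapsto K(t)$ is measurable, being a countable $\liminf$ of measurable difference quotients, so $A$ is measurable. By the Euclidean sequential characterization, for each $t\in A$ there are $y\in S(t)$ and $h>0$, arbitrarily close to $x(t)$ and $0$, such that
\[
d\bigl(v(t),(S(t)-y)/h\bigr)\ge\varepsilon(t)>0.
\]
Using a measurable selection theorem (Aumann / Kuratowski--Ryll-Nardzewski, available since $\mu$ is complete), we choose, for each $k$, $\varepsilon>0$ and a set $A'\subset A$ of positive measure, together with measurable $y_k(t)\in S(t)$ satisfying $|y_k-x|\le 1/k$, step sizes $h_k(t)\in[1/k^2,1/k]$, and
\[
d\bigl(v(t),(S(t)-y_k(t))/h_k(t)\bigr)\ge\varepsilon \quad\text{on } A'.
\]
The step sizes $h_k(t)$ depend on $t$, which the global definition does not permit; this is the main obstacle. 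We handle it by discretizing $h$: partition $A'$ into countably many measurable pieces on which $h_k$ is nearly constant, and select one piece of positive measure with a common constant $h_k$. The choice is consistent because the pointwise $\liminf$ condition holds for all small $h$ below a threshold, and a measurable threshold function is bounded below on a set of positive measure.

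Now set $x_k:=y_k$ on the selected piece and $x_k:=x$ elsewhere. Then $x_k\in\HinftyS$ and $x_k\to x$ in $L^\infty$, so there exist $v_k\to v$ in $L^\infty$ with $x_k+h_kv_k\in\HinftyS$. Hence
\[
d\bigl(v(t),(S(t)-x_k(t))/h_k\bigr)\le\|v_k-v\|_\infty\to0
\]
a.e.\ on the piece, contradicting the lower bound $\varepsilon$.

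\emph{Step 2: $\mathcal{K}\subset\cl{\CT_{\HinftyS}(x)}^{L^1}$.} Fix $v\in\mathcal{K}$. We approximate $v$ in $L^1$ by elements of $\CT_{\HinftyS}(x)$; this is where finite measure and $\RR^n$ enter. In $\RR^n$, Rockafellar's characterization shows that $K(t)$ is the $\liminf$ of $T^B_{S(t)}(y)$ as $y\to x(t)$ in $S(t)$. The uniform (in $t$) version fails, so we truncate. By Egorov-type arguments on the quantities
\[
\rho_\delta(t):=\sup\Bigl\{d\Bigl(v(t),\tfrac{S(t)-y}{h}\Bigr): y\in S(t),\ |y-x(t)|<\delta,\ 0<h<\delta\Bigr\},
\]
which are measurable, using countable dense families as in Castaing representations, we have $\rho_\delta\to0$ a.e. Egorov gives sets $T_j$ with $\mu(T\setminus T_j)<1/j$ on which $\rho_\delta\to0$ uniformly.

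Define $v_j:=v\ind_{T_j}$. We claim $v_j\in\CT_{\HinftyS}(x)$. Given $x_k\to x$ in $L^\infty$ and $h_k\downarrow0$, on $T_j$ the uniform smallness of $\rho$ lets us pick, by measurable selection, $w_k(t)$ with $|w_k-v|\le\rho_{\delta_k}+1/k$ uniformly and $x_k+h_kw_k\in S$; off $T_j$ we take $w_k=0$, which is admissible since $x_k(t)\in S(t)$. Then $w_k\to v_j$ in $L^\infty$. Finally,
\[
\|v_j-v\|_1\le\|v\|_\infty\,\mu(T\setminus T_j)\to0.
\]
The same scheme, with the adjacent cone in place of the Clarke cone and the $\liminf$ over $h$ only, yields the final assertion.

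The main obstacle is Step 1's conversion of $t$-dependent step sizes into a common step size. We expect to resolve it as described by restricting to a positive-measure set on which the pointwise witnesses admit a uniform scale.
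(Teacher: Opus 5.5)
The paper does not prove this statement; it is quoted from P\'ales--Zeidan. Its only in-paper counterpart is Corollary~\ref{th:Lp-to-pointwise}(a) with $p=\infty$, which together with your $L^1$-closedness remark gives your Step~1. Your overall argument works after one repair in Step~1.

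\emph{Step 1.} Failure of Clarke tangency of $v(t)$ at $x(t)$ yields only a \emph{sequence} of bad pairs $(y,h)$. Your claim that, at a suitable base point, all steps below a threshold are bad does not follow from the $\liminf$ definition. It is exactly Treiman's lemma, which the paper invokes to build $B_m$ and $R_\ell$, so your ``consistency'' sentence is unsupported as written. The range $h_k(t)\in[1/k^2,1/k]$ does not follow either, though you never need it. In $L^\infty$ an elementary repair suffices. For $t\in A'$, the set of $(y,h)\in S(t)\times(0,1/k)$ with $\|y-x(t)\|<1/k$ and $d_{S(t)}(y+hv(t))>\varepsilon h$ is nonempty and relatively open. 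Hence it contains some $(s_j(t),h)$ with $s_j$ from a Castaing representation and $h$ rational. So $A'$ is a countable union of measurable sets indexed by $(j,h)$, and one of them, $P_k$, has positive measure and carries a common step $h_k<1/k$. Alternatively, on your nearly-constant pieces, use that $h\mapsto d_{S(t)}(y+hv(t))$ is $\|v\|_\infty$-Lipschitz, so replacing $h_k(t)\in[a,(1+\gamma)a]$ by $a$ costs at most $\gamma a\|v\|_\infty\le\varepsilon a/2$. Because the $L^\infty$-norm only needs $\mu(P_k)>0$, it does not matter that $\mu(P_k)$ may shrink with $k$, and your contradiction $\|v_k-v\|_\infty\ge\varepsilon/2$ goes through.

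This is the real difference from the paper's route. For finite exponents, Lemma~\ref{lemma:sum-of-measurable-functions} needs a bad set whose measure is bounded below independently of the step. That is why the paper needs Treiman's lemma and the choice $\mu(R_{\ell_0})>\mu(B_{m_0})/2$. Your shortcut is cheaper but works only for $p=\infty$.

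\emph{Step 2.} This direction has no counterpart in the paper, whose pointwise-to-global results need finite exponents or a correction hypothesis such as (\textbf{V}). Your Egorov truncation is correct:
\begin{itemize}
\item $\rho_\delta$ is measurable via a Castaing representation and rational $h$, and decreases to $0$ a.e.; apply Egorov to $\min\{\rho_{1/k},1\}$ to avoid infinite values.
\item Set $\delta_k:=\max\{\|x_k-x\|_\infty,h_k\}+1/k$. Uniform smallness on $T_j$, together with a measurable selection of $S(t)\cap\bigl(x_k(t)+h_kv(t)+h_k(\sup_{T_j}\rho_{\delta_k}+1/k)\clBB\bigr)$, gives $v\ind_{T_j}\in\CT_{\HinftyS}(x)$.
\item $\|v\ind_{T_j}-v\|_{L^1}\le\|v\|_\infty\,\mu(T\setminus T_j)\to0$.
\end{itemize}
The appeal to Rockafellar's characterization is never used. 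Neither step uses $Z=\RR^n$, so after the repair your argument proves the statement for any separable Banach space $Z$.
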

	
	If $\mu(T)<\infty,$ then we can identify $L^\infty(T,\RR^n)$ as a subspace of $L^1(T,\RR^n).$ The considered $L^1$-closure is in the sense of the relative topology in $L^\infty$ inherited from $L^1.$
	
	Further, denote the embedding $J_1 : L^1 (T,\RR^n) \to [L^{\infty}(T,\RR^n)]^{*}$ given by 
	$$\left<J_1 \sigma ,v \right> =J_1 \sigma (v):=\int_T \left< \sigma(t),v(t) \right> \dd \mu(t) \hspace{0.2cm} \text{for each} \hspace{0.2cm} v \in L^\infty(T,\RR^n).$$
	
	Although $[L^\infty(T,\RR^n)]^{*}$ is larger than the range of $J_1,$ the countably additive normal functionals represented by elements of $L^1(T,\RR^n)$ admit the following complete pointwise characterization
	
	\begin{corollary}[Corollary 5, \cite{PalesZeidan1999} P\'ales, Zeidan]
		Let $(T,\Sigma,\mu)$ be a complete finite measure space, let $S : T \rightrightarrows \RR^n$ be a measurable multivalued mapping with nonempty closed values, and let $x \in \HinftyS.$
		If $\xi \in L^1(T,\RR^n),$ then $J_1 \xi \in \CN_{\HinftyS}(x)$ if and only if $\xi(t) \in \CN_{S(t)}(x(t))$ a.e. $t \in T.$
	\end{corollary}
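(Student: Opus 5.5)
We derive the corollary from Theorem 4 of \cite{PalesZeidan1999} (stated above) by polarity. Write $K:=\CT_{\HinftyS}(x)\subset L^\infty(T,\RR^n)$ and
$$
\mathcal{C}:=\{ v \in L^\infty(T,\RR^n) : v(t) \in \CT_{S(t)}(x(t)) \text{ a.e.}\}.
$$
By definition, the Clarke normal cone is the polar of the Clarke tangent cone. Hence $J_1\xi\in\CN_{\HinftyS}(x)$ if and only if $\int_T\langle\xi,v\rangle\,\dd\mu\le 0$ for all $v\in K$.

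The first step is to pass from $K$ to its relative $L^1$-closure. The functional $v\mapsto\int_T\langle\xi(t),v(t)\rangle\,\dd\mu$ is not $L^1$-continuous on $L^\infty$ when $\xi\in L^1$, so we cannot simply take limits. We handle this by truncation.

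Suppose $\int_T\langle\xi,v\rangle\,\dd\mu\le 0$ for all $v\in K$, and fix $w\in\mathcal{C}=\cl{K}^{L^1}$. Choose $v_k\in K$ with $v_k\to w$ in $L^1$. Passing to a subsequence, we may assume $v_k\to w$ a.e.

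For $M>\norm{w}_\infty$, let $\pi_M$ denote the radial projection of $\RR^n$ onto the ball $M\clBB$. Then $\pi_M\circ v_k\to w$ a.e., and $|\langle\xi,\pi_M v_k\rangle|\le M|\xi|\in L^1$. Dominated convergence would give the conclusion, provided $\pi_M\circ v_k\in K$. That is not clear.

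So instead we use the fact that $\mathcal{C}$ is itself decomposable and closed under multiplication by bounded nonnegative measurable scalars, because each $\CT_{S(t)}(x(t))$ is a cone. We would like to know the same for $K$, namely that for a measurable $A$ and $v\in K$ we have $\ind_A v\in K$. This is plausible since $\HinftyS$ is decomposable: one can splice the approximating sequences on $A$ with $x$ on $T\setminus A$.

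Granting that, replace $v_k$ by $\lambda_k v_k$ with $\lambda_k:=\min\{1,M/|v_k|\}$. This scalar function is measurable with values in $[0,1]$, and $\lambda_k v_k\in K$ as long as $K$ is stable under multiplication by such functions. That stability follows from splicing on level sets and the closedness of $K$ in $L^\infty$, by approximating $\lambda_k$ uniformly with simple functions. Then $\lambda_k v_k\to w$ a.e. with the uniform bound $M$, and dominated convergence yields $\int_T\langle\xi,w\rangle\,\dd\mu\le 0$.

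The converse direction of this step is trivial, because $K\subset\mathcal{C}$. We conclude that $J_1\xi\in\CN_{\HinftyS}(x)$ if and only if $\int_T\langle\xi,w\rangle\,\dd\mu\le0$ for all $w\in\mathcal{C}$.

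The second step is the pointwise characterization. If $\xi(t)\in\CN_{S(t)}(x(t))$ a.e., then the integrand $\langle\xi(t),w(t)\rangle$ is nonpositive a.e. for every $w\in\mathcal{C}$, and the integral inequality follows.

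Conversely, suppose the integral inequality holds for all $w\in\mathcal{C}$. The multifunction $t\mapsto\CT_{S(t)}(x(t))$ is measurable with closed convex values; we assume this measurability as known, or derive it via the distance-function description of the Clarke tangent cone. It therefore admits a Castaing representation $\{c_j\}_{j\in\NN}$, and by truncation we may take each $c_j$ bounded while keeping the family dense in each value.

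Set $A_j:=\{t:\langle\xi(t),c_j(t)\rangle>0\}$. The function $\ind_{A_j}c_j$ lies in $\mathcal{C}$, since the value $0$ always belongs to the cone. Testing with it gives $\int_{A_j}\langle\xi,c_j\rangle\,\dd\mu\le0$, and since the integrand is positive on $A_j$, we get $\mu(A_j)=0$.

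Hence, off a null set, $\langle\xi(t),c_j(t)\rangle\le0$ for every $j$. By density of $\{c_j(t)\}$ in $\CT_{S(t)}(x(t))$, this gives $\xi(t)\in\CN_{S(t)}(x(t))$.

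The main obstacle is the first step. Theorem 4 only gives density in the $L^1$ sense, and transferring the polar inequality requires showing that $K$ is stable under bounded nonnegative measurable rescaling, so that truncation stays inside $K$. If this fails directly, the alternative is to revisit the proof of Theorem 4 and extract uniformly bounded approximants from it.
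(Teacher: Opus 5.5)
The paper gives no proof of this statement; it is quoted from P\'ales--Zeidan as background. Judged on its own, your proof is correct. You derive it from their $L^1$-closure theorem by polarity, using radial truncation to keep the approximants bounded.

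The step you leave as ``plausible'' and call the main obstacle is elementary, and should be proved rather than granted. Take $v\in K$, $A\in\Sigma$ and $\varepsilon>0$, and let $\delta,\lambda$ come from the Clarke tangency of $v$. For $y\in\HinftyS$ with $\norm{y-x}_{L^\infty}\le\delta$ and $\eta\in[0,\lambda]$, pick $z\in\HinftyS$ with $\norm{z-y-\eta v}_{L^\infty}\le\eta\varepsilon$. Set $z':=\ind_A z+\ind_{T\setminus A}y$. Then $z'\in\HinftyS$ and $z'-y-\eta\ind_A v=\ind_A(z-y-\eta v)$, so $\ind_A v\in K$ with the same $\delta,\lambda$.

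The same splicing applies to finitely many $v_i\in K$ on a partition $A_1,\dots,A_m$, taking $\delta=\min_i\delta_i$ and $\lambda=\min_i\lambda_i$. It gives $\sum_i\ind_{A_i}v_i\in K$. Since $K$ is a cone, $\lambda v\in K$ for every simple $\lambda\ge0$. Closedness of $K$ in $L^\infty$ extends this to bounded measurable $\lambda\ge 0$, which is exactly what $\lambda_k v_k=\pi_M\circ v_k$ requires.

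One correction: you must splice with the perturbed base point $y$, not with $x$ as you wrote. The function $\ind_A z+\ind_{T\setminus A}x$ differs from $y+\eta\ind_A v$ by $\ind_{T\setminus A}(x-y)$, which is not $O(\eta)$.

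Two comparisons with what the paper does prove.

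First, only the ``only if'' direction needs P\'ales--Zeidan. The ``if'' direction uses only $K\subset\mathcal{C}$, which the paper's Corollary~\ref{th:Lp-to-pointwise}(a) gives for $p=\infty$ (via Theorem~\ref{theorem:productMainTheorem-global-to-pointwise-p-r}).

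Second, your Step 2 is the same polarity argument the paper uses for its normal-cone corollary when $1\le p<\infty$. There Theorem~\ref{thorem:Main-Lp-theorem} gives an exact equality, so no truncation step is needed; the lack of such an equality is exactly what forces your Step 1 at $p=\infty$. Your pointwise step tests against $\ind_{A_j}c_j$ for a bounded Castaing representation of $t\mapsto\CT_{S(t)}(x(t))$. This is slightly cleaner than the paper's version, because your sets $A_j$ are obviously measurable. The paper's sets $A_n$ are defined by an existential quantifier and need a projection or measurable-selection argument.
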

	
	In their study of nonsmooth pure-state constraints, Krastanov and Ribarska obtained related partial results in the stationary setting, that is, when $S$ is a constant multifunction, for $p=\infty$ and $T=[a,b];$ see Propositions 3.1. and 3.2. in \cite{KrastanovRibarska2024PureStateEquality}.
	
	In recent works, Mehlitz and Wachsmuth \cite{MehlitzWachsmuth2018}, \cite{MehlitzWachsmuth2019WeakClosure} examined this problem for a hierarchy of tangent and normal cones in the case $1 < p < \infty,$ under the additional assumptions $Z=\RR^n,$ the sets $S(t)$ are derivable, and $(T,\Sigma,\mu)$ is nonatomic.
	Particular emphasis is put on limiting, Clarke, and Fr\'echet normal cones.
	
	For subsequent related work and applications see also \cite{Mehlitz2019SNC}, \cite{Wachsmuth2018Sobolev}, \cite{HarderWachsmuth2018SobolevNormal}, \cite{HarderWachsmuth2022MPCC}, \cite{PalesZeidan2000MeasurableConstraints}, \cite{PalesZeidan2004CriticalTangent} and \cite{CorellaWachsmuth2024}.
	
	Thus, the available results provide pointwise formulas under pointwise regularity assumptions, such as derivability or Clarke regularity; relative $L^1$-closure formulas in the $L^\infty$ setting; or formulas in finite-dimensional settings subject toadditional hypotheses. In the present paper we study the corresponding problem for the Clarke tangent cone without imposing regularity assumptions on the pointwise sets.
	To the author's knowledge, an exact pointwise representation of the Clarke tangent cone at this level of generality has not previously been available.
	
	We prove that, for every $1 \leq p < \infty$ and every separable Banach space $Z,$
	$$
	\CT_{\Sel_p(S)}(x) = \Sel_p \left\{ t \mapsto \CT_{S(t)}(x(t))\right\}.
	$$
	Whenever the dual of $L^p(T,Z)$ admits the usual identification, this yields the corresponding pointwise formula for the Clarke normal cone. We also consider this problem in products of Lebesgue spaces $L^p(T,Y) \times L^r(T,Z), 1\leq p,r \leq \infty,$ where the pointwise-defined sets become 
	$$\Sel_{p,r}(S):=\{(x,y) \in L^p(T,Y) \times L^r(T,Z) : (x(t),y(t)) \in S(t) \hspace{0.1cm} \text{a.e.}\}.$$ 
	The global-to-pointwise inclusion holds throughout the full range of exponents, whereas the converse direction is obtained by means of an equi-integrable correction condition, which holds automatically whenever the exponents are finite, and which provides exact formulas in the space $L^\infty \times L^1$ under natural variational correction assumption. The latter space is particularly important for the linearization of differential inclusions, the calculus of variations, and other classical problems involving trajectory-velocity pairings.
	
	For the pointwise-to-global direction, pointwise uniform tangency is transferred to the product Lebesgue space by an equi-integrable correction mechanism. For the converse inclusion, failure of pointwise Clarke tangency is quantified using Treiman's lemma. The set on which pointwise tangency fails is then decomposed into countably many measurable pieces, which are used to construct a nearby global base point contradicting global Clarke tangency.
	
	An immediate application is constrained optimization in $L^p(T,Z)$ with pointwise constraints of the form $x \in \Sel_p(S).$ By polarity, the tangent-cone representation yields a pointwise description of Clarke normal multipliers and hence provides a direct tool for deriving necessary optimality conditions for such problems.
	
	Although the present paper concerns tangent and normal cones to decomposable constraint sets rather than subdifferentials of integral functionals, it is related to the classical problem of representing generalized derivatives or subdifferentials of an integral functional in terms of measurable selections of the corresponding pointwise objects associated with the integrand. Such subdifferential formulas are generally more delicate than the tangent cone formula considered here, and a positive answer at the same level of generality cannot be expected.
	
	This motivates necessary conditions for optimization problems formulated directly in terms of Clarke tangent and normal cones; see, for example, the Lagrange multiplier rule of Bivas, Krastanov and Ribarska Theorem 3.3 in \cite{BivasKrastanovRibarska2020-tangential-transversality}.
	
	We recall this related line of work briefly. Let $f : T \times Z \to \RR \cup \{+\infty\}$ be a normal integrand, that is, measurable in $t$ and lower semicontinuous in $u \in Z.$ Its integral functional is the mapping
	$$u \mapsto I_f (u):=\inf\Big\{ \int_T a \dd \mu : a \in L^1(T), \hspace{0.1cm} a(t) \geq f(t,u(t)) \hspace{0.1cm} \text{a.e.}\Big\}.$$
	The convex case, in which $u \mapsto f(t,u)$ is convex for a.e. $t,$ was studied by Rockafellar \cite{Rockafellar1968IntegralsConvex}, \cite{Rockafellar1969MeasurableDependence}, \cite{Rockafellar1971IntegralsConvexII}, \cite{Rockafellar1971ConvexIntegralDuality} and \cite{Rockafellar1976Integral}.
	
	The Lipschitzian case was studied by Clarke \cite{Clarke1981}; and see also pp. 75--85 of \cite{ClarkeOld}.
	
	Giner \cite{Giner1998} studied the nonsmooth case for $Z=\RR^n$ and, under a growth condition on the normal integrand, proved the inclusion
	$$
	\partial^{\uparrow} I_f (x) \subset \{z \in L^q(T,\RR^n) : z(t) \in \partial^{\uparrow} f(t,x(t)) \hspace{0.1cm} \text{a.e.}\} =:\Sel_q \left\{ t \mapsto \partial^{\uparrow} f(t,x(t))\right\},
	$$
	where $1 \leq p < \infty, \hspace{0.1cm} \frac{1}{p} + \frac{1}{q} =1,$ and $\partial^{\uparrow}$ is Rockafellar's generalized directional subdifferential \cite{Rockafellar1979}, \cite{Rockafellar1980}.
	This direction was further developed in several works of Giner, concerning
	Lipschitzian properties \cite{Giner2007}, calmness and contingent subgradients
	\cite{Giner2009}, and the relation between Clarke and limiting subdifferentials \cite{Giner2017ClarkeLimiting}.
	
	More recently Giner and Penot provided a systematic treatment of the subject. They study integral functionals on $L^p(T,Z),$ where $(T,\Sigma,\mu)$ is a complete $\sigma$-finite measure space, $1 < p < \infty$ and $Z$ is a separable Banach space.  Under regularity assumptions on the integrand, they prove exact equalities for various subdifferentials; see Theorems 39-41 in
	\cite{GinerPenot2018}. In particular as a consequence of Theorem 39 in \cite{GinerPenot2018}; if the values of $S : T \rightrightarrows Z$ along $x \in \HpS$ are regular in the sense that the Clarke tangent cone to $S(t)$ at $x(t)$ coincides with the Bouligand tangent cone to $S(t)$ at $x(t)$ for a.e. $t \in T,$ then (\ref{eq:introduction-variational-objects}) holds for the Clarke and Bouligand tangent cones. Indeed, take the integrand
	$$f(t,u)=\iota_{S(t)}(u):=\begin{cases}
		0, \hspace{0.2cm} u \in S(t) \\
		+\infty, \hspace{0.2cm} u \notin S(t)
	\end{cases}.
	$$
	The conditions $(C_p)$ and $(G),$ in Theorem 39 in \cite{GinerPenot2018}, are satisfied for this $f.$ The remaining assumption is C-D regularity. For this $f$ it is equivalent to the following equality
	$$\CT_{S(t)}(x(t)) = \BT_{S(t)}(x(t)) \hspace{0.2cm} \text{a.e.},$$
	where $\CT$ and $\BT$ denote the Clarke and Bouligand tangent cones, respectively.
	The upper-integral functional for this $f$ is $I_{f}(x) = \iota_{\HpS}(x).$ Thus the conclusion is that
	$
	\CT_{\HpS}(x) = \Sel_p \left\{  t \mapsto \CT_{S(t)}(x(t)) \right\}  = \BT_{\Sel_p (S)}(x).
	$
	
		\begin{theorem}[Giner, Penot; Theorem 39 in \cite{GinerPenot2018}]
		Let $(T,\Sigma,\mu)$ be a complete, $\sigma$-finite measure space, $Z$ be a separable Banach space, $1 < p < \infty$ and $S : T \rightrightarrows Z$ be measurable multifunction with nonempty closed values. If $x \in \Sel_p(S)$ and $\CT_{S(t)}(x(t)) = \BT_{S(t)}(x(t))$ a.e., then
		$$
		\CT_{\HpS}(x) = \{ v \in L^p(T,Z) : v(t) \in \CT_{S(t)}(x(t)) \hspace{0.1cm} \text{a.e.}\} = \BT_{\HpS}(x).
		$$
	\end{theorem}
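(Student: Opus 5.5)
We prove the statement (Theorem 39 of \cite{GinerPenot2018} specialized to indicator integrands) through a chain of three inclusions:
$$
\CT_{\HpS}(x) \subset \BT_{\HpS}(x) \subset \Sel_p\left\{t \mapsto \BT_{S(t)}(x(t))\right\} = \Sel_p\left\{t \mapsto \CT_{S(t)}(x(t))\right\} \subset \CT_{\HpS}(x).
$$
The first inclusion holds for any closed set. The equality in the middle is the regularity hypothesis. So the work lies in the second and the last inclusions.

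For the second inclusion, take $v \in \BT_{\HpS}(x)$. Then there are $h_n \downarrow 0$ and $v_n \to v$ in $L^p(T,Z)$ with $x + h_n v_n \in \HpS$. Passing to a subsequence, $v_n(t) \to v(t)$ almost everywhere, and $x(t) + h_n v_n(t) \in S(t)$ almost everywhere. Hence $v(t) \in \BT_{S(t)}(x(t))$ almost everywhere. This uses only the definition of the contingent cone together with a countable union of null sets.

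The last inclusion is the pointwise-to-global direction for the Clarke cone, and it is the main obstacle. Fix $v$ with $v(t) \in \CT_{S(t)}(x(t))$ almost everywhere. Take sequences $x_n \to x$ in $\HpS$ and $h_n \downarrow 0$; we must find $v_n \to v$ in $L^p$ with $x_n + h_n v_n \in \HpS$. After passing to a subsequence we may assume $x_n(t) \to x(t)$ almost everywhere and $\norm{x_n(t)-x(t)} \le g(t)$ for some $g \in L^p$. Define
$$
\varphi_n(t) := d\bigl(v(t), (S(t)-x_n(t))/h_n\bigr) = \frac{1}{h_n}\, d\bigl(x_n(t)+h_n v(t), S(t)\bigr).
$$
This function is measurable, since $S$ is measurable with closed values and $Z$ is separable. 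Because $S(t)$ is closed, the Clarke cone has the characterization $\lim_{y \to_S x(t),\, h\downarrow 0} d(v(t),(S(t)-y)/h) = 0$, so $\varphi_n(t) \to 0$ almost everywhere. Also $\varphi_n(t) \le \norm{v(t)}$, because $x_n(t) \in S(t)$. Dominated convergence then gives $\norm{\varphi_n}_{L^p} \to 0$, and this is where $p<\infty$ is used.

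By the Kuratowski--Ryll-Nardzewski / Castaing measurable selection theorem, applied to the measurable multifunction $t \mapsto \{w : x_n(t)+h_n w \in S(t),\ \norm{w-v(t)} \le \varphi_n(t) + 1/n\}$ (this is the delicate point: its values are nonempty but need not be closed, so we select an approximate nearest point instead), we obtain measurable $v_n$ with $x_n + h_n v_n \in S$ almost everywhere and $\norm{v_n - v} \le \varphi_n + \varepsilon_n$ pointwise. To keep the error in $L^p$ on a $\sigma$-finite space, take $\varepsilon_n(t) = w(t)/n$ with a strictly positive $w \in L^p$. Then $v_n \in L^p$, $\norm{v_n - v}_p \to 0$, and $x_n + h_n v_n \in \HpS$. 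Since every subsequence has a further subsequence along which this works, the full sequence property follows, and therefore $v \in \CT_{\HpS}(x)$.

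Closing the chain gives all three equalities. The same argument for the last inclusion shows that $\Sel_p\{t\mapsto \CT_{S(t)}(x(t))\} \subset \CT_{\HpS}(x)$ holds without any regularity hypothesis.
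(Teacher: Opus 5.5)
Your proof is correct, but it follows a different route from the paper. The paper does not prove this statement from scratch. It obtains it by specializing Giner--Penot's Theorem~39 on subdifferentials of integral functionals to the indicator integrand $f(t,u)=\iota_{S(t)}(u)$. For this integrand $I_f=\iota_{\HpS}$, the conditions $(C_p)$ and $(G)$ hold automatically, and C--D regularity of $f$ is exactly the hypothesis $\CT_{S(t)}(x(t))=\BT_{S(t)}(x(t))$ a.e.

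Your chain $\CT_{\HpS}(x)\subset\BT_{\HpS}(x)\subset\Sel_p\{t\mapsto\BT_{S(t)}(x(t))\}=\Sel_p\{t\mapsto\CT_{S(t)}(x(t))\}\subset\CT_{\HpS}(x)$ is instead self-contained and elementary. The regularity hypothesis lets you bypass the hard inclusion $\CT_{\HpS}(x)\subset\Sel_p\{t\mapsto\CT_{S(t)}(x(t))\}$. In the paper's own framework that inclusion is Theorem~\ref{theorem:productMainTheorem-global-to-pointwise-p-r}, which needs Treiman's lemma and a countable patching of a Castaing representation. You go through the Bouligand cone instead, where the global-to-pointwise step is just an a.e.\ subsequence argument.

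For the remaining inclusion you use three ingredients: the sequential characterization of the Clarke cone, dominated convergence with majorant $\norm{v(\cdot)}$, and an approximate measurable selection. This is essentially the classical direct argument; the paper mentions a variant in Giner's thesis. It is simpler than the paper's Theorem~\ref{theorem:osnoven-rezultat-potochovo-kum-globalno}, which uniformizes pointwise tangency on the sets $E_m$, controls the bad set by Chebyshev's inequality, and patches with an equi-integrable correction. That heavier machinery is designed to also cover products with an infinite exponent, such as $L^\infty\times L^1$.

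Your route buys brevity, and it works already for $p=1$. What it cannot deliver is the paper's main point: the Clarke equality holds with no regularity assumption at all.

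Three small corrections:
\begin{itemize}
\item The multifunction $t\mapsto\{w : x_n(t)+h_n w\in S(t),\ \norm{w-v(t)}\le\varphi_n(t)+\varepsilon_n(t)\}$ does have closed values, being a closed set intersected with a closed ball. It is measurable as an intersection of graph-measurable closed-valued maps on a complete measure space, so the selection theorem applies directly. The positive slack $\varepsilon_n$ is needed only for nonemptiness, since nearest points need not exist.
\item The subsequence principle should be applied to the real sequence $d_{\HpS}(x_n+h_nv)/h_n=\inf\{\norm{w-v}_{L^p} : x_n+h_nw\in\HpS\}$.
\item The dominating function $g$ for $x_n-x$ is never used.
\end{itemize}
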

	
	\section{Main contributions of the paper}
	%---------
	
	We show that, for the Clarke tangent cone, the pointwise regularity assumption can be removed. More precisely the main contribution of the current paper is the following:
	
	\begin{maintheorem} \label{thorem:Main-Lp-theorem}
		Let $(T,\Sigma,\mu)$ be a complete $\sigma$-finite measure space, let $Z$ be a separable Banach space, $1 \leq p < \infty$ and $S : T \rightrightarrows Z$ be a measurable multivalued mapping with nonempty closed values. If $x \in \HpS,$ then
		$$
		\CT_{\HpS}(x)=\{v \in L^p(T,Z) : v(t) \in \CT_{S(t)}(x(t)) \hspace{0.1cm} \text{a.e.}\}.
		$$
	\end{maintheorem}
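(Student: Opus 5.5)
We prove the two inclusions separately. Throughout we use the sequential description of the Clarke tangent cone: $v \in \CT_C(x)$ iff for every sequence $x_k \to x$ with $x_k \in C$ and every $t_k \downarrow 0$ there are $v_k \to v$ with $x_k + t_k v_k \in C$. Equivalently, $v\in\CT_C(x)$ iff $\limsup_{y \to_C x,\, \tau\downarrow 0} d_C(y+\tau v)/\tau = 0$.

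\emph{Pointwise-to-global inclusion.} Let $v \in L^p(T,Z)$ with $v(t) \in \CT_{S(t)}(x(t))$ a.e. Take $x_k \in \HpS$ with $x_k \to x$ in $L^p$, and $\tau_k \downarrow 0$. Passing to subsequences (which suffices by a standard sub-subsequence argument), we may assume $x_k(t) \to x(t)$ a.e. We may also assume there is a dominating function $g \in L^p$ with $\norm{x_k(t)-x(t)} \le g(t)$.

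\begin{enumerate}[(i)]
\item Define $w_k(t) := \tau_k^{-1} d_{S(t)}(x_k(t)+\tau_k v(t))$. This is measurable, because $S$ is measurable with closed values and so $(t,z)\mapsto d_{S(t)}(z)$ is Carath\'eodory.
\item Pointwise tangency gives $w_k(t)\to 0$ a.e.
\item Since $x_k(t)\in S(t)$, we have $w_k(t) \le \norm{v(t)}$, and $\norm{v}^p$ is integrable. By dominated convergence, $\norm{w_k}_{L^p} \to 0$.
\item By measurable selection (Kuratowski--Ryll-Nardzewski applied to the closed-valued measurable map $t\mapsto S(t)\cap \clBB(x_k(t)+\tau_k v(t), \tau_k w_k(t)+\tau_k/k\cdot h(t))$, with $h>0$ a fixed function in $L^p$ available by $\sigma$-finiteness), pick $s_k(t)\in S(t)$ in that ball.
\item Set $v_k := (s_k - x_k)/\tau_k$. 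Then $v_k\in L^p$, $x_k+\tau_k v_k \in \HpS$, and $\norm{v_k - v}_{L^p}\le \norm{w_k}_{L^p} + \norm{h}_{L^p}/k \to 0$.
\end{enumerate}

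Hence $v\in \CT_{\HpS}(x)$. This step is routine.

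\emph{Global-to-pointwise inclusion.} This is the main obstacle. Let $v \in \CT_{\HpS}(x)$ and suppose $E:=\{t : v(t)\notin \CT_{S(t)}(x(t))\}$ has positive measure; we first check that $E$ is measurable. Failure of tangency at $t$ means there exist $\varepsilon>0$ and points $y\in S(t)$, $\tau>0$ arbitrarily close to $(x(t),0)$ with $d_{S(t)}(y+\tau v(t))>\varepsilon\tau$. Treiman's lemma (Clarke tangent cone as the liminf of Bouligand/contingent cones at nearby points) can be used to quantify this uniformly. Then:

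\begin{enumerate}[(i)]
\item Pass to $E_\varepsilon\subset E$ of positive finite measure on which a fixed $\varepsilon$ works and $\norm{v(t)}$ and $\norm{x(t)}$ are bounded.
\item For each $n$, use measurable selection (the relevant set of triples $(t,y,\tau)$ is measurable, so the projection theorem on a complete measure space applies) to choose $y_n(t)\in S(t)$ and $\tau_n(t)\in(0,1/n)$ with $\norm{y_n(t)-x(t)}<\tau_n(t)/n$ and $d_{S(t)}(y_n(t)+\tau_n(t)v(t))>\varepsilon \tau_n(t)$.
\end{enumerate}

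The difficulty is that $\tau_n(t)$ varies with $t$, while the global Clarke definition uses a single scalar step. To fix this, decompose $E_\varepsilon$ into countably many measurable pieces $E_{n,j}=\{t: \tau_n(t)\in(2^{-j-1},2^{-j}]\}$. Choose one piece $A_n$ with $\mu(A_n)\ge c_n\mu(E_\varepsilon)$ for suitable weights, or better, use the layer decomposition so that on $A_n$ the step is comparable to a fixed $\sigma_n$. Here Treiman's lemma again lets us rescale $\tau$ within a factor $2$ while preserving a gap $\varepsilon/4$, since failure persists on small intervals of step sizes.

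Then define the global base point $x_n := y_n$ on $A_n$ and $x_n := x$ elsewhere. We have $x_n\in\HpS$ and $x_n\to x$ in $L^p$, since the perturbation is bounded by $\sigma_n/n$ on a finite-measure set. Global tangency gives $v_n\to v$ in $L^p$ with $x_n+\sigma_n v_n\in\HpS$. On $A_n$, pointwise,
$$\norm{v_n(t)-v(t)}\ge \sigma_n^{-1}d_{S(t)}(y_n(t)+\sigma_n v(t))\ge \varepsilon/4.$$
This yields $\norm{v_n-v}_{L^p}^p\ge(\varepsilon/4)^p\mu(A_n)$.

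The contradiction therefore needs $\mu(A_n)$ bounded below. We would secure this by choosing, for each $n$, the dyadic scale $j$ capturing mass at least a summable-fraction, and, if the mass escapes to ever smaller scales, combining countably many scales simultaneously with different steps. The latter is not allowed for a single $\sigma$, so this is the delicate point. Our first attempt is to use Treiman's lemma to show that for a.e. $t\in E_\varepsilon$ failure occurs at \emph{all} sufficiently small step sizes for suitable nearby base points, thereby removing the scale mismatch.
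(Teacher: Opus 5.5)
Your pointwise-to-global half is correct, and it takes a different, more elementary route than the paper. The paper obtains this inclusion from its uniform-tangent-set theorem (Theorem~\ref{theorem:osnoven-rezultat-potochovo-kum-globalno}), which uses Castaing representations, Egorov-type sets $E_m$ on which pointwise uniform tangency holds with fixed thresholds, and a patching step controlled by an equi-integrable correction family (here $R=-\Sel_{p,r}(D)$). That machinery is built so it also covers products and the $L^\infty\times L^1$ case under (\textbf{V}). You instead use the sequential characterization of $\CT$, the bound $0\le w_k(t)\le\norm{v(t)}$ (from $x_k(t)\in S(t)$), and dominated convergence; this is exactly where $p<\infty$ enters. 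The dominating function $g$ for $x_k$ is not even needed.

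The global-to-pointwise half has a genuine gap. You stop exactly at the step that carries the proof: producing one scalar step at which failure occurs on a set whose measure is bounded below. Two ingredients of your sketch also fail as written.
First, step (ii) requires $\norm{y_n(t)-x(t)}<\tau_n(t)/n$, and such witnesses need not exist. They would force $d_{S(t)}(x(t)+\tau_n(t)v(t))>(\varepsilon-1/n)\tau_n(t)$, so $v(t)$ would not even be an adjacent tangent at $x(t)$. For a counterexample take $S(t)\equiv\{(a,b)\in\RR^2 : b\ge-|a|\}$, $x=0$, $v=(1,0)$. Then $v\notin\CT_S(0)=\{(a,b): b\ge|a|\}$, yet $d_S(y+\tau v)\le\norm{y}$ because $\tau v\in S$. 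The coupling is also unnecessary: $\norm{y_n-x}\le 1/n$ on a set of finite measure already gives $x_n\to x$.
Second, ``rescaling within a factor $2$'' does not follow from one failing step. The estimate $d(y+\tau' v)\ge d(y+\tau v)-(\tau-\tau')\norm{v}$ gives nothing unless $\norm{v}$ is small compared with $\varepsilon$.

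The missing idea is Treiman's lemma in the form quoted in the paper. The liminf-of-contingent-cones formula you cite is a different statement, which you would have to justify in a general Banach space $Z$. The right form: for $t\in E$ there is $\varepsilon_t>0$ such that for every $\delta>0$ there exist $z\in S(t)$ with $\norm{z-x(t)}\le\delta$ and $\lambda=\lambda(t,\delta)>0$ giving failure with gap $\varepsilon_t$ at every step in $(0,\lambda)$. So you fix the step first and choose the base point afterwards, depending on it. A small step then works simultaneously for all $t$ whose threshold lies above it, so the relevant sets increase as the step decreases, no mass can escape, and no dyadic layering is needed.

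The paper makes this measurable with a Castaing representation $\{s_j\}$. $B_m$ is the set of $t\in E$ such that for every $N$ there is $L$ with the following property: for all $\ell\ge L$, some $s_j(t)$ within $1/(2N)$ of $x(t)$ satisfies $d_{S(t)}(s_j(t)+\ell^{-1}v(t))>2/(m\ell)$. Letting $j$ depend on $\ell$ keeps all quantifiers countable. The same formula, without intersecting with $E$, also gives the measurability of $E$ and of your $E_\varepsilon$, which you only assert. The paper then fixes $m_0$, the tolerance $\rho$, then $\delta,\Lambda$ from global tangency, then $N_0$ from $\delta$. The sets $R_\ell$ increase to $B_{m_0}$, so some $\ell_0$ has $1/\ell_0<\Lambda$ and $\mu(R_{\ell_0})>\mu(B_{m_0})/2$. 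Only then is the base point patched from the $s_j$ over a disjointification of the $F_j$ (Theorem~\ref{theorem:productMainTheorem-global-to-pointwise-p-r}).

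Your sequential framework can be completed the same way. For each $n$, choose $L_n\ge n$ so that the analogous set with base points $1/n$-close to $x$ (and a uniform gap $c>0$) has measure at least $\mu(E_\varepsilon)/2$, and take $\sigma_n=1/L_n$. You then get $\norm{v_n-v}_{L^p}^p\ge c^p\mu(E_\varepsilon)/2$ for all $n$, contradicting $v_n\to v$.
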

	
	The space $L^\infty(T,Y) \times L^1(T,Z)$ is of particular interest. It arises naturally when one studies tangent objects associated with trajectory-velocity pairs, in particular in the linearization of differential inclusions,  see e.g. \cite{BivasKrastanovRibarska2023}.
	
	A set $K \subset L^1(T,Z)$ is equi-integrable if for any decreasing sequence of measurable sets with $A_n \searrow \emptyset$ a.e. it holds 
	$$\sup_{k \in K} \int_{A_n} \norm{k(t)} \dd \mu(t)  \longrightarrow 0 \hspace{0.2cm} \text{as $n \to \infty.$}$$
	Every ralatively weakly compact set in $L^1(T,Z)$ is equi-integrable, see e.g. \cite[p.~104, Theorem~4]{DiestelUhl}
	
	We introduce the following variational condition (\textbf{V}):
	For every $M>0,$ there exists $\delta, \lambda > 0$ and an equi-integrable set $K \subset L^1(T,Z)$ such that, for every $\eta \in [0,\lambda]$ it holds
	$$
	\Sel_{\infty,1}(S) \cap \left( (x,y) + \delta_M \clBB_{L^\infty \times L^1} \right) + \eta \left( \mathcal{U}_M \times \{0\} \right) \subset \Sel_{\infty,1}(S) - \eta ( \{0\} \times K),
	$$
	where $\mathcal{U}_M:=\left\{ u \in M.\clBB_{L^\infty} : \exists v \in L^1(T,Z), \hspace{0.1cm} (u(t),v(t)) \in \CT_{S(t)}(x(t),y(t)) \hspace{0.1cm} \text{a.e.}  \right\}.$
	
	 We refer the reader to \cite{KrastanovRibarska2021PAFA}, where correction conditions, involving several variables, are also studied and applied.
	
	\begin{proposition}
		Let $(T,\Sigma,\mu)$ be a complete, $\sigma$-finite measure space, let $Y,Z$ be separable Banach spaces and $S: T \rightrightarrows Y \times Z$ be a measurable with nonempty closed values. If $\Sel_{\infty,1}(S)$ satisfies the variational condition (\textbf{V}) at $(x,y) \in \Sel_{\infty,1}(S),$ then
		$$
		\CT_{\Sel_{\infty,1}}(x,y) = \{ (u,v) \in L^\infty(T,Y) \times L^1(T,Z) : (u(t),v(t)) \in \CT_{S(t)}(x(t),y(t)) \hspace{0.1cm} \text{a.e.}\}.
		$$
	\end{proposition}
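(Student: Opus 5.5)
The plan is to prove the two inclusions separately. Write $\mathcal{C}$ for the right-hand side, i.e. the set of $L^\infty\times L^1$ selections of $t\mapsto \CT_{S(t)}(x(t),y(t))$.

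\emph{Global-to-pointwise inclusion $\CT_{\Sel_{\infty,1}(S)}(x,y)\subset\mathcal{C}$.} This inclusion should hold unconditionally; the paper announces it for all exponents. I would argue by contradiction. Suppose $(u,v)\in\CT_{\Sel_{\infty,1}(S)}(x,y)$ but $(u(t),v(t))\notin\CT_{S(t)}(x(t),y(t))$ on a set of positive measure.

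1. Use Treiman's lemma (the Clarke cone is the limit inferior of Bouligand cones at nearby points) to quantify the failure. For $t$ in a set $A$ of positive, finite measure there are $\varepsilon>0$ and points $(x_t,y_t)\in S(t)$ with $\norm{(x_t,y_t)-(x(t),y(t))}$ arbitrarily small. At these points $\operatorname{dist}\big((x_t,y_t)+s(u(t),v(t)),S(t)\big)\geq \varepsilon s$ for all small $s$.
2. Choose these points measurably, via measurable selection for the measurable multifunction $S$ and countable exhaustion of $A$.
3. Replace $(x,y)$ by $(x,y)$ on $T\setminus A'$ and by $(x_t,y_t)$ on a subset $A'\subset A$ of small measure. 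Smallness is needed because only the $L^1$ component must be close in norm, while the $L^\infty$ component is close pointwise. This gives a nearby base point in $\Sel_{\infty,1}(S)$.
4. At this base point the direction $(u,v)$ has global distance defect at least of order $\varepsilon s\mu(A')$ in the $L^1$ part (or $\varepsilon s$ in the $L^\infty$ part). This contradicts the uniform condition in the Clarke tangency definition.

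The delicate bookkeeping is making the radius in $s$ uniform over $t$. I would handle this by decomposing $A$ into countably many measurable pieces on which the radius is bounded below, and working on one piece of positive measure.

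\emph{Pointwise-to-global inclusion $\mathcal{C}\subset\CT_{\Sel_{\infty,1}(S)}(x,y)$.} Fix $(u,v)\in\mathcal{C}$, sequences $(x_n,y_n)\to(x,y)$ in $\Sel_{\infty,1}(S)$ and $s_n\downarrow0$. We must find $(u_n,v_n)\to(u,v)$ in $L^\infty\times L^1$ with $(x_n,y_n)+s_n(u_n,v_n)\in\Sel_{\infty,1}(S)$. Condition (\textbf{V}) is tailored to exactly this.

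1. Take $M\ge\norm{u}_\infty$, so $u\in\mathcal{U}_M$.
2. For large $n$ we have $(x_n,y_n)\in (x,y)+\delta_M\clBB$ and $s_n\le\lambda$. Condition (\textbf{V}) then yields $k_n\in K$ with
$$(x_n+s_nu,\;y_n+s_nk_n)\in\Sel_{\infty,1}(S).$$
3. The first component is then handled exactly, with $u_n=u$. The second component is corrected by $k_n$, which is equi-integrable but not yet close to $v$.

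The remaining step is to improve the second component to $v_n\to v$ in $L^1$, and this is the main obstacle. My approach would be a pointwise, $Z$-directional argument. For fixed first coordinate, Clarke tangency of $(u(t),v(t))$ at $(x(t),y(t))$ says that for $(x',y')\in S(t)$ near $(x(t),y(t))$ and small $s$,
$$\operatorname{dist}\big((x'+su(t),\,y'+sv(t)),S(t)\big)=o(s).$$
On a set $E_n$ where the convergence is uniform, I would measurably select points of $S(t)$ within $s_n\varepsilon_n$ of $(x_n+s_nu,\,y_n+s_nv)$. Since the $Y$-component must stay exact in $L^\infty$, I would choose the points $s_n\varepsilon_n$-close in the $Y$-norm, accepting an $L^\infty$ error $\varepsilon_n\to0$, which is allowed.

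Off $E_n$ I would use the correction from (\textbf{V}), setting $v_n=k_n$ there. Then
$$\norm{v_n-v}_{L^1}\le \varepsilon_n\mu(E_n\cap B)+\int_{T\setminus E_n}\big(\norm{k_n}+\norm{v}\big)\,\dd\mu,$$
where $B$ is a fixed set of finite measure handled by $\sigma$-finiteness. By Egorov-type uniformity, $E_n$ can be chosen with $T\setminus E_n$ shrinking to a null set. Equi-integrability of $K$ and integrability of $v$ then make the last integral tend to $0$. The decomposability of $\Sel_{\infty,1}(S)$ guarantees that gluing the two pieces stays in the set.

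The crux is the compatibility of the gluing: on $E_n$ the first component equals $x_n+s_nu$ up to $s_n\varepsilon_n$, and off $E_n$ it equals $x_n+s_nu$ exactly. So $u_n\to u$ in $L^\infty$, and the conclusion follows.
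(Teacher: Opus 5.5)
Your pointwise-to-global half is sound and follows the paper. Condition (\textbf{V}) makes $\Sel_{\infty,1}(S)$ correctable along $\{(u,v)\}$ by the equi-integrable family $\{0\}\times(K-v)$. Your gluing (pointwise selections on a good set, the correction $k_n$ off it, equi-integrability to control the off-set error) is the proof of Theorem~\ref{theorem:osnoven-rezultat-potochovo-kum-globalno} with $D(t)=\{(u(t),v(t))\}$, written with sequences. Make sure the good set also discards the points where $\norm{y_n(t)-y(t)}_Z$ is not small. Since $y_n\to y$ only in $L^1$, that set is small in measure (Chebyshev) but not monotone, so you need the small-measure form of equi-integrability rather than ``shrinking to a null set''.

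The global-to-pointwise half, as written, has the quantifiers in the wrong order. In the Clarke definition the tolerance $\rho$ is fixed first, and only then are $\delta,\Lambda$ given. So you need, for every $\delta$, a base point within $\delta$ whose defect exceeds $\rho s$. Your defect is only of order $\varepsilon s\min\{1,\mu(A')\}$, since the pointwise gap can sit entirely in the $Z$-component (cf.\ Lemma~\ref{lemma:sum-of-measurable-functions}). If $A'$ must have small measure to make the new base point $\delta$-close, the defect drops below $\rho s$ as $\delta\to0$ and nothing is contradicted.

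The same objection applies to working on ``one piece of positive measure'' on which the radius in $s$ is bounded below. Treiman's lemma gives a radius that depends on how close the chosen point is, hence on $\delta$. So that piece must have measure bounded below independently of $\delta$.

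Smallness of $\mu(A')$ is unnecessary anyway. Choosing the points within $1/N$ of $(x(t),y(t))$ for every $t$ already gives $L^\infty\times L^1$ distance at most $(1+\mu(A))/N$.

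The fix is the paper's bookkeeping:
\begin{enumerate}[(1)]
\item Fix $m_0$ and the set $B_{m_0}$ (Treiman constant of order $1/m_0$), and define $\rho$ from $m_0$ and $\mu(B_{m_0})$ before $\delta$ is known.
\item Once $\delta$ is given, choose $N_0$.
\item For that $N_0$ the sets $R_\ell$ (radius at least $1/\ell$) increase to $B_{m_0}$, so pick $\ell_0$ with $1/\ell_0<\Lambda$ and $\mu(R_{\ell_0})>\mu(B_{m_0})/2$.
\item Test the defect only at the single step $s=1/\ell_0$, using Castaing points; this also settles measurability.
\end{enumerate}
With this repair your outline coincides with the paper's proof.
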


	(\textbf{V}) is implied by the following simpler property (\textbf{Vstrong}) of $\Sel_{\infty,1}(S)$ at $(x,y).$ There exists $\varepsilon, \delta,\lambda>0$  and an equi-integrable $K \subset L^1(T,Z)$ such that for every $\eta \in [0,\lambda]$
	$$
	\Sel_{\infty,1}(S) \cap \left( (x,y) + \delta \clBB_{L^\infty \times L^1}\right) + \eta (\varepsilon \clBB_{L^\infty} \times \{0\})) \subset \Sel_{\infty,1}(S) - \eta ( \{0\} \times K).
	$$
	
	(\textbf{Vstrong}) is satisfied when $S(t)=\Gr F(t,\cdot)$ where $F : T \times Y \rightrightarrows Z$ is measurable in $t$ and $F(t,y) \subset F(t,z) + \ell(t) \norm{y-z}\clBB_{Z} $ for some $\ell \in L^1(T).$
	
	We note that in the latter case, when $Y,Z$ finite dimensional and $T$ compact interval, the equality of the Clarke tangent cones was proven by Mikhail Minkov in his Master's thesis \cite{MikailMinkov}.
	
	We prove the two inclusions separately. We begin with a construction of uniform tangent sets to decomposable sets in $L^p(T,Z).$ 
	
	Since many applications involve more than one Lebesgue-space variable, we study the interaction of two such variables and formulate our results for the full range of exponents, including infinity. By taking one of the variables to be identically zero, we recover the corresponding one-variable results.

	\section{Preliminaries and notation}
	Throughout, $Z$ denotes a separable Banach space; $\clBB_{Z}$ and $\BB_{Z}$ denote its closed and open unit balls, respectively. The symbol $d_C(x)$ for the distance from $x$ to $C.$ $\widehat{\partial}f$ denotes the Clarke subdifferential \cite{ClarkeOld} of a locally Lipschitz real-valued function $f.$
	
	\begin{definition}
		Let $X$ be a Banach space,let $C \subset X$ be closed, and let $x \in C.$ We say that $v$ is Clarke tangent to $C$ at $x$ if, for every $\varepsilon>0,$ there exist $\delta>0$ and $\lambda>0$ such that, for every $y \in C \cap (x + \delta \clBB_{X}),$
		$$
		C \cap [ y + \eta (v + \varepsilon \clBB_{X})] \neq \emptyset
		$$
		for all $\eta \in [0,\lambda].$
	\end{definition}
	We denote the set of all Clarke tangent vectors by $\CT_{C}(x).$ The corresponding normal cone is defined by polarity, $\CN_{C}(x):=\left( \CT_{C}(x) \right)^{0}.$
	
	\begin{definition}
		Let $X$ be a Banach space, let $C \subset X$ be closed, and let $x \in C.$ We say that a subset $D \subset X$ is a uniform tangent set to $C$ at $x,$ if for every $\varepsilon>0,$ there exist $\delta>0$ and $\lambda>0$ such that, for every $v \in D$ and every $y \in C \cap (x + \delta \clBB_{X}),$
		$$
		C \cap [y + \eta( v + \varepsilon \clBB_{X})] \neq \emptyset
		$$
		for all $\eta \in[0,\lambda].$
	\end{definition}
	
	\begin{lemma}[Treiman, Lemma 2.1 \cite{TreimanLemma}]
		Let $X$ be a Banach space, $C$ its closed subset and $x_0$ belonging to $C.$ Then $y \notin \CT_{C}(x_0)$ if and only if there exists $\varepsilon_0>0$ such that for every $\delta>0$ there exists $\lambda>0, x' \in C \cap ( x_0 + \delta \clBB_{X})$ with
		$$
		C \cap [ x' + (0,\lambda)(y + \varepsilon_{0} \clBB_{X}) ] = \emptyset.
		$$
	\end{lemma}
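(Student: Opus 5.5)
The equivalence has one easy direction and one substantive direction. For the easy one, I will only need the definition of the Clarke tangent cone. For the substantive one, I will apply Ekeland's variational principle on the complete metric space $C\times[0,\eta]$. The point is to upgrade the ``one bad $\eta$'' that negation of tangency provides into a whole bad interval $(0,\lambda)$.

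\emph{($\Leftarrow$).} Suppose $\varepsilon_0$ is as in the statement, and take $\varepsilon=\varepsilon_0$ in the definition of Clarke tangency. Given any candidate $\delta,\lambda'>0$, the hypothesis supplies $x'\in C\cap(x_0+\delta\clBB_X)$ and $\lambda>0$ such that $C\cap[x'+\eta(y+\varepsilon_0\clBB_X)]=\emptyset$ for every $\eta\in(0,\lambda)$. Any $\eta\in(0,\min(\lambda,\lambda'))$ then violates the definition, so $y\notin\CT_C(x_0)$.

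\emph{($\Rightarrow$).} First I pass to the distance-function form. If $y\notin\CT_C(x_0)$, negating the definition gives $\alpha>0$ together with $x_n\in C$, $x_n\to x_0$, and $\eta_n\downarrow 0$ such that $d_C(x_n+\eta_n y)>\alpha\eta_n$. Set $\kappa:=\alpha/2$. For fixed $n$, consider the complete metric space $C\times[0,\eta_n]$ (with the sum metric) and the function
$$F(c,s):=\norm{c-(x_n+sy)}-\kappa s .$$
It is continuous and bounded below by $-\kappa\eta_n$, and $F(x_n,0)=0\le \inf F+\kappa\eta_n$. Fix a small $\theta>0$. Ekeland's principle gives $(\bar c,\bar s)$ with three properties:
\begin{itemize}
\item[(a)] $F(\bar c,\bar s)\le 0$;
\item[(b)] $\norm{\bar c-x_n}+\bar s\le \kappa\eta_n/\theta$;
\item[(c)] $F(c,s)\ge F(\bar c,\bar s)-\theta(\norm{c-\bar c}+|s-\bar s|)$ for all $(c,s)$.
\end{itemize}
Since $F(c,\eta_n)\ge d_C(x_n+\eta_n y)-\kappa\eta_n>0$ for every $c\in C$, property (a) forces $\bar s<\eta_n$. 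We take $x':=\bar c$ and $\lambda:=\eta_n-\bar s$. By (b), $\norm{x'-x_0}\le\norm{x_n-x_0}+\kappa\eta_n/\theta\to0$, so for large $n$ the point $x'$ lies in $x_0+\delta\clBB_X$ for any prescribed $\delta$.

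Now suppose, for contradiction, that $c:=\bar c+t(y+e)\in C$ for some $t\in(0,\lambda)$ and $\norm{e}\le\varepsilon_0$. Test (c) at $(c,\bar s+t)$, which is admissible since $\bar s+t<\eta_n$. The triangle inequality gives
$$F(c,\bar s+t)\le F(\bar c,\bar s)+t(\varepsilon_0-\kappa).$$
On the other hand, (c) gives
$$F(c,\bar s+t)\ge F(\bar c,\bar s)-\theta t(\norm{y}+\varepsilon_0+1).$$
Choose $\varepsilon_0:=\kappa/4$ and $\theta$ small enough that $\theta(\norm y+\varepsilon_0+1)<3\kappa/4$. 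The two bounds are then incompatible, so $C\cap[x'+(0,\lambda)(y+\varepsilon_0\clBB_X)]=\emptyset$.

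The main obstacle is exactly this conversion from a single bad step length to an entire bad interval. The naive ``last time the distance is small'' argument fails because projecting $x_n+sy$ back to $C$ incurs an additive error that cannot be controlled as $t\downarrow0$. Ekeland's principle fixes this by letting the base point $\bar c\in C$ and the parameter $\bar s$ be chosen simultaneously. The remaining care is bookkeeping: $\varepsilon_0$ and $\theta$ must depend only on $\alpha$ and $\norm y$ (not on $n$ or $\delta$), and the localization in (b) must be checked so that $x'$ stays close to $x_0$ for every prescribed $\delta$.
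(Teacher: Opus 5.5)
Your proof is correct. The paper gives no proof of this lemma. It quotes it from Treiman and uses it as a black box in the proof of Theorem~\ref{theorem:productMainTheorem-global-to-pointwise-p-r}, in the form $d_{S(t)}(z+\eta(u,v))\ge\varepsilon_t\eta$ for $\eta\in[0,\lambda]$. So there is no in-paper route to compare against, and your argument is a self-contained justification of an imported result.

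The ($\Leftarrow$) direction is the correct unwinding of the definition with $\varepsilon=\varepsilon_0$ and $\eta\in(0,\min\{\lambda,\lambda'\})$. In ($\Rightarrow$) each step of the Ekeland argument holds:
\begin{itemize}
\item $F(x_n,0)=0\le\inf F+\kappa\eta_n$, and the slope $\theta$ corresponds to the Ekeland radius $\kappa\eta_n/\theta$.
\item The bound $F(\cdot,\eta_n)\ge(\alpha-\kappa)\eta_n>0$ forces $\bar s<\eta_n$. This only needs $d_C(x_n+\eta_n y)\ge\alpha\eta_n$ with $\kappa<\alpha$, so it does not matter whether negating the definition gives a strict or non-strict inequality.
\item The test point $(\bar c+t(y+e),\bar s+t)$ is admissible.
\item The two bounds give $\kappa-\varepsilon_0\le\theta(\norm{y}+\varepsilon_0+1)$, which your choices of $\varepsilon_0$ and $\theta$ exclude.
\end{itemize}
Because $\theta$ and $\varepsilon_0$ depend only on $\alpha$ and $\norm{y}$, letting $n\to\infty$ places $x'$ in any prescribed ball.

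Your explanation of why Ekeland is needed is also accurate. If $C$ is proximinal (e.g.\ $\dim X<\infty$), fix $\beta<\alpha$, take $s^*$ to be the largest $s\in[0,\eta_n]$ with $d_C(x_n+sy)\le\beta s$, and let $x'$ be an exact nearest point to $x_n+s^*y$. This already gives $d_C(x'+ty)>\beta t$ on $(0,\eta_n-s^*)$. In a general Banach space, Ekeland's principle replaces the missing nearest point by choosing the base point and the parameter together.

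Your conclusion also gives exactly the distance form the paper uses. For $t\in(0,\lambda)$, every $c\in C$ satisfies $\norm{c-\bar c-ty}>\varepsilon_0t$, and continuity of $d_C$ extends $d_C(x'+ty)\ge\varepsilon_0t$ to the closed interval $[0,\lambda]$. Since $\lambda\le\eta_n\to0$, you may additionally require $\lambda<\delta$ if desired.
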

	
	\begin{definition}
		Let $X$ be a Banach space, let $C \subset X$ be closed, and let $x \in C.$ We say that $v$ is a Bouligand tangent to $C$ at $x,$ if there exists a sequence of positive reals $\tau_k \searrow 0$ and a sequence $v_k \to v$ such that $x + \tau_k v_k \in C$ for every $k \in \NN.$ 
	\end{definition}
	The set of all Bouligand tangent vectors to $C$ at $x$ is denoted by $\BT_{C}(x).$
	
	\begin{definition}[\cite{CastaingValadier1977}]
		Let $(T,\Sigma,\mu)$ be a complete $\sigma$-finite measure space, let $Z$ be a separable Banach space and $S : T \rightrightarrows Z$ be a multifunction with nonempty closed values. We say that $S$ is measurable if one, and hence all,of the following equivalent conditions hold:
		\begin{enumerate}[(a)]
			\item for every open $U \subset Z$ it's big  preimage $\{t \in T : U \cap S(t) \neq \emptyset\}$ is $\Sigma$-measurable;
			
			\item there exists a sequence $\{s_n\}_{n \in \NN}$ of $\Sigma$-measurable functions $s_n : T \to Z$ such that $s_n(t) \in S(t)$ and $S(t)= \cl{\{s_n(t)\}_{n \in \NN}}$ for a.e. $t \in T;$ such a sequence is called a Castaing representation of $S;$
			
			\item the graph $\Gr S =\{(t,x) \in T \times Z  : x \in S(t)\}$ is $\Sigma \otimes \mathcal{B}(Z)$-measurable, where $\mathcal{B}(Z)$ denotes the Borel $\sigma$-algebra of $Z;$
		\end{enumerate}
	\end{definition}
	
	A function $z :T \to Z$ is measurable if, for every open set $U \subset Z,$ its preimage $z^{-1}[U]:=\{t \in T : z(t) \in U\}$ is $\Sigma$-measurable. It is said that $z$ is Bochner measurable if there exists a sequence of simple functions whose pointwise limit coincides with $z$ for almost all values of $t.$ Whenever $Z$ is separable, these two notions are equivalent. We denote by $L^p(T,Z)$ the (Lebesgue--Bochner) space, consisting of all classes of measurable functions $z:T \to Z$ such that 
	$$\norm{z}_{L^p}:=\left( \int_T \norm{z(t)}^p \dd \mu(t) \right)^{1/p} < \infty.$$
	For a detailed exposition and study of Bochner spaces see \cite{CembranosMendoza1997}. 
	
	\begin{lemma}
		Let $(T,\Sigma,\mu)$ be a measure space, let $Z$ be a separable Banach space, and let $S : T \rightrightarrows Z$ be a measurable multifunction with nonempty closed values. If $x : T \to Z$ is measurable,  then the multifunctions
		\begin{align*}
			t \mapsto \CT_{S(t)}(x(t)) \hspace{0.5cm} \text{and} \hspace{0.5cm} t \mapsto \BT_{S(t)}(x(t))
		\end{align*}
	
	\end{lemma}
	
	\begin{lemma}
		Let $(T,\Sigma,\mu)$ be a $\sigma$-finite measure space, $Z$ be a separable Banach space, and $1 \leq p \leq \infty.$
		The following are equivalent for $S : T \rightrightarrows Z$ measurable multifunction with nonempty closed values:
		\begin{enumerate}[(a)]
			\item $\Hp(S) \neq \emptyset;$
			
			\item the function $t \mapsto \operatorname{dist}(0, S(t))$ belongs to $L^p(T,[0,\infty));$
			
			\item There exists $r(\cdot) \in L^p(T,[0,\infty))$ such that $S(t) \cap r(t) \clBB_{Z} \neq \emptyset$ a.e.
		\end{enumerate}
	\end{lemma}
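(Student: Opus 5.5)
The statement is the standard lemma on nonemptiness of $\Hp(S)$. I plan the cycle (a) $\Rightarrow$ (b) $\Rightarrow$ (c) $\Rightarrow$ (a), with measurability of the distance function and a measurable selection as the two tools.

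\emph{Measurability of the distance.} Fix a Castaing representation $\{s_n\}$ of $S$ from Definition (b). Then
$$d(t):=\operatorname{dist}(0,S(t))=\inf_n \norm{s_n(t)} \quad \text{a.e.},$$
which is a countable infimum of measurable functions and hence measurable.

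\emph{(a) $\Rightarrow$ (b).} Take $z\in\Hp(S)$. Since $z(t)\in S(t)$ a.e., we have $0\le d(t)\le \norm{z(t)}$ a.e. As $\norm{z(\cdot)}\in L^p$, this gives $d\in L^p$; the case $p=\infty$ is included.

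\emph{(b) $\Rightarrow$ (c).} Set $r:=d+h$, where $h\in L^p(T)$ is strictly positive. Such an $h$ exists by $\sigma$-finiteness: write $T=\bigcup_k T_k$ with $T_k$ disjoint and $\mu(T_k)<\infty$, and put $h=\sum_k c_k\ind_{T_k}$ with $c_k>0$ chosen small enough that $\sum_k c_k^p\mu(T_k)<\infty$. For $p=\infty$ one simply takes $h\equiv 1$. Because $r(t)>d(t)$, the closed set $S(t)$ contains points of norm strictly less than $r(t)$, so $S(t)\cap r(t)\clBB_Z\neq\emptyset$ a.e.

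\emph{(c) $\Rightarrow$ (a).} This is the step that needs care. Define $G(t):=S(t)\cap r(t)\clBB_Z$, which is nonempty and closed a.e., and redefine $G$ on a null set, which is harmless by completeness. I will show that $G$ is measurable using criterion (c) of the measurability definition. The graph of $G$ is
$$\Gr G=\Gr S\cap\{(t,z):\norm{z}\le r(t)\},$$
and the second set is $\Sigma\otimes\mathcal B(Z)$-measurable because $(t,z)\mapsto \norm{z}-r(t)$ is jointly measurable. Hence $G$ is measurable, and since $\mu$ is complete and $Z$ is Polish, the Castaing representation (b) (equivalently, Kuratowski--Ryll-Nardzewski or Aumann selection) provides a measurable selection $z(t)\in G(t)$.

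Alternatively, without invoking the graph criterion, one can build the selection directly from the $s_n$. Let $n(t)$ be the least $n$ with $\norm{s_n(t)}<r(t)+1/k$. This yields measurable selections $z_k$ with $\norm{z_k}\le r+1/k$, but passing to a closed-ball selection then requires a limiting argument. For that reason I will rely on the graph criterion.

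Finally, $\norm{z(t)}\le r(t)$ with $r\in L^p$, so $z\in L^p(T,Z)$ and $z(t)\in S(t)$ a.e., that is, $z\in\Hp(S)$.
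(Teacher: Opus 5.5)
The paper recalls this lemma in the preliminaries without giving a proof, so there is no in-paper argument to compare with. Your cycle (a)$\Rightarrow$(b)$\Rightarrow$(c)$\Rightarrow$(a) is correct. Adding the strictly positive $h\in L^p(T)$ in (b)$\Rightarrow$(c) is genuinely necessary, since in infinite dimensions $\operatorname{dist}(0,S(t))$ need not be attained. Your construction of $h$ from a $\sigma$-finite partition is fine.

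In (c)$\Rightarrow$(a), the graph-criterion route uses completeness of $\mu$ to get a selection of the merely graph-measurable $G$. The lemma as stated does not list completeness, but it is implicit in the paper's definition of a measurable multifunction, so this is acceptable. It can, however, be avoided entirely.

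The patching route you set aside needs no limiting argument. For (a) you never need a selection of the closed ball $r(t)\clBB_Z$, only an $L^p$ majorant. The only defect of $r+1/k$ is that constants are not in $L^p$ when $\mu(T)=\infty$, so use the $h$ you already built instead:
\begin{itemize}
\item The sets $A_n:=\{t:\norm{s_n(t)}<r(t)+h(t)\}$ are measurable.
\item By (c) and the density of $\{s_n(t)\}$ in $S(t)$, they cover $T$ up to a null set.
\item Put $\widehat{A}_n:=A_n\setminus\bigcup_{i<n}A_i$ and $z:=\sum_n\ind_{\widehat{A}_n}s_n$, with $z:=s_1$ off $\bigcup_n A_n$.
\end{itemize}
Then $z$ is measurable, $z(t)\in S(t)$ and $\norm{z(t)}\le r(t)+h(t)$ a.e., so $z\in\Hp(S)$.

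This is the same disjointify-and-patch device the paper uses to build $(\widehat{x},\widehat{y})$ in the proof of Theorem~\ref{theorem:productMainTheorem-global-to-pointwise-p-r}. It needs only a Castaing representation of $S$, with no selection theorem for $G$ and no completeness. Equivalently, you can close the cycle as (c)$\Rightarrow$(b), which is trivial since $\operatorname{dist}(0,S(t))\le r(t)$, and (b)$\Rightarrow$(a) by this patching with $d+h$ in place of $r+h$.
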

	
	\begin{definition}
	We say that a set $M \subset L^p(T,Z)$ is decomposable if for every $x,y \in M$ and every $A \in \Sigma$ it holds $\ind_{A} x + \ind_{T \setminus A} y \in M.$ Here $\ind_{A}$ stands for the indicator function of a measurable set $A.$
	\end{definition}
	
	Decomposable sets were studied by Rockafellar \cite{Rockafellar1968IntegralsConvex}, Hiai and Umegaki \cite{HiaiUmegaki1977} and others.
	
	The following characterization of decomposable sets allows one to study them through their $\Hp$-representation.
	
	\begin{lemma}[Theorem 3.1. in \cite{HiaiUmegaki1977} and Theorem 6.4.6 in \cite{KyritsiYiallourouPapageorgiou2009}]
		Let $(T,\Sigma, \mu)$ be a $\sigma$-finite measure space, let $Z$ be a separable Banach space, and $1 \leq p < \infty.$ A closed nonempty set $M \subset L^p(T,Z)$ is decomposable if and only if $M=\HpS$ for some measurable multifunction $S  : T \rightrightarrows Z$ with nonempty closed values.
	\end{lemma}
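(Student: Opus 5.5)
The direction ``$\Leftarrow$'' is elementary, so I would treat it first. Decomposability of $\HpS$ is immediate: if $x(t),y(t)\in S(t)$ a.e., then $\ind_A x+\ind_{T\setminus A}y$ takes values in $S(t)$ a.e. It lies in $L^p$ because $\norm{\ind_A x+\ind_{T\setminus A}y}\le\norm{x}+\norm{y}$ pointwise. Closedness comes from the fact that $L^p$-convergence $x_k\to x$ gives a subsequence converging a.e. Since each $S(t)$ is closed, $x(t)\in S(t)$ a.e. Nonemptiness is part of the hypothesis.

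For ``$\Rightarrow$'' I would build $S$ through a Castaing representation, using condition (b) in the definition of measurability. Fix a countable dense set $\{z_k\}\subset Z$. Replace $\mu$ by an equivalent finite measure $\nu=w\,\mu$ with $w>0$, $w\in L^1$; this is possible by $\sigma$-finiteness. For each pair $(k,r)$ with $r\in\QQ_{>0}$, consider the family of sets
$$\mathcal F_{k,r}:=\bigl\{\{t:\norm{x(t)-z_k}<r\}: x\in M\bigr\}.$$
I take its essential union, realized by countably many members. Concretely, I maximize $\nu$ of countable unions from $\mathcal F_{k,r}$. The supremum is attained by a countable subfamily, because $\nu$ is finite and a countable union of countable subfamilies is countable. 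Maximality then shows that every member of $\mathcal F_{k,r}$ is contained in this union up to a null set.

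Collecting all the chosen $x$'s over all $(k,r)$ gives a sequence $(x_n)\subset M$. I set $S(t):=\cl{\{x_n(t)\}_n}$, which is measurable with nonempty closed values. To see $M\subset \HpS$, take $x\in M$. For a.e. $t$ the following holds for every $(k,r)$: if $\norm{x(t)-z_k}<r$, then $\norm{x_n(t)-z_k}<r$ for some $n$. Given $r$, choose $z_k$ with $\norm{x(t)-z_k}<r$; this yields $d_{S(t)}(x(t))<2r$. Since $r$ is arbitrary, $x(t)\in S(t)$ a.e.

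To see $\HpS\subset M$, take $x\in\HpS$, $\varepsilon>0$, and a fixed $h\in L^p$ with $h>0$ a.e. Set
$$A_n:=\{t:\norm{x(t)-x_n(t)}<\varepsilon h(t)\}\setminus\bigcup_{m<n}A_m .$$
These sets are measurable, disjoint, and cover $T$ a.e., since $x(t)$ lies in the closure of $\{x_n(t)\}$. Define the truncations
$$u_N:=\sum_{n\le N}\ind_{A_n}x_n+\ind_{T\setminus\bigcup_{n\le N}A_n}x_1\in M;$$
membership follows by finitely many applications of decomposability. Let $u:=\sum_n\ind_{A_n}x_n$. Pointwise we have $\norm{u_N-u}\le(\norm{x_1-x}+\varepsilon h)\ind_{\bigcup_{n>N}A_n}$, so dominated convergence gives $u_N\to u$ in $L^p$. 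Hence $u\in M$ by closedness, and $\norm{u-x}_{L^p}\le\varepsilon\norm{h}_{L^p}$. Letting $\varepsilon\to0$ and using closedness again gives $x\in M$.

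The main obstacle is the countable extraction of the essential union. Without it, $S$ could not be defined from countably many elements, because $L^p(T,Z)$ need not be separable. That step is exactly where $\sigma$-finiteness, via the finite measure $\nu$, is used.
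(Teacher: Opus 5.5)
Your proof is correct, but it takes a different route from the one the paper relies on. The paper gives no proof and simply imports the result from Hiai--Umegaki.

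The argument in the cited sources chooses the countable family variationally. For each $z_k$ one takes a minimizing sequence $(x_{k,j})_j\subset M$ for an integral of the truncated distance $\min\{\norm{x(t)-z_k},1\}$ with respect to a finite equivalent measure, and sets $S(t):=\cl{\{x_{k,j}(t)\}}$. One then proves $M\subset\HpS$ by contradiction: if some $x\in M$ leaves $S(t)$ on a set $B$ of positive measure, then $\ind_B x+\ind_{T\setminus B}x_{k,j}\in M$ undercuts the infimum by a fixed amount.

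You replace this by a measure-exhaustion (essential union) of the sublevel sets $\{\norm{x-z_k}<r\}$ over the double index $(k,r)$. As a consequence, your inclusion $M\subset\HpS$ uses neither decomposability nor closedness; it holds for an arbitrary family $M\subset L^p(T,Z)$. Decomposability and closedness then enter only in the reverse inclusion, where your patching-plus-dominated-convergence argument is essentially the standard approximation lemma used there.

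The variational route is slightly more economical: one scalar infimum per $z_k$ and no separate exhaustion lemma. The price is that it uses decomposability in both inclusions, whereas yours isolates exactly where that hypothesis matters.

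One small correction to your closing remark: $\sigma$-finiteness is used not only for the finite measure $\nu$ but also to produce the strictly positive $h\in L^p$. The hypothesis $p<\infty$ is what makes the dominated-convergence step work.
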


	\section{The main result in $L^p \times L^r$ for $1 \leq p,r \leq \infty$}
	We endow $L^p(T,Y) \times L^r(T,Z)$ with $\norm{\cdot}_{p,r}:=\norm{\cdot}_{L^p} + \norm{\cdot}_{L^r}$ and $Y \times Z$ with $\norm{\cdot}_{Y} + \norm{\cdot}_{Z}.$ For the purpose of unifying some arguments for finite and infinite exponents we denote $$\vartheta_s(a):= \begin{cases} a^{1/s}, \hspace{0.2cm} s < \infty \\
		1, \hspace{0.6cm} s=\infty \end{cases}.$$
	This function is nondecreasing.
		
		The following is an $L^p$-notion of equi-integrability in the sense of Marle \cite{Marle1974}. See also Giner \cite{Giner2012}.
		
	\begin{definition}
		$R \subset L^p(T,Y) \times L^r(T,Z)$ is equi-integrable if for any sequence of measurable sets $A_n \searrow \emptyset$ a.e. it holds $\sup_{z \in R} \norm{\ind_{A_n} z}_{L^p \times L^r} \to 0$ as $n \to \infty.$
	\end{definition}
	
	\begin{lemma}
		If $T$ is a complete $\sigma$-finite measure space, then $R \subset L^p(T,Y) \times L^r(T,Z)$ is equi-integrable if and only if the following two properties hold
		\begin{enumerate}[(i)]
			\item $\forall \varepsilon>0, \hspace{0.1cm} \exists K \in \Sigma, \hspace{0.1cm} \mu(K) < \infty, \hspace{0.1cm} \sup_{z \in R} \norm{\ind_{T \setminus K} z}_{L^p \times L^r} < \varepsilon;$
			
			\item $\forall \varepsilon>0 \hspace{0.1cm} \exists \alpha>0$ s.t. $\mu(A) < \alpha \Rightarrow \sup_{z \in R} \norm{\ind_{A} z}_{L^p \times L^r} < \varepsilon.$
		\end{enumerate}
	\end{lemma}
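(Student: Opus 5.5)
We prove the two implications separately. Write $\|z\|:=\|\ind_A z\|_{L^p\times L^r}$-type quantities as $N(A,z):=\|\ind_A z\|_{L^p\times L^r}$. Each coordinate satisfies $N(A,z)=\|\ind_A z_1\|_{L^p}+\|\ind_A z_2\|_{L^r}$, and for $s<\infty$ we have $\|\ind_A w\|_{L^s}^s=\int_A\|w\|^s\,\dd\mu$.

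\emph{Equi-integrability implies (i) and (ii).} The plan is to argue by contradiction, producing a decreasing sequence $A_n\searrow\emptyset$ along which the supremum stays bounded away from zero.

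For (i), suppose it fails for some $\varepsilon$. Fix an exhaustion $T=\bigcup_n T_n$ with $T_n$ increasing and $\mu(T_n)<\infty$. Put $A_n:=T\setminus T_n$, so $A_n\searrow\emptyset$. Since $\mu(T_n)<\infty$, the failure of (i) gives $\sup_R N(A_n,z)\ge\varepsilon$ for every $n$, contradicting equi-integrability.

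For (ii), suppose it fails for some $\varepsilon$. Then there are $z_k\in R$ and sets $B_k$ with $\mu(B_k)<2^{-k}$ and $N(B_k,z_k)\ge\varepsilon$. Put $A_n:=\bigcup_{k\ge n}B_k$. These sets decrease, $\mu(A_n)\le 2^{1-n}$, and hence $\bigcap_n A_n$ is null, so $A_n\searrow\emptyset$ a.e. Since $B_n\subset A_n$, monotonicity of $N(\cdot,z)$ in the set gives $\sup_R N(A_n,z)\ge N(B_n,z_n)\ge\varepsilon$, a contradiction.

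This direction needs no finiteness of the exponents beyond the obvious monotonicity of $N$ in $A$.

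\emph{(i) and (ii) imply equi-integrability.} Let $A_n\searrow\emptyset$ a.e. and $\varepsilon>0$. Choose $K$ from (i) and $\alpha$ from (ii). Since $\mu(K)<\infty$, continuity from above gives $\mu(A_n\cap K)\to0$, so $\mu(A_n\cap K)<\alpha$ for large $n$. Subadditivity of the seminorm (the indicators of disjoint pieces add) then gives
$$N(A_n,z)\le N(A_n\cap K,z)+N(T\setminus K,z)<2\varepsilon$$
uniformly in $z\in R$.

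\emph{The main obstacle: infinite exponents.} The delicate point is the case $p=\infty$ or $r=\infty$. There $N(A,z)$ does not become small when $\mu(A)$ does: for $A\neq\emptyset$ non-null, $\|\ind_A w\|_{L^\infty}$ is just the essential supremum on $A$.

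Both directions above are purely set-theoretic in $N$, so the equivalence itself still holds verbatim. In that case, however, both sides force the $L^\infty$-components to vanish in the limit. I would remark this explicitly (for instance, using the function $\vartheta_s$ to unify the notation) rather than attempt a separate estimate. The only measure-theoretic inputs used are the $\sigma$-finiteness exhaustion and continuity from above on finite-measure sets; completeness is used only so that "$A_n\searrow\emptyset$ a.e." is meaningful.
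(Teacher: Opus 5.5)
The paper states this lemma without proof; it only points to Marle and Giner for the notion. So there is no argument of the paper's to compare against. Your proof is correct and is the standard one.

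Both implications use only four facts:
\begin{enumerate}[(1)]
\item monotonicity of $A\mapsto\|\ind_A z\|_{L^p\times L^r}$ in $A$;
\item its subadditivity over disjoint sets;
\item an exhaustion of $T$ by sets of finite measure;
\item continuity from above on the finite-measure set $K$.
\end{enumerate}
Hence, as you observe, the equivalence holds verbatim for all $p,r\in[1,\infty]$.

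One small repair is needed in the proof that equi-integrability implies (ii). The negation of (ii) only gives $\sup_{z\in R}\|\ind_{B_k}z\|_{L^p\times L^r}\ge\varepsilon$, and this supremum need not be attained. So choose $z_k\in R$ with $\|\ind_{B_k}z_k\|_{L^p\times L^r}>\varepsilon/2$ and run the same contradiction with $\varepsilon/2$ in place of $\varepsilon$.

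Your closing remark that the $L^\infty$-components are forced to vanish is accurate only where $\mu$ is nonatomic; there, (ii) indeed forces the $L^\infty$-component of every $z\in R$ to be $0$. On atoms it fails. For example, take counting measure on $\NN$ and $p=\infty$: any single $z\in c_0\setminus\{0\}$ forms an equi-integrable set. None of your argument depends on that remark, so the proof stands.
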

	In $L^\infty(T,Y)$ the only possible equi-integrable set is $\{0\}.$ Thus when two $L^p$-variables are involved its usefull for the variable with finite exponent to correct (possibly) the infinite one.
	
	\begin{definition}
		Let $X$ be a Banach space, $H,V,R$ its subsets and $x_0 \in H.$ The set $H$ is $R$-correctable along $V$ at $x_0$ if there exists $\delta,\lambda>0$ such that
		$$
		H \cap (x_0 + \delta \clBB_{X}) + \eta V \subset H - \eta R, \hspace{0.5cm} \forall \eta \in [0,\lambda].
		$$
	\end{definition}
	This variational condition extends compact epi-Lipschitzness \cite{BorweinStrojwas1985}. A closed set $H$ is compactly epi-Lipschitz at $x_0$ exactly if $H$ is $K$-correctable along $V:=\varepsilon_0 \clBB_{X}$ for some $\varepsilon_0>0$ and some compact $K.$
	
	The following result encapsulates the argument for the incusion of pointwise selections-to-global in the main theorem.

	\begin{maintheorem} \label{theorem:osnoven-rezultat-potochovo-kum-globalno}
		Let $(T,\Sigma,\mu)$ be a complete $\sigma$-finite measure space, let $1 \leq p,r \leq \infty$ with atleast one finite, let $Y,Z$ be separable Banach spaces, let $S : T \rightrightarrows Y \times Z$ be measurable with nonempty closed values and $(x,y) \in \Sel_{p,r}(S).$
		
		If $D : T \rightrightarrows Y \times Z$ is measurable with nonempty closed values such that
		\begin{enumerate}[(a)]
			\item $D(t)$ is uniform tangent to $S(t)$ at $(x(t),y(t))$ a.e.
			
			\item $\Sel_{p,r}(S)$ is $R$-correctable along $\Sel_{p,r}(D)$ at $(x,y),$ for some equi-integrable family $R \subset L^p(T,Y) \times L^r(T,Z).$
		\end{enumerate}
		Then $\Sel_{p,r}(D)$ is uniform tangent to $\Sel_{p,r}(S)$ at $(x,y).$ 
	\end{maintheorem}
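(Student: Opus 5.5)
The plan is to fix $\varepsilon>0$ and produce a single pair $\delta,\lambda>0$ that works for every $v\in\Sel_{p,r}(D)$, by gluing two candidate points through decomposability. On a \emph{good} set we use pointwise uniform tangency from (a). On the complementary \emph{bad} set we use the correction from (b), and equi-integrability of $R$ makes that contribution small. The one structural fact used throughout is that if $w_1,w_2\in\Sel_{p,r}(S)$ and $G\in\Sigma$, then $\ind_G w_1+\ind_{T\setminus G}w_2\in\Sel_{p,r}(S)$.

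\emph{Step 1 (finite-measure localization and constants).} By the characterization of equi-integrability in the preceding lemma, choose $K\in\Sigma$ with $\mu(K)<\infty$ and $\sup_{\rho\in R}\norm{\ind_{T\setminus K}\rho}_{p,r}<\varepsilon/4$. Then choose $\alpha>0$ such that $\mu(A)<\alpha$ implies $\sup_{\rho\in R}\norm{\ind_A\rho}_{p,r}<\varepsilon/4$. Put $\varepsilon'>0$ with $\varepsilon'\bigl(\vartheta_p(\mu(K))+\vartheta_r(\mu(K))\bigr)<\varepsilon/2$. Apply (a) with $\varepsilon'$: for a.e.\ $t$ there are pointwise $\delta(t),\lambda(t)$. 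Let
$$T_n:=\{t\in K:\ \text{(a) holds at } t \text{ with } \delta=\lambda=1/n\}.$$
The sets $T_n$ increase to $K$ up to a null set. Fix $n$ with $\mu(K\setminus T_n)<\alpha/2$.

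\emph{Step 2 (global $\delta,\lambda$).} Let $\delta_c,\lambda_c$ be the constants from the correctability in (b). Take $\lambda=\min(\lambda_c,1/n)$. Choose $\delta\le\delta_c$ so small that every $(x',y')\in\Sel_{p,r}(S)$ with $\norm{(x',y')-(x,y)}_{p,r}\le\delta$ satisfies
$$\mu\bigl(\{t\in K:\norm{(x',y')(t)-(x,y)(t)}>1/n\}\bigr)<\alpha/2.$$
This follows from Chebyshev's inequality applied to the finite exponent, giving a bound of order $(n\delta)^p$ or $(n\delta)^r$. For an infinite exponent the corresponding component is already pointwise within $\delta\le 1/(2n)$ a.e., and only the finite-exponent component needs Chebyshev; this is where the hypothesis that at least one exponent is finite enters.

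\emph{Step 3 (construction).} Fix $(x',y')\in\Sel_{p,r}(S)$ that is $\delta$-close to $(x,y)$, fix $v\in\Sel_{p,r}(D)$ and $\eta\in[0,\lambda]$. Let
$$G:=T_n\cap\{t:\norm{(x',y')(t)-(x,y)(t)}\le 1/n\},\qquad B:=T\setminus G.$$
Then $\mu(B\cap K)<\alpha$. By (b) there is $\rho\in R$ with $w_2:=(x',y')+\eta(v+\rho)\in\Sel_{p,r}(S)$. On $G$ the multifunction
$$t\mapsto S(t)\cap\bigl[(x',y')(t)+\eta(v(t)+\varepsilon'\clBB)\bigr]$$
has nonempty closed values, because $v(t)\in D(t)$ and (a) applies at $t$. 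Its graph is $\Sigma\otimes\mathcal B$-measurable, so by Aumann's selection theorem (using completeness of $\mu$) it has a measurable selection $w_1$. Off $G$, set $w_1:=w_2$. The glued point is $w:=\ind_G w_1+\ind_B w_2$. It lies in $L^p\times L^r$ because it differs from $(x',y')+\eta v$ by at most $\eta\varepsilon'$ on $G\subset K$, a set of finite measure. Hence $w\in\Sel_{p,r}(S)$. The error satisfies
$$\norm{w-(x',y')-\eta v}_{p,r}\le\eta\Bigl(\varepsilon'\bigl(\vartheta_p(\mu(K))+\vartheta_r(\mu(K))\bigr)+\norm{\ind_{T\setminus K}\rho}_{p,r}+\norm{\ind_{B\cap K}\rho}_{p,r}\Bigr)<\eta\,\varepsilon,$$
so $w\in(x',y')+\eta(v+\varepsilon\clBB)$, which is the required uniform tangency.

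\emph{Main obstacle.} I expect the delicate point to be the measurability of $T_n$, that is, of the pointwise tangency moduli. I would handle it by reducing the quantifiers in (a) to countable ones: take Castaing representations of $S$ and $D$, use rational $\eta$ and rational ball radii, and use closedness of the values. The condition ``$S(t)$ meets $y+\eta(d+\varepsilon'\clBB)$ for all $y\in S(t)$ near $x(t)$ and all $d\in D(t)$'' can then be expressed through distance functions $t\mapsto d_{S(t)}(\cdot)$ evaluated along countably many measurable maps. To stay strictly within a countable family, one may need to slightly enlarge $\varepsilon'$, for instance to $2\varepsilon'$, which the constants above absorb.
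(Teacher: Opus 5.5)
Your proposal is correct and follows the paper's proof essentially step for step. The paper uses the same finite-measure set $K$ and small-set constant $\alpha$ from the equi-integrability of $R$. It exhausts $K$ by measurable sets on which pointwise uniform tangency holds with fixed radii: these are the paper's $E_m$, defined through Castaing representations of $S$ and $D$, rational $\theta$, and the bound $d_{S(t)}(s_j(t)+\theta d_i(t))\le\beta\theta$, and the selection is then taken in a ball of radius $2\beta\eta$, as you anticipate. It also uses Chebyshev control of the set where the base point is far from $(x,y)$, and glues a measurable selection on the good set to the $R$-corrected point elsewhere.

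Two harmless remarks. First, the finiteness of one exponent is not actually used at the Chebyshev step: if $p=r=\infty$, the far set is simply null. Second, when you replace $T_n$ by its countable version, you need slack in the base-point radius as well as in $\varepsilon'$. The paper gets this by requiring each component to be within $1/(2m_0)$, so that the sum is strictly below $1/m_0$.
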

	\begin{proof}
		Fix $\varepsilon>0.$ By the equi-integrability of $R$ choose $K \in \Sigma$ with $0< \mu(K) < \infty$ and $\alpha>0$ such that
		\begin{align} \label{eq:prod-equi-1}
			\sup_{q \in R} \norm{\ind_{T \setminus K} q}_{L^p \times L^r} &< \frac{\varepsilon}{8}, \\
			\mu(A) < \alpha \Longrightarrow \sup_{q \in R} \norm{\ind_{A} q}_{L^p \times L^r} &< \frac{\varepsilon}{8}.
		\end{align}
		Choose $\beta>0$ such that
		\begin{equation} \label{eq:prod-equi-2}
			2 \beta. \left( \vartheta_p (\mu(K)) + \vartheta_r(\mu(K))\right) < \frac{\varepsilon}{2}.
		\end{equation}
		Let $s_j : T \to Y \times Z, d_i : T \to Y \times Z,$ for $i,j \in \NN,$ be Castaing representations of $S$ and $D,$ respectively.
		
		For $m \in \NN,$ let $E_m$ consist of all $t \in T$ such that for every $i,j \in \NN$ and every $\theta \in \left[0, \frac{1}{m}\right] \cap \QQ$ the following implication holds
		\begin{equation} \label{eq:product-equi-4}
			\norm{s_{j}(t) - (x(t),y(t))}_{Y \times Z} < \frac{1}{m} \Longrightarrow d_{S(t)}( s_j(t) + \theta d_i(t) ) \leq \beta \theta.
		\end{equation}
		Then $E_m$ are measurable, as they are obtained as a countably many intersections and unions of measurable sets, and increasing $E_m \subset E_{m+1}.$
		
		Let $T_0$ be the set of all $t \in T$ such that $D(t)$ is uniform tangent set to $S(t)$ at $(x(t),y(t)).$ By assumption $\mu(T \setminus T_0)=0.$ Notice that 
		$$T_0 \subset \bigcup_{m=1}^{\infty} E_m.$$ 
		Indeed, let $t \in T_0.$ For the fixed $\beta>0$ there exists $\delta_t, \lambda_t>0$ such that
		\begin{equation} \label{eq:product-equi-5}
			d_{S(t)}(z+ \eta w) \leq \beta \eta,
		\end{equation}
		whenever
		$$
		z \in S(t), \hspace{0.2cm} \norm{z - (x(t),y(t))}_{Y \times Z} < \delta_t, \hspace{0.2cm} w \in D(t), \hspace{0.2cm} \eta \in [0,\lambda_t].
		$$
		Now we choose $m$ so large that 
		$$\frac{2}{m}<\delta_{t} \hspace{0.2cm} \text{and} \hspace{0.2cm} \frac{1}{m} < \lambda_t.$$ Then (\ref{eq:product-equi-5}) gives the conclusion in (\ref{eq:product-equi-4}) and proves $t \in E_m.$ Since $K$ has finite measure and $E_m \nearrow T,$ up to a null set, we can choose $m_0$ such that
		\begin{equation} \label{eq:product-equi-6}
			\mu(K \setminus E_{m_0}) < \frac{\alpha}{2}.
		\end{equation}
		Let $\rho,\lambda>0$ be supplied by the $R$-correctability of $\Sel_{p,r}(S)$ along $\Sel_{p,r}(D)$ at $(x,y).$ Put $\lambda_0 :=\min\left\{ \lambda, \frac{1}{m_0}\right\}$ and choose $\delta>0$ so small that $\delta < \min \left\{ \rho, \frac{1}{2m_0}\right\}$ and
		\begin{equation} \label{eq:product-equi-7}
			\sum_{s \in \{p,r\} \setminus \{\infty\}} (2m_{0})^{s} \delta^s < \frac{\alpha}{2}.
		\end{equation}
		
		Fix arbitrary
		$$
		(u,v) \in \Sel_{p,r}(D), \hspace{0.2cm} (a,b) \in \Sel_{p,r}(S) \cap ( (x,y) + \delta \clBB_{L^p \times L^r} ), \hspace{0.2cm} \eta \in (0,\lambda_0].
		$$
		Define $B:=B_a \cup B_b,$ where
		\begin{align*}
		B_{a}:&=\Big\{t \in K : \norm{a(t)- x(t)}_{Y} \geq \frac{1}{2m_0}\Big\}, \\
		 B_{b}:&=\Big\{t \in K : \norm{b(t)-y(t)}_{Z} \geq \frac{1}{2m_0}\Big\}.
		\end{align*}
		If $p=\infty,$ then $B_{a}=\emptyset;$ otherwise $p<\infty$ and by Chebyshev's inequality
		$$
		\mu(B_a) \leq (2m_{0})^{p} \norm{x-a}^p_{L^p}.
		$$
		Likewise, $B_b=\emptyset$ if $r=\infty,$ whereas
		$$
		\mu(B_b) \leq (2m_{0})^r \norm{y-b}_{L^r}^{r}.
		$$
		for $r<\infty.$
		Hence by $(\ref{eq:product-equi-7})$ we conclude $\mu(B)  < \frac{\alpha}{2}.$
		Set
		$$
		G:=(K \cap E_{m_0}) \setminus B, \hspace{0.2cm} A:=K \setminus G.
		$$
		Since
		$A=K \setminus G = K \setminus \left(  ( K \cap E_{m_0}) \setminus B\right) = (K \setminus E_{m_0}) \cup B$ we have 
		$$\mu(A) \leq \mu(K \setminus E_{m_0}) + \mu(B) < \alpha.$$
		We can then estimate that for every $t \in G$
		\begin{equation} \label{eq:product-equi-10}
			d_{S(t)}( (a(t),b(t)) + \eta (u(t),v(t)) ) \leq \beta \eta.
		\end{equation}
		Indeed, if $t \in G,$ then 
		$$\norm{(a(t),b(t)) - (x(t),y(t))}_{Y \times Z} = \norm{a(t) - x(t)}_{Y} + \norm{b(t) - y(t)}_{Z}< \frac{1}{m_0}.$$
		Choose sequences
		$$
		s_{j_k}(t) \to (a(t),b(t)), \hspace{0.2cm} d_{i_k}(t) \to (u(t),v(t)), \hspace{0.2cm} \theta_k \in \left[0, \frac{1}{m_0} \right] \cap \QQ, \hspace{0.2cm} \theta_k \to \eta.
		$$
		For all sufficiently large $k$ 
		$$\norm{s_{j_k}(t) - (x(t),y(t))}_{Y \times Z}< \frac{1}{m_0}.$$ 
		Since $t \in E_{m_0}$ it holds
		$$
		d_{S(t)}(s_{j_k}(t) + \theta_k d_{i_k}(t)) \leq \beta \theta_k.
		$$
		Passing to the limit we obtain (\ref{eq:product-equi-10}) on $G.$
		
		For the fixed $\eta,$ define $\Phi_\eta : G \rightrightarrows Y \times Z$ by
		$$
		\Phi_\eta(t):=S(t) \cap \left( (a(t),b(t)) + \eta (u(t),v(t)) + 2 \beta \eta \clBB_{Y \times Z} \right).
		$$
		The estimate (\ref{eq:product-equi-10}) implies that $\Phi_{\eta}$ has nonempty values. Therefore $\Phi_\eta$ is measurable with nonempty closed values. Let $(a_{\eta}^{G},b_{\eta}^{G}) : G \to Y \times Z$ be a measurable selector of $\Phi_\eta.$ Then $(a_{\eta}^G(t), b_{\eta}^{G}(t)) \in S(t)$ and
		\begin{equation} \label{eq:product-equi-13}
		\norm{a_{\eta}^{G}(t) - a(t) - \eta u(t)}_{Y} + \norm{b_{\eta}^{G}(t) - b(t) - \eta v(t)}_{Z} \leq 2 \beta \eta,
		\end{equation}
		for a.e. $t \in G.$
		
		By the $R$-correctabillity, there exists $r_{\eta} \in R$ and $(\widetilde{a}_{\eta},\widetilde{b}_{\eta}) \in \Sel_{p,r}(S)$ such that
		\begin{equation} \label{eq:product-equi-14}
			(\widetilde{a}_{\eta},\widetilde{b}_{\eta}) = (a,b) + \eta ( (u,v) + r_{\eta}).
		\end{equation}
		Define the patched selectors
		\begin{align*}
			\widehat{a}_{\eta}&:=\ind_{G} a_{\eta}^{G} + \ind_{T \setminus G} \widetilde{a}_{\eta}, \\
			\widehat{b}_{\eta} &:= \ind_{G} b_{\eta}^{G} + \ind_{T \setminus G} \widetilde{b}_{\eta}.
		\end{align*}
		Then $(\widehat{a}_{\eta},\widehat{b}_{\eta}) \in \Sel_{p,r}(S).$
		
		On $G,$ the estimate (\ref{eq:product-equi-13}) and $G \subset K$ give
		\begin{align*}
			\norm{\ind_{G}(\widehat{a}_{\eta} - a - \eta u)}_{L^p} + \norm{\ind_{G}(\widehat{b}_{\eta}-b - \eta v)}_{L^r} 
			&\leq 2 \beta \eta ( \vartheta_p ( \mu(K)) + \vartheta_r(\mu(K))) \\
			&< \frac{\varepsilon \eta}{2}.
		\end{align*}
		On $T \setminus G$ the patched selector coincides with the corrected one and hence
		\begin{equation} \label{eq:product-equi-15}
			\ind_{T \setminus G}( (\widehat{a}_{\eta},\widehat{b}_{\eta}) -(a,b)- \eta (u,v)) = \eta \ind_{T \setminus G} r_{\eta}.
		\end{equation}
		Since $T \setminus G = (T \setminus K) \cup A$ and $\mu(A) <\alpha$ we estimate
		\begin{align*}
			\norm{\ind_{T \setminus G}( (\widehat{a},\widehat{b}) -(a,b) - \eta (u,v))}_{p,r} &= \eta \norm{\ind_{T \setminus G} . r_{\eta}}_{p,r} \\
			&\leq \eta. \left(\norm{\ind_{T \setminus K}. r_{\eta}}_{p,r} + \norm{\ind_{A} r_{\eta}}_{p,r} \right) \\
			&< \eta \left(\frac{\varepsilon}{8} + \frac{\varepsilon}{8} \right) \\
			&<\frac{\eta \varepsilon}{2}.
		\end{align*}
		Where $\norm{\ind_{T \setminus G} w_{\eta}}_{p,r} \leq \norm{w_{\eta}}_{p,r} \leq \frac{\varepsilon}{4}.$ The assertion for $\eta=0$ is immediate.
		Combining the estimates on $G$ and $T \setminus G$ we derive
		$$
		\norm{(\widehat{a}_{\eta},\widehat{b}_{\eta}) - (a,b) - \eta (u,v)}_{L^p \times L^r} < \varepsilon \eta.
		$$
		This and the fact that $\delta$ and $\lambda_0$ are independent of $(u,v), (a,b)$ and $\eta$ prove $\Sel_{p,r}(D)$ is a uniform tangent set to $\Sel_{p,r}(S)$ at $(x,y).$
	\end{proof}
	The following is a typical possition for the case of finite exponents.
	\begin{proposition}
		Let $(T,\Sigma,\mu)$ be a complete, $\sigma$-finite measure space, let $Y,Z$ be separable Banach spaces, let $S : T \rightrightarrows Y \times Z$ be measurable with nonempty closed values. 
		
		If $1 \leq p,r < \infty, \hspace{0.1cm} (x,y) \in \Sel_{p,r}(S)$ and $D : T \rightrightarrows Y \times Z$ is measurable with nonempty closed values such that
		\begin{enumerate}[(i)]
			\item $D(t)$ is uniform tangent set to $S(t)$ at $(x(t),y(t))$ a.e.;
			
			\item There exists $g \in L^p(T,[0,\infty))$ and $h \in L^r (T, [0,\infty))$ such that 
			$$D(t) \subset (g(t),h(t)) \clBB_{Y \times Z};$$
		\end{enumerate}
		Then 
		$\Sel_{p,r}(D)$ is uniform tangent set to $\Sel_{p,r}(S)$ at $(x,y);$
	\end{proposition}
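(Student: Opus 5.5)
The plan is to deduce the statement from Theorem \ref{theorem:osnoven-rezultat-potochovo-kum-globalno}. Hypothesis (a) there is exactly (i). So it suffices to check (b): $\Sel_{p,r}(S)$ is $R$-correctable along $\Sel_{p,r}(D)$ at $(x,y)$ for some equi-integrable $R \subset L^p(T,Y)\times L^r(T,Z)$. Here we read $D(t)\subset (g(t),h(t))\clBB_{Y\times Z}$ as meaning: $(w_1,w_2)\in D(t)$ implies $\norm{w_1}_Y\le g(t)$ and $\norm{w_2}_Z\le h(t)$.

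The natural candidate is the order interval
$$
R:=\{(\xi,\zeta)\in L^p(T,Y)\times L^r(T,Z) : \norm{\xi(t)}_Y\le g(t),\ \norm{\zeta(t)}_Z\le h(t)\ \text{a.e.}\}.
$$
It is equi-integrable because $p,r<\infty$. If $A_n\searrow\emptyset$ a.e., then
$$
\sup_{R}\norm{\ind_{A_n}(\xi,\zeta)}_{p,r}\le \norm{\ind_{A_n}g}_{L^p}+\norm{\ind_{A_n}h}_{L^r}\to 0
$$
by dominated convergence. This is the only place where finiteness of both exponents is used: with $p=\infty$ the term $\norm{\ind_{A_n}g}_{L^\infty}$ need not tend to $0$.

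For correctability the correction should be trivial, taking $r_\eta:=-(u,v)$. Fix any $\delta,\lambda>0$, some $(a,b)\in\Sel_{p,r}(S)\cap((x,y)+\delta\clBB)$, some $(u,v)\in\Sel_{p,r}(D)$ and some $\eta\in[0,\lambda]$. Then
$$
(a,b)+\eta(u,v)=(a,b)-\eta\,r_\eta\in \Sel_{p,r}(S)-\eta R,
$$
provided $-(u,v)\in R$. This membership holds because $(u(t),v(t))\in D(t)$ a.e., which gives $\norm{u(t)}\le g(t)$ and $\norm{v(t)}\le h(t)$, and $R$ is symmetric.

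The theorem then yields that $\Sel_{p,r}(D)$ is uniform tangent to $\Sel_{p,r}(S)$ at $(x,y)$. One should also note that $\Sel_{p,r}(D)$ is nonempty, or the statement is vacuous. Nonemptiness follows from (ii): any measurable selector of $D$ lies in $L^p\times L^r$ by the domination.

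The only potential obstacle is confirming that this trivial correction meets the definition of correctability used in the theorem's proof. In that proof, only $r_\eta\in R$ with $(a,b)+\eta((u,v)+r_\eta)\in\Sel_{p,r}(S)$ is used, and our choice gives exactly $(a,b)\in\Sel_{p,r}(S)$. All the real work, namely the patching on $G$ versus $T\setminus G$, is already inside Theorem \ref{theorem:osnoven-rezultat-potochovo-kum-globalno}.
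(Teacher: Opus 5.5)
Your proposal is correct and follows the paper's proof. Both apply Theorem~\ref{theorem:osnoven-rezultat-potochovo-kum-globalno} with the trivial correction $r_\eta=-(u,v)$, and both get equi-integrability of $R$ from the domination by $g\in L^p$, $h\in L^r$ with $p,r<\infty$; the paper takes $R:=-\Sel_{p,r}(D)$ while you take the larger order interval of functions dominated by $(g,h)$, which changes nothing (and your dominated-convergence argument is stated more cleanly).
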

	
	\begin{proof}
		Under the stated assumptions, for finite exponents $\Sel_{p,r}(S)$ is $R$-correctable along $\Sel_{p,r}(D)$ at $(x,y)$ with $R:=-\Sel_{p,r}(D).$ In particular $\Sel_{p,r}(S)$ is first-order $R$-correctable along $\Sel_{p,r}(D)$ at $(x,y).$  The domination condition on $D(t)$ ensures that $\Sel_{p,r}(D)$ is equi-integrable. Indeed, for $d \in \Sel_{p,r}(D)$ pick $q:=-d.$ Then $z+\eta (d+q)=z \in \Sel_{p,r}(S).$ Furhter for $A \in \Sigma$ it holds 
		$$\sup_{q \in -\Sel_{p,r}(D)} \norm{\ind_{A} q}_{L^p \times L^r} \leq \norm{\ind_{A} g}_{L^p} + \norm{\ind_{A} h}_{L^r}.$$
		Combined with the fact that for $p,r < \infty$ and $A_n \searrow \emptyset$ it holds $\norm{\ind_{A_n} g}_{L^p} \to 0.$ Similarly for the $L^r$-variable.  
		And analogous result holds on $T \setminus K.$ Thus  Theorem \ref{theorem:osnoven-rezultat-potochovo-kum-globalno} yields the conclusion.
	\end{proof}
	From this, we are able to obtain the the pointwise-to-global inclusion whenever the exponents are finite. A variat to which has a direct proof in \cite{PHDGiner}.
	\begin{corollary}
		Let $(T,\Sigma,\mu)$ be a complete, $\sigma$-finite measure space, let $Y,Z$ be separable Banach spaces, let $S : T \rightrightarrows Y \times Z$ be measurable with nonempty closed values. 
		If $1 \leq p,r < \infty, \hspace{0.1cm} (x,y) \in \Sel_{p,r}(S),$ then
		$$
		\Sel_{p,r}\left\{ t \mapsto \CT_{S(t)(x(t),y(t))}\right\} \subset \CT_{\Sel_{p,r}(S)}(x,y).
		$$
	\end{corollary}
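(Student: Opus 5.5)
Fix $(u,v)$ with $(u(t),v(t)) \in \CT_{S(t)}(x(t),y(t))$ a.e. The idea is to apply the preceding Proposition with a multifunction $D$ that has the single value $(u(t),v(t))$. Put $D(t):=\{(u(t),v(t))\}$. Since $u,v$ are measurable, $D$ is measurable with nonempty closed values. Then $\Sel_{p,r}(D)=\{(u,v)\}$ and the desired inclusion $(u,v)\in \CT_{\Sel_{p,r}(S)}(x,y)$ follows as soon as the singleton is a uniform tangent set. For a singleton, uniform tangency is exactly the definition of Clarke tangency.

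We check the two hypotheses of the Proposition in turn.

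For (i), fix $t$ in the full-measure set where $(u(t),v(t))\in\CT_{S(t)}(x(t),y(t))$. By the definition of the Clarke tangent cone, for every $\varepsilon>0$ there are $\delta,\lambda>0$ such that
$$S(t)\cap[z+\eta((u(t),v(t))+\varepsilon\clBB_{Y\times Z})]\neq\emptyset$$
for all $z\in S(t)$ near $(x(t),y(t))$ and all $\eta\in[0,\lambda]$. This is precisely the statement that $D(t)$ is a uniform tangent set to $S(t)$ at $(x(t),y(t))$.

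For (ii), take $g(t):=\norm{u(t)}_Y\in L^p$ and $h(t):=\norm{v(t)}_Z\in L^r$. Then $D(t)\subset (g(t),h(t))\clBB_{Y\times Z}$, read componentwise as in the Proposition. This uses only $u\in L^p(T,Y)$ and $v\in L^r(T,Z)$.

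The Proposition then gives that $\{(u,v)\}$ is a uniform tangent set to $\Sel_{p,r}(S)$ at $(x,y)$, and hence $(u,v)\in\CT_{\Sel_{p,r}(S)}(x,y)$.

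The only point requiring care is the correctability step inside the Proposition when $D$ is a singleton. With $R:=\{-(u,v)\}$, the inclusion $H\cap((x,y)+\delta\clBB)+\eta(u,v)\subset H-\eta R$ holds trivially: any $z$ in the left-hand intersection can be written as $z+\eta(u,v)=z-\eta(-(u,v))+\ldots$, and indeed $z+\eta((u,v)+(-(u,v)))=z\in H$. Moreover, a single element of $L^p\times L^r$ with $p,r<\infty$ is equi-integrable by dominated convergence. I expect no genuine obstacle beyond confirming that the measurability of $t\mapsto\CT_{S(t)}(x(t),y(t))$ (the earlier Lemma) is not even needed here, since $D$ is built directly from $(u,v)$.
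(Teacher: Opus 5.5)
Your proposal is correct and follows the paper's proof: you take $D(t):=\{(u(t),v(t))\}$, note that its values are uniform tangent sets a.e.\ by the definition of the Clarke tangent cone, dominate it by $g=\norm{u(\cdot)}_Y$ and $h=\norm{v(\cdot)}_Z$, and invoke the preceding Proposition to conclude that $\{(u,v)\}$ is a uniform tangent set, i.e.\ $(u,v)\in\CT_{\Sel_{p,r}(S)}(x,y)$. Your extra check that $R=\{-(u,v)\}$ gives trivial correctability and is equi-integrable for finite exponents is exactly what the Proposition's proof does with $R=-\Sel_{p,r}(D)$. Writing the majorants as norms is slightly more precise than the paper's $g:=u$, $h:=v$.
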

	\begin{proof}
		Let $(u,v) \in 	\Sel_{p,r} \left\{t \mapsto \CT_{S(t)}(x(t),y(t))\right\}$ and let $D_{(u,v)}(t):=\{(u(t),v(t))\}.$ Then clearly the values of $D_{(u,v)}$ are uniform tangent sets a.e. and $g:=u, h:=v$ are trivial majorants of the values. Then $\Sel_{p,r} (D_{(u,v)})=\{(u,v)\}$ is uniform tangent set to $\Sel_{p,r}(S)$ at $(x,y).$ Therefore $(u,v) \in \CT_{\Sel_{p,r}(S)}(x,y).$
	\end{proof}
	
	We now move on to the global-to-pointwise inclusion.
	
	\begin{lemma} \label{lemma:sum-of-measurable-functions}
		Let $1 \leq p,r \leq \infty,$ let $R \in \Sigma$ satisfy $0 < \mu(R) < \infty$ and let measurable functions $f,g : R \to [0,\infty]$ satisfy
		$$
		f(t) + g(t) > c, \hspace{0.2cm} \text{a.e. $t \in R,$}
		$$
		where $c>0.$ Then
		$$
		\norm{f}_{L^p(R)} + \norm{g}_{L^r(R)} \geq \frac{c}{2} \min \left\{ \vartheta_p \left( \frac{\mu(R)}{2} \right), \vartheta_r \left( \frac{\mu(R)}{2} \right)\right\}.
		$$
	\end{lemma}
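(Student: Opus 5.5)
Since $f(t)+g(t)>c$ a.e. on $R$, at almost every $t \in R$ at least one of $f(t)>c/2$ or $g(t)>c/2$ holds. So I will split $R$ into
$$
R_f:=\{t \in R : f(t) > c/2\}, \qquad R_g:=\{t \in R : g(t) > c/2\}.
$$
Both sets are measurable, and $R_f \cup R_g$ has full measure in $R$. Therefore $\mu(R_f)+\mu(R_g) \geq \mu(R)$, and at least one of them has measure at least $\mu(R)/2$.

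\emph{Case $\mu(R_f) \geq \mu(R)/2$.} If $p<\infty$, I bound the $L^p$ norm from below on $R_f$:
$$
\norm{f}_{L^p(R)} \geq \left(\int_{R_f} f^p \dd \mu\right)^{1/p} \geq \frac{c}{2}\,\mu(R_f)^{1/p} \geq \frac{c}{2}\,\vartheta_p\!\left(\frac{\mu(R)}{2}\right).
$$
The last step uses that $\vartheta_p$ is nondecreasing. If $p=\infty$, then $\mu(R_f)\geq \mu(R)/2>0$, so $f>c/2$ on a set of positive measure. Hence $\norm{f}_{L^\infty(R)} \geq c/2 = \frac{c}{2}\vartheta_\infty(\mu(R)/2)$. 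Since $g \geq 0$, in either subcase
$$
\norm{f}_{L^p(R)}+\norm{g}_{L^r(R)} \geq \frac{c}{2}\vartheta_p\!\left(\frac{\mu(R)}{2}\right) \geq \frac{c}{2}\min\left\{\vartheta_p\!\left(\frac{\mu(R)}{2}\right),\vartheta_r\!\left(\frac{\mu(R)}{2}\right)\right\}.
$$

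\emph{Case $\mu(R_g) \geq \mu(R)/2$.} This is symmetric: the same argument on $R_g$ with exponent $r$ gives the lower bound $\frac{c}{2}\vartheta_r(\mu(R)/2)$, which again dominates the minimum.

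There is no real obstacle here. The points to watch are that the norms may be $+\infty$, in which case the inequality is trivial, and that $\mu(R)>0$ is needed for the $L^\infty$ subcase. Because $f+g>c$ holds strictly, the union $R_f\cup R_g$ covers $R$ up to a null set, so no $\varepsilon$-loss is needed in the splitting.
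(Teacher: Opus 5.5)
Your proof is correct and follows essentially the paper's argument. The paper takes $R_f=\{f\geq c/2\}$ and, when $\mu(R_f)<\mu(R)/2$, passes to the complement $R\setminus R_f$, where $g>c/2$ a.e.; your symmetric split into $\{f>c/2\}$ and $\{g>c/2\}$ with subadditivity is the same pigeonhole step, and the $L^p$ lower bound and the $p=\infty$ subcase are handled identically.
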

	\begin{proof}
		Let $R_f :=\{ t \in R : f(t) \geq \frac{c}{2}\}.$ If $\mu(R_f) \geq \frac{\mu(R)}{2}.$ Then $f \geq \frac{c}{2}$ on a set of measure at least $\frac{\mu(R)}{2}>0.$ In the case $p < \infty$ this yields
		\begin{align*}
			\norm{f}_{L^p(R)} \geq \left( \int_{R_f} f(t)^p \dd \mu(t) \right)^{1/p} \geq \frac{c}{2} \mu(R_f)^{1/p} \geq \frac{c}{2} \left( \frac{\mu(R)}{2} \right)^{1/p}.
		\end{align*}
		In the case $p=\infty$ simply $\norm{f}_{L^\infty(R)} \geq \frac{c}{2}.$ Combined $\norm{f}_{L^p(R)} \geq \frac{c}{2} \vartheta_p \left( \frac{\mu(R)}{2} \right).$
		
		Else $\mu(R_f) < \frac{\mu(R)}{2}.$ But then, by virtue of $R=R_f \cup (R \setminus R_f),$ we get
		\begin{align*}
			\mu(R) = \mu(R_f) + \mu( R \setminus R_f) <  \frac{\mu(R)}{2} + \mu(R \setminus R_f).
		\end{align*}
		Consequently $\mu(R \setminus R_f) > \frac{\mu(R)}{2}.$ But on $R \setminus R_f$ it holds $ f < \frac{c}{2}.$ Therefore 
		$$g > c - f > \frac{c}{2}$$ 
		on a set of measure atleast $\frac{\mu(R)}{2}>0.$ Repeating the argument from the first part of the proof yields $\norm{g}_{L^r(R)} \geq \frac{c}{2} \vartheta_r \left( \frac{\mu(R)}{2} \right).$ Combined with the nonnegativity of the norms the estimate in the conclusion follows.
	\end{proof}
	
	\begin{maintheorem} \label{theorem:productMainTheorem-global-to-pointwise-p-r}
		Let $1 \leq p,r \leq \infty, $ let $(T,\Sigma,\mu)$ be a complete $\sigma$-finite measure space, let $Y,Z$ be separable Banach spaces and  $S : T \rightrightarrows Y \times Z$ be a measurable with nonempty closed values. If $(x,y) \in \Sel_{p,r}(S),$ then
		$$
		\CT_{\Sel_{p,r}(S)}(x,y) \subset \Sel_{p,r} \left\{ t \mapsto \CT_{S(t)}(x(t),y(t))\right\}.
		$$
	\end{maintheorem}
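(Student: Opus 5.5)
We argue by contradiction. Fix $(u,v) \in \CT_{\Sel_{p,r}(S)}(x,y)$ and suppose the set
$$N:=\{t \in T : (u(t),v(t)) \notin \CT_{S(t)}(x(t),y(t))\}$$
has positive measure. It is measurable by the measurability lemma for $t \mapsto \CT_{S(t)}(x(t))$, applied to $S$ and $(x,y)$. For $t \in N$, Treiman's lemma (Lemma 2.1 in \cite{TreimanLemma}), applied in $Y \times Z$, gives an $\varepsilon_0(t)>0$ with the following property: for every $\delta>0$ there exist $\lambda>0$ and a point $z' \in S(t)$ with $\norm{z'-(x(t),y(t))}<\delta$ such that $S(t) \cap [z'+(0,\lambda)((u(t),v(t))+\varepsilon_0(t)\clBB)] = \emptyset$.

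Put $N_k:=\{t \in N : \varepsilon_0(t) \text{ can be taken} \geq 1/k\}$. Then $N=\bigcup_k N_k$, so by $\sigma$-finiteness some $N_k$ contains a set $R \in \Sigma$ with $0<\mu(R)<\infty$. We fix $k$ and $R$ once and for all. The constants must be chosen in the right order, because global tangency will be tested with a tolerance that may depend only on $k$ and $\mu(R)$. So we choose
$$\varepsilon' < \frac{1}{4k}\min\Big\{\vartheta_p\Big(\frac{\mu(R)}{4}\Big),\vartheta_r\Big(\frac{\mu(R)}{4}\Big)\Big\},$$
and let $\delta,\lambda>0$ be the constants supplied by the global Clarke tangency of $(u,v)$ with tolerance $\varepsilon'$.

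Next we pick $c>0$ so small that $c\,(\vartheta_p(\mu(R))+\vartheta_r(\mu(R)))<\delta$, and we build the perturbed base point. On $R$ we select measurably a point $z'(t)=(a(t),b(t)) \in S(t)$ with $\norm{z'(t)-(x(t),y(t))}\leq c$, together with a number $\lambda(t)>0$, such that Treiman's emptiness holds with $\varepsilon_0=1/k$. Off $R$ we set $(a,b):=(x,y)$. Then $(a,b) \in \Sel_{p,r}(S)$ and $\norm{(a,b)-(x,y)}_{p,r}<\delta$.

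Since $\lambda(t)>0$ on $R$, we can choose $n$ so large that $R_n:=\{t \in R : \lambda(t) \geq 1/n\}$ satisfies $\mu(R_n)\geq \mu(R)/2$. Take any $\eta \in (0,\min\{\lambda,1/n\})$. Global tangency yields $(a',b') \in \Sel_{p,r}(S)$ with
$$\norm{(a',b')-(a,b)-\eta(u,v)}_{p,r}\leq \varepsilon'\eta.$$
For a.e. $t \in R_n$ we have $(a'(t),b'(t)) \in S(t)$ and $\eta<\lambda(t)$, so Treiman's emptiness gives
$$\norm{a'(t)-a(t)-\eta u(t)}_Y+\norm{b'(t)-b(t)-\eta v(t)}_Z > \eta/k .$$
Lemma \ref{lemma:sum-of-measurable-functions}, applied on $R_n$ with $c=\eta/k$, gives a lower bound for the $L^p\times L^r$ norm of
$$\eta\,\frac{1}{2k}\min\{\vartheta_p(\mu(R_n)/2),\vartheta_r(\mu(R_n)/2)\}\geq \eta\,\frac{1}{2k}\min\{\vartheta_p(\mu(R)/4),\vartheta_r(\mu(R)/4)\}.$$
This exceeds $\varepsilon'\eta$ by the choice of $\varepsilon'$, a contradiction. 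The argument works for all exponents, including $\infty$, because $\vartheta$ handles that case uniformly.

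The main obstacle is the measurable choice of the Treiman witnesses $\varepsilon_0(t)$, $z'(t)$ and $\lambda(t)$. We would handle it as in the proof of Theorem \ref{theorem:osnoven-rezultat-potochovo-kum-globalno}, through Castaing representations $s_j$ of $S$ and rational parameters. Concretely, we replace emptiness by the condition $d_{S(t)}(s_j(t)+\theta(u(t),v(t)))\geq \theta/k$ for all rational $\theta \in (0,\lambda)$. This is measurable in $t$, and it follows from emptiness with $\varepsilon_0=2/k$ after moving from $z'$ to a nearby $s_j(t)$, since the distance function is $1$-Lipschitz and the loss is absorbed by halving $\varepsilon_0$. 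By continuity it then holds for all real $\theta\in(0,\lambda)$. The sets defined by this condition are countable unions over $(j,\lambda)$ of measurable sets. Choosing for each $t$ the least index $(j,\lambda)$ that works gives a measurable countably-valued selection of $z'$ and $\lambda$, and this is exactly the decomposition of $N$ into countably many measurable pieces. The contradiction argument then runs with the distance inequality in place of emptiness, replacing $\eta/k$ by $\eta/(2k)$ and adjusting $\varepsilon'$ by a factor of $2$.
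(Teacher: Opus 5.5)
Your quantitative argument is sound, and it is the paper's mechanism. You choose $\varepsilon'$ from $k$ and $\mu(R)$ before invoking global tangency, perturb the base point only on $R$ by at most $c$, and apply the lemma on $f+g>c$ on a set of measure at least $\mu(R)/2$. The gap is in the step you flag as the main obstacle.

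The condition you propose is $d_{S(t)}(s_j(t)+\theta(u(t),v(t)))\ge\theta/k$ for \emph{all} rational $\theta\in(0,\lambda)$. It is not implied by Treiman's emptiness at a nearby point $z'$. The $1$-Lipschitz estimate only gives $d_{S(t)}(s_j(t)+\theta w)\ge 2\theta/k-\norm{s_j(t)-z'}$. A fixed error $\norm{s_j(t)-z'}>0$ is absorbed by the margin $\theta/k$ only when $\theta\ge k\norm{s_j(t)-z'}$, never as $\theta\to 0$, and halving $\varepsilon_0$ does not change this.

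Here is a concrete failure. Take $T=[0,1]$, $Y=Z=\RR$, $S(t)\equiv\RR\times(-\infty,0]$, $(x,y)=0$, $(u,v)\equiv(0,1)$, and a Castaing representation enumerating $\QQ\times(\QQ\cap(-\infty,0))$. Then $N=T$. But every $s_j(t)$ satisfies $s_j(t)+\theta(0,1)\in S(t)$ for all small $\theta>0$, so all your pieces are empty and do not cover $N$. The measurability of $N_k$ (needed to extract $R$) and of $z'(\cdot),\lambda(\cdot)$ rests on this step, so the proof does not go through as proposed.

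The missing idea follows from the fact that your contradiction uses only one step $\eta$. The Castaing index must be allowed to depend on the step, and the base point must be patched only after the step is fixed. This is what the paper does:
\begin{enumerate}[(1)]
\item $B_m$ collects those $t$ for which, for every $N$, there is $L$ such that for each $\ell\ge L$ some $j=j(\ell)$ satisfies $\norm{s_j(t)-(x(t),y(t))}<\frac{1}{2N}$ and $d_{S(t)}(s_j(t)+\frac{1}{\ell}(u(t),v(t)))>\frac{2}{m\ell}$. The approximation precision $\frac{1}{m\ell}$ of the Treiman point is tied to $\ell$.
\item After $\rho$, $\delta$, $\Lambda$ and $N_0$ are fixed, it picks $\ell_0$ with $1/\ell_0<\Lambda$ and $\mu(R_{\ell_0})>\mu(B_{m_0})/2$.
\item Only then does it assemble $(\widehat{x},\widehat{y})$ from the pieces $F_j$ adapted to the single step $1/\ell_0$.
\end{enumerate}

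Alternatively, you can keep your ``one witness for a whole cone of steps'' strategy by selecting the exact witness instead of a Castaing approximant. Consider the set of $(t,z,\lambda)$ with $z\in S(t)$, $\norm{z-(x(t),y(t))}\le c$, $\lambda\in\QQ\cap(0,\infty)$, and $d_{S(t)}(z+\theta(u(t),v(t)))\ge\theta/k$ for all rational $\theta\in(0,\lambda)$. This set is product-measurable. Completeness of $\mu$, the measurable projection theorem and Aumann's selection theorem then give measurability of $N_k$ and measurable witnesses $z'(\cdot),\lambda(\cdot)$. That machinery is absent from your proposal, and the Castaing shortcut as written is false.
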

	\begin{proof}
		Let $(u,v) \in \CT_{\Sel_{p,r}(S)}(x,y)$ and assume that there is a measurable $E \subset T$ with $\mu(E)>0$ such that $(u(t),v(t)) \notin \CT_{S(t)}(x(t),y(t))$ for each $t \in E.$ Using $\sigma$-finiteness, after passing to a suitable subset, we can assume $0 < \mu(E) < \infty.$ Fix a Castaing representation $\{s_j\}_{j=1}^\infty$ of $S$, and after modification on a null set assume
		$$
		s_j(t) \in S(t), \hspace{0.2cm} S(t) = \cl{ \{s_j(t) : j \in \NN \} }, \hspace{0.2cm} \forall t \in T.
		$$
		
		For each $m \in \NN$ define 
		\begin{equation*}
			B_{m}:=\left\{t \in E : \begin{array}{l} \forall N \in \NN \hspace{0.1cm} \exists L \in \NN \hspace{0.1cm} \forall \ell \geq L \hspace{0.1cm} \exists j \in \NN  \\ \norm{s_j(t)-(x(t),y(t))} < \frac{1}{2N} \\ d_{S(t)}\left( s_j(t) + \frac{1}{\ell} (u(t),v(t)) \right) > \frac{2}{m \ell} \end{array} \right\}.
		\end{equation*}
		The sets $B_m$ are measurable since
		\begin{align*}
		B_{m} &= E \cap \bigcap_{N=1}^\infty \bigcup_{L=1}^{\infty} \bigcap_{\ell = L}^{\infty} \bigcup_{j=1}^{\infty} \left\{t \in T  : \begin{array}{l} \norm{s_j(t) - (x(t),y(t)} < \frac{1}{2N} \\ \hspace{0.1cm} d_{S(t)}\left(s_j(t) + \frac{1}{\ell} (u(t),v(t)) \right) > \frac{2}{m \ell} \end{array}  \right\} \\
		&= E \cap \bigcap_{N=1}^\infty \liminf_{\ell \to \infty} \bigcup_{j=1}^{\infty} \left\{t \in T  : \begin{array}{l} \norm{s_j(t) - (x(t),y(t)} < \frac{1}{2N} \\ \hspace{0.1cm} d_{S(t)}\left(s_j(t) + \frac{1}{\ell} (u(t),v(t)) \right) > \frac{2}{m \ell} \end{array}  \right\}.
		\end{align*}
		and $t \mapsto \norm{s_j(t) - (x(t),y(t))},$ and $t \mapsto d_{S(t)}\left( s_j(t) + \frac{1}{\ell} (u(t),v(t)) \right)$ are measurable functions.
		Notice that
		\begin{equation} \label{eq:5-19}
			E = \bigcup_{m=1}^\infty B_m.
		\end{equation}
		Let $t \in E.$ Then $(u(t),v(t))$ is not a Clarke tangent at $(x(t),y(t)).$ By the Treiman's lemma \cite{TreimanLemma}, there exists $\varepsilon_t>0$ such that for every $\delta>0$ there exists $z \in S(t)$ and $\lambda>0$ satisfying
		\begin{equation} \label{eq:5-20}
			\norm{z - (x(t),y(t))} < \delta, \hspace{0.2cm} d_{S(t)}\left( z + \eta  (u(t),v(t)) \right) \geq \varepsilon_t \eta, \hspace{0.2cm} \forall \eta \in [0,\lambda].
		\end{equation}
		Choose $m \in \NN$ so large that $\frac{4}{m} < \varepsilon_t.$ Fix arbitrary $N \in \NN$ and apply (\ref{eq:5-20}) with $\delta = \frac{1}{4N}.$ Let $z$ and $\lambda$ be the corresponding points obtained in that way. Choose $L \in \NN$ so large that $\frac{1}{L} < \lambda.$ For every $\ell \geq L,$ the density of the Castaing representation allows us to choose an index  $j=j(\ell)$ such that
		$$
		\norm{s_j(t) - z} < \min \left\{ \frac{1}{4N}, \frac{1}{m \ell}\right\}.
		$$
		Then $\norm{s_j(t) - (x(t),y(t))} < \frac{1}{2N},$ and using $1$-Lipschitzness of the distance-function
		\begin{align*}
			d_{S(t)}\left( s_j(t) + \frac{1}{\ell}(u(t),v(t)) \right) &\geq d_{S(t)} \left( z + \frac{1}{\ell} (u(t),v(t)) \right) - \norm{s_j(t) - z} \\
			&>\frac{\varepsilon_t}{\ell} - \frac{1}{m \ell} \\
			&> \frac{4}{m \ell} - \frac{1}{m \ell}\\
			&> \frac{2}{m \ell}.
		\end{align*}
		Thus for the fixed $N,$ there exists $j=j(\ell)$ satisfying the two conditions in the definition of $B_m.$ Since $N$ was arbitrary we conclude $t \in B_m$ and so (\ref{eq:5-19}) holds.
		
		There exists $m_0 \in \NN$ such that $0< \mu(B_{m_0}) < \infty.$
		Set
		\begin{align*}
			M_{B_{m_0}}:&=\min\left\{ \vartheta_p \left( \frac{\mu(B_{m_0})}{4}  \right) , \vartheta_r \left( \frac{\mu(B_{m_0})}{4} \right) \right\} >0 \\
			\rho:&= \frac{M_{B_{m_0}}}{8 m_0}>0
		\end{align*}
		Since $(u,v)$ is a Clarke tangent to $\Sel_{p,r}(S)$ at $(x,y),$ for this $\rho>0$ there exists $\delta,\Lambda>0$ such that
		\begin{equation} \label{eq:5-27}
			\Sel_{p,r}(S) \cap [ z + \eta \left( (u,v) + \rho \clBB_{L^p \times L^r} \right) ] \neq \emptyset
		\end{equation}
		for every $z \in \Sel_{p,r}(S) \cap \left( (x,y) + \delta \clBB_{L^p \times L^r} \right)$ and every $\eta \in (0,\Lambda].$
		Choose $N_0 \in \NN$ so large that
		$$
		\frac{1}{N_0} \left( \vartheta_p(\mu(E)) + \vartheta_r (\mu(E))\right) < \delta.
		$$
		For each $\ell \in \NN$ define
		\begin{equation} \label{eq:5-29}
			R_{\ell}:= \left\{ t \in B_{m_0} :  \forall k \geq \ell \hspace{0.1cm} \exists j \in \NN, \begin{array}{l} \norm{s_j(t) - (x(t),y(t))} < \frac{1}{2N_0} \\
			d_{S(t)} \left( s_j(t) + \frac{1}{k} (u(t), v(t)) \right) > \frac{2}{m_0 k}  \end{array} \right\}.
		\end{equation}
		The sets $R_{\ell}$ are measurable, by the same argument as for the sets $B_m,$ and are increasing $R_{\ell} \subset R_{\ell+ 1}.$ Moreoever by the definition of $B_{m_0}$ with $N:=N_0,$ every $t \in B_{m_0}$ belongs to $R_{\ell}$ for all sufficiently large $\ell.$ Therefore
		$$
		R_{\ell} \nearrow B_{m_0}, \hspace{0.5cm} \mu(R_{\ell}) \nearrow \mu(B_{m_0})>0, \hspace{0.2cm} \ell \to \infty.
		$$
		Therefore we can choose $\ell_0 \in \NN$ so large that
		$$
		\frac{1}{\ell_{0}}< \Lambda, \hspace{0.5cm} \mu(R_{\ell_0}) > \frac{\mu(B_{m_0})}{2}.
		$$
		For $j \in \NN,$ define
		\begin{equation*}
			F_j := \left\{ t \in R_{\ell_0} : \begin{array}{l} \norm{s_j(t) - (x(t),y(t))} \leq \frac{1}{N_0} \\ d_{S(t)}\left(s_j(t) + \frac{1}{\ell_0}(u(t),v(t)) \right) > \frac{1}{m_0 \ell_0} \end{array} \right\}.
		\end{equation*}
		Clearly each $F_j$ is measurable and 
		$$R_{\ell_0} \subset \bigcup_{j=1}^\infty F_j.$$
		Now disjointify the family by setting
		$$
		\widehat{F}_1 :=F_1, \hspace{0.2cm} \widehat{F}_j :=F_j \setminus \bigcup_{i < j} F_i, \hspace{0.2cm} j \geq 2,
		$$
		and define the patched selector
		$$
		(\widehat{x}(t),\widehat{y}(t)) = \ind_{T \setminus R_{\ell_0}}(t) . (x(t),y(t)) + \sum_{j=1}^\infty \ind_{\widehat{F}_j}(t).s_j(t), \hspace{0.2cm} t \in T.
		$$
		Then $(\widehat{x},\widehat{y}) \in \Sel_{p,r}(S).$ Moreover, for $t \in R_{\ell_0}$
		$$
		\norm{(\widehat{x}(t),\widehat{y}(t)) - (x(t),y(t))} < \frac{1}{N_0}
		$$
		and therefore
		\begin{align*}
			\norm{ (\widehat{x},\widehat{y}) - (x,y)}_{p,r} &\leq \frac{1}{N_0} \left( \vartheta_p(\mu(R_{\ell_0}) + \vartheta_r(\mu(R_{\ell_0}) )  \right) \\
			&\leq \frac{1}{N_0} \left( \vartheta_p (\mu(E)) + \vartheta_r(\mu(E)) \right) < \delta.
		\end{align*}
		Thus $(\widehat{x},\widehat{y})$ is an admissible nearby base point in the sense of (\ref{eq:5-27}). 
		
		Let $(a,b) \in \Sel_{p,r}(S)$ be arbitrary. If $t \in R_{\ell_0},$ then $t \in \widehat{F}_{j} \subset F_j$ for a unique $j,$ and $s_j(t)= (\widehat{x}(t),\widehat{y}(t)).$ For any $c$ with $\norm{c}_{Y \times Z} \leq 1/2m_0$, the $1$-Lipschitzness of the distance function and the definition of $F_j$ give
		\begin{align*}
			d_{S(t)}\left( s_j(t) + \frac{1}{\ell_0}( (u(t),v(t)) + c ) \right) &\geq d_{S(t)}\left( s_j(t) + \frac{1}{\ell_0} (u(t),v(t)) \right) - \frac{\norm{c}_{Y \times Z}}{\ell_0} \\ 
			&> \frac{1}{m_0 \ell_0} - \frac{1}{2m_0 \ell_0} = \frac{1}{2m_0 \ell_0} >0.
		\end{align*}
		Hence
		$$
		S(t) \cap \left[s_j(t) + \frac{1}{\ell_0} \left( (u(t),v(t)) + \frac{1}{2m_0} \clBB_{Y \times Z} \right) \right] = \emptyset.
		$$
		Since $(a(t),b(t)) \in S(t)$ and $s_j(t)=(\widehat{x}(t),\widehat{y}(t))$ it follows that
		$$
		(a(t),b(t)) \notin (\widehat{x}(t),\widehat{y}(t)) + \frac{1}{\ell_0} \left( (u(t),v(t)) + \frac{1}{2m_0} \clBB_{Y \times Z}\right).
		$$
		Consequently for every $t \in R_{\ell_0}$ it holds
		$$
		\norm{\frac{a(t) - \widehat{x}(t)}{1/\ell_0} - u(t)}_{Y} + \norm{\frac{b(t) - \widehat{y}(t)}{1/\ell_0} -v(t) }_{Z}  > \frac{1}{2m_0}
		$$
		Now using Lemma \ref{lemma:sum-of-measurable-functions} we can estimate
		\begin{align*}
			\norm{\frac{a - \widehat{x}}{1/\ell_0} - u}_{L^p(R_{\ell_0})} &+ \norm{\frac{b - \widehat{y}}{1/\ell_0} -v }_{L^r(R_{\ell_0})}  \\
			&\geq \frac{1}{4m_0} \min\left\{ \vartheta_p \left( \frac{\mu(R_{\ell_0})}{2} \right), \vartheta_r \left( \frac{\mu(R_{\ell_0})}{2} \right) \right\} \\
			&\geq \frac{1}{4m_0} \min\left\{ \vartheta_p \left( \frac{\mu(B_{m_0})}{4} \right), \vartheta_r \left( \frac{\mu(B_{m_0})}{4} \right) \right\} \\
			 &= \frac{M_{B_{m_0}}}{4m_0} = 2 \rho.
		\end{align*}
		Since $(a,b)$ was arbitrary, the last estimate yields that for every $(a,b) \in \Sel_{p,r}(S)$ it holds
		$$
		\norm{ (a,b) - (\widehat{x},\widehat{y}) - \frac{1}{\ell_0} (u,v) }_{p,r} \geq \frac{2\rho}{ \ell_0}.
		$$
		Therefore
		$$
		\Sel_{p,r}(S) \cap [ (\widehat{x},\widehat{y}) + \frac{1}{\ell_0} \left( (u,v) + \rho \clBB_{L^p \times L^r} \right) ] = \emptyset.
		$$
		But $1/\ell_0 < \Lambda$ and so we have obtained a contradiction with (\ref{eq:5-27}) the fact that $(u,v)$ is Clarke tangent to $\Sel_{p,r}(S)$ at $(x,y)$ and $(\widehat{x},\widehat{y})$ is a constructed feasible nearby base point.
	\end{proof}
	
	 In particular when we take one of the variables to be zero we get  the following. 
	\begin{corollary} \label{th:Lp-to-pointwise}
		Let $(T,\Sigma,\mu)$ be a complete, $\sigma$-finite measure space, let $Z$ be a separable Banach space, $1 \leq p \leq \infty$ and $S : T \rightrightarrows Z$ be a measurable multivalued mapping with nonempty closed values. If $x \in \Hp(S),$ then
		\begin{enumerate}[(a)]
			\item $
			\left\{ v \in L^p ( T,Z) : v(t) \in \CT_{S(t)}(x(t)) \hspace{0.1cm} \text{a.e.}\right\} \supset \CT_{\Hp(S)}(x);
			$
			
			\item if aditionally $p <\infty,$ then $\left\{ v \in L^p (T,Z) : v(t) \in \CT_{S(t)}(x(t)) \hspace{0.1cm} \text{a.e.}\right\} = \CT_{\Hp(S)}(x).$
		\end{enumerate}
	\end{corollary}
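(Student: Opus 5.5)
The plan is to obtain Corollary \ref{th:Lp-to-pointwise} by specializing the product results to a trivial second factor. Take $Y:=Z$ and let the second factor be the zero space $\{0\}$, which is a separable Banach space. Choose the second exponent $r:=1$ (or any finite $r$), and define $\widetilde{S}(t):=S(t)\times\{0\}$. This $\widetilde{S}$ is measurable with nonempty closed values: a Castaing representation of $S$ paired with the zero function is a Castaing representation of $\widetilde S$. The map $x\mapsto (x,0)$ is then an isometric bijection of $L^p(T,Z)$ onto $L^p(T,Z)\times L^r(T,\{0\})$ with the norm $\norm{\cdot}_{p,r}$, and it carries $\Hp(S)$ onto $\Sel_{p,r}(\widetilde S)$. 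Since Clarke tangent cones are invariant under isometric isomorphisms, we get $\CT_{\Sel_{p,r}(\widetilde S)}(x,0)=\CT_{\Hp(S)}(x)\times\{0\}$.

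On the pointwise side, I will check that $\CT_{S(t)\times\{0\}}(x(t),0)=\CT_{S(t)}(x(t))\times\{0\}$. This is immediate from the definition, because $Z\times\{0\}$ is isometric to $Z$ under the sum norm. It follows that
$$\Sel_{p,r}\{t\mapsto \CT_{\widetilde S(t)}(x(t),0)\}=\Sel_p\{t\mapsto \CT_{S(t)}(x(t))\}\times\{0\}.$$

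For part (a), valid for every $p\in[1,\infty]$, I apply Theorem \ref{theorem:productMainTheorem-global-to-pointwise-p-r}, which imposes no restriction on the exponents, and then transport the result back through the identifications above. For part (b), where $p<\infty$ and both exponents are finite, the reverse inclusion comes from the Corollary following the finite-exponent Proposition (the pointwise-to-global inclusion). Combining the two inclusions gives equality.

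The only point that needs care is the degenerate factor, and I do not expect a real obstacle there. The previous results never require $Z\neq\{0\}$, and all the measurability statements hold trivially for the zero component. If one prefers to avoid the zero space, an alternative is to take $Y=Z$ with $S$ placed in the first coordinate and the second coordinate identically zero, but the argument is the same.
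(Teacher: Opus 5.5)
Your proposal is correct and follows the paper's route. The paper obtains this corollary simply by taking one of the two Lebesgue variables to be zero, using Theorem~\ref{theorem:productMainTheorem-global-to-pointwise-p-r} for (a) and the finite-exponent pointwise-to-global corollary for (b). Your explicit choice of a finite exponent $r$ for the trivial factor is what makes that corollary applicable when $p<\infty$, and your identifications of the global and pointwise Clarke cones under $x\mapsto(x,0)$ are the routine details the paper leaves implicit.
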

	
	Let us remind that for $M>0, \hspace{0.1cm} (x,y) \in \Sel_{\infty,1}(S)$ we denote 
	$$\mathcal{U}_M:=\left\{ u \in M.\clBB_{L^\infty} : \exists v \in L^1(T,Z), \hspace{0.1cm} (u(t),v(t)) \in \CT_{S(t)}(x(t),y(t)) \hspace{0.1cm} \text{a.e.}  \right\}.$$
	Then $\Sel_{\infty,1}(S)$ satisfies the variational condition (\textbf{V}) at $(x,y)$ if for every $M>0, \hspace{0.1cm} \Sel_{\infty,1}(S)$ is $(\{0\} \times K)$-correctable along $(\mathcal{U}_M \times \{0\})$ at $(x,y)$ for some equi-integrable $K \subset L^1(T,Y).$
	\begin{proposition}
		Let $(T,\Sigma,\mu)$ be a complete, $\sigma$-finite measure space, let $Y,Z$ be separable Banach spaces, let $S : T \rightrightarrows Y \times Z$ be measurable with nonempty closed values and $(x,y) \in \Sel_{\infty,1}(S).$
		If $\Sel_{\infty,1}(S)$ satisfies the variational condition $(\textbf{V})$ at $(x,y),$ then
		$$
		\Sel_{\infty,1} \left\{ t \mapsto \CT_{S(t)}(x(t),y(t))\right\} = \CT_{\Sel_{\infty,1}(S)}(x,y). 
		$$
	\end{proposition}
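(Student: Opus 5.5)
The inclusion ``$\supset$'' is the case $(p,r)=(\infty,1)$ of Theorem \ref{theorem:productMainTheorem-global-to-pointwise-p-r}, so only the pointwise-to-global inclusion needs proof. Fix $(u,v)$ with $u\in L^\infty(T,Y)$, $v\in L^1(T,Z)$ and $(u(t),v(t))\in\CT_{S(t)}(x(t),y(t))$ a.e.; we show $(u,v)\in\CT_{\Sel_{\infty,1}(S)}(x,y)$. We apply Theorem \ref{theorem:osnoven-rezultat-potochovo-kum-globalno} with the singleton multifunction $D(t):=\{(u(t),v(t))\}$. Its values are uniform tangent sets a.e., since a single Clarke tangent vector is trivially a uniform tangent set. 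The exponents $(\infty,1)$ include a finite one. It then remains to verify hypothesis (b): $\Sel_{\infty,1}(S)$ must be $R$-correctable along $\{(u,v)\}$ at $(x,y)$ for some equi-integrable $R\subset L^\infty(T,Y)\times L^1(T,Z)$.

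To obtain the correction, set $M:=\norm{u}_{L^\infty}$. Then $u\in\mathcal{U}_M$, with witness $v$. Condition (\textbf{V}) gives $\delta,\lambda>0$ and an equi-integrable $K\subset L^1(T,Z)$ such that for every $(a,b)\in\Sel_{\infty,1}(S)$ near $(x,y)$ and every $\eta\in[0,\lambda]$ there is $k_\eta\in K$ with
$$(a+\eta u,\; b-\eta k_\eta)\in\Sel_{\infty,1}(S).$$
We want $(a,b)+\eta(u,v)\in \Sel_{\infty,1}(S)-\eta R$, so we take
$$R:=\{0\}\times(K-v)=\{(0,k-v):k\in K\}.$$
Indeed, $(a+\eta u,b-\eta k_\eta)=(a,b)+\eta(u,v)-\eta(0,v+k_\eta)$, so the correction vector is $-(0,k_\eta+v)$. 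The correct choice is therefore $R:=\{0\}\times(K+v)$, with signs matched to the definition $H\cap(x_0+\delta\clBB)+\eta V\subset H-\eta R$.

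Equi-integrability of $R$ in the product sense follows from that of $K$ together with absolute continuity of the single integrable function $\norm{v(\cdot)}$. For $A_n\searrow\emptyset$,
$$\sup_{k\in K}\norm{\ind_{A_n}(k+v)}_{L^1}\le\sup_{k\in K}\norm{\ind_{A_n}k}_{L^1}+\norm{\ind_{A_n}v}_{L^1}\to0,$$
and the $L^\infty$-component is identically $0$. Theorem \ref{theorem:osnoven-rezultat-potochovo-kum-globalno} then shows that $\{(u,v)\}$ is a uniform tangent set to $\Sel_{\infty,1}(S)$ at $(x,y)$, that is, $(u,v)\in\CT_{\Sel_{\infty,1}(S)}(x,y)$.

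The main obstacle is conceptual rather than computational. The $L^\infty$-component cannot be corrected by any nontrivial equi-integrable set, because in $L^\infty$ the only equi-integrable set is $\{0\}$. That is exactly why (\textbf{V}) is phrased with zero correction in the first variable, and all of the correction is loaded onto the $L^1$-variable. The one point to check carefully is the sign and shift bookkeeping: (\textbf{V}) moves only the $Y$-component by $\eta u$, and the missing $\eta v$ is absorbed into the $L^1$ correction, which keeps $R$ equi-integrable. The endpoint $\eta=0$ is trivial.
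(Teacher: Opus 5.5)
Your route is the paper's: Theorem \ref{theorem:productMainTheorem-global-to-pointwise-p-r} gives one inclusion, and Theorem \ref{theorem:osnoven-rezultat-potochovo-kum-globalno} with $D(t)=\{(u(t),v(t))\}$ and a correction set obtained by shifting the $K$ from (\textbf{V}) by $v$ gives the other. However, the sign bookkeeping, which you yourself single out as the delicate point, is wrong as written, and the $R$ you finally choose does not work.

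By definition, $h+\eta w\in H-\eta R$ means $h+\eta(w+r)\in H$ for some $r\in R$. This is exactly how correctability is used in the proof of Theorem \ref{theorem:osnoven-rezultat-potochovo-kum-globalno}. Hence (\textbf{V}) gives $(a+\eta u,\,b+\eta k_\eta)\in\Sel_{\infty,1}(S)$ for some $k_\eta\in K$, not $(a+\eta u,\,b-\eta k_\eta)$. Consequently
$$(a,b)+\eta(u,v)=(a+\eta u,\,b+\eta k_\eta)-\eta\,(0,\,k_\eta-v)\in\Sel_{\infty,1}(S)-\eta\bigl(\{0\}\times(K-v)\bigr),$$
so the correct choice is $R=\{0\}\times(K-v)$. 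That is the set you first wrote down and then discarded, and it is the one the paper uses.

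Your replacement $\{0\}\times(K+v)$ is off in two places. Even from your incorrect formula, the correction would lie in $\{0\}\times(-K-v)$. And with $R=\{0\}\times(K+v)$, the required inclusion would amount to $(a+\eta u,\,b+2\eta v+\eta k)\in\Sel_{\infty,1}(S)$ for some $k\in K$, which (\textbf{V}) does not provide.

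The repair costs nothing. Equi-integrability is invariant under $K\mapsto\pm K\pm v$, so your triangle-inequality estimate applies verbatim to $\{0\}\times(K-v)$. That estimate is actually a better justification than the paper's, which asserts that $K$ is relatively weakly compact even though (\textbf{V}) only provides an equi-integrable $K$.

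One further small fix: (\textbf{V}) is only assumed for $M>0$, so $M:=\norm{u}_{L^\infty}$ is not admissible when $u=0$. Take $M>\norm{u}_{L^\infty}$, as the paper does.
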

	\begin{proof}
		Theorem \ref{theorem:productMainTheorem-global-to-pointwise-p-r} gives the global-to-pointwise inclusion unconditionally. It's suffices to prove that under (\textbf{V}), the pointwise-to-global inclusion holds at $(x,y).$ Take $(u,v) \in \Sel_{\infty,1} \left\{ t \mapsto \CT_{S(t)}(x(t),y(t)) \right\}$ and $M > \norm{u}_{L^\infty}.$ Then by the definition of $\mathcal{U}_{M}, \hspace{0.1cm} u \in \mathcal{U}_{M}.$ Condition (\textbf{V}) therefore gives $\delta,\lambda>0$ and a relatively weakly compact $K \subset L^1(T,Z)$ such that for every $\eta \in [0,\lambda]$
		$$
		\Sel_{\infty,1}(S) \cap \left( (x,y) + \delta M \clBB_{L^\infty \times L^1} \right) + \eta (u,0) \subset \Sel_{\infty,1}(S) - \eta (\{0\} \times K).
		$$
		Adding $\eta(0,v)$ to both sides we get that for every $\eta \in [0,\lambda]$
		$$
		\Sel_{\infty,1}(S) \cap \left( (x,y) + \delta M \clBB_{L^\infty \times L^1} \right) + \eta (u,v) \subset \Sel_{\infty,1}(S) - \eta (\{0\} \times (K-v)).
		$$
		Hence $\Sel_{\infty,1}(S)$ is $\{0\} \times (K-v)$-correctable along $\{(u,v)\}$ at $(x,y).$ But $K$ is relative weak compact in $L^1(T,Z),$ and hence so is $K-v.$ Consequently $\{0\} \times (K-v)$ is an equi-integrable subset of $L^\infty(T,Y) \times L^1(T,Z).$ Finall, put $D(t):=\{(u(t),v(t))\}.$ Then $D(t)$ is uniform tangent set to $S(t)$ at $(x(t),y(t))$ a.e. Then Theorem \ref{theorem:osnoven-rezultat-potochovo-kum-globalno} applies and shows that the set $\Sel_{\infty,1}(D)=\{(u,v)\}$ is uniform tangent to $\Sel_{\infty,1}(S)$ at $(x,y).$ That is $(u,v) \in \CT_{\Sel_{\infty,1}(S)}(x,y).$
	\end{proof}
	
	\begin{corollary}
		Let $(T,\Sigma,\mu)$ be a complete, $\sigma$-finite measure space, let $Y,Z$ be separable Banach spaces and $F : T \times Y \rightrightarrows Z$ be measurable in $t \in T$ and integrably Lipschitz in $y \in Y.$ That is there exist $\ell \in L^1(T)$ such that 
		$$F(t,y) \subset F(t,z) + \ell(t) \norm{y-z}\clBB_{Z}, \hspace{0.2cm} \text{for any $t \in T, y,z \in Y.$}$$
		If $(x,y) \in L^\infty(T,Y) \times L^1(T,Z)$ such that $y(t) \in F(t,x(t))$ a.e. Then
		$$
		\CT_{\Sel_{\infty,1} \left\{t \mapsto \Gr F(t,\cdot)\right\} }(x,y)=\Sel_{\infty,1} \left\{t \mapsto \CT_{\Gr F(t,\cdot)}(x(t),y(t))\right\}.
		$$
	\end{corollary}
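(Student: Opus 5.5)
The plan is to verify the variational condition (\textbf{V}) for $S(t):=\Gr F(t,\cdot)$ at $(x,y)$, and then apply the preceding Proposition, which gives the equality of $\CT_{\Sel_{\infty,1}(S)}(x,y)$ with the $\Sel_{\infty,1}$-selections of the pointwise Clarke cones. Rather than go through (\textbf{Vstrong}), I will check (\textbf{V}) directly. The integrable Lipschitz property will give a correction for every size $M$ of the $L^\infty$-perturbation, with $\delta,\lambda$ arbitrary. Throughout I assume, as is implicit in the statement, that $F$ has nonempty closed values, so that $S$ has closed values. The hypotheses $x\in L^\infty$, $y\in L^1$, $y(t)\in F(t,x(t))$ a.e.\ say exactly that $(x,y)\in\Sel_{\infty,1}(S)$.

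\emph{Measurability.} I first check that $S(t)=\Gr F(t,\cdot)$ is a measurable multifunction. I also check that for measurable $a:T\to Y$ the map $t\mapsto F(t,a(t))$ is measurable with closed values. The intended route is a Castaing-type argument: fix a countable dense set $\{y_k\}\subset Y$. The Lipschitz inclusion gives $d_{F(t,a(t))}(z)=\lim_j d_{F(t,y_{k_j})}(z)$ along $y_{k_j}\to a(t)$, with error at most $\ell(t)\norm{y_{k_j}-a(t)}$. Hence the distance functions are measurable in $t$, which characterizes measurability for closed-valued multifunctions in separable spaces. The same estimate shows that $(t,y,z)\mapsto d_{F(t,y)}(z)$ is Carathéodory-like, and therefore $\Gr S$ is $\Sigma\otimes\mathcal{B}(Y\times Z)$-measurable. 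I expect this bookkeeping to be the only genuinely delicate point, because "measurable in $t$" has to be read in this sense.

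\emph{Correction.} Fix $M>0$, and take any $\delta,\lambda>0$. Let $(a,b)\in\Sel_{\infty,1}(S)$, $u\in\mathcal{U}_M$ (so $\norm{u}_{L^\infty}\le M$), and $\eta\in(0,\lambda]$.
\begin{enumerate}[(i)]
\item Since $b(t)\in F(t,a(t))$ and $\norm{\eta u(t)}\le\eta M$, the Lipschitz inclusion gives $d_{F(t,a(t)+\eta u(t))}(b(t))\le \eta M\ell(t)$.
\item The multifunction $t\mapsto F(t,a(t)+\eta u(t))\cap\bigl(b(t)+2\eta M\ell(t)\clBB_Z\bigr)$ is measurable with nonempty closed values. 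On $\{\ell=0\}$ nonemptiness uses closedness of $F(t,\cdot)$ and $b(t)\in F(t,a(t))$ directly.
\item Take a measurable selector $b'$ of this multifunction and set $k:=(b'-b)/\eta$. Then $\norm{k(t)}\le 2M\ell(t)$, so $k\in L^1$, $b'\in L^1$, and $(a+\eta u,b')\in\Sel_{\infty,1}(S)$.
\item This gives $(a,b)+\eta(u,0)=(a+\eta u,b')-\eta(0,k)$.
\end{enumerate}
With $K:=\{k\in L^1(T,Z):\norm{k(t)}\le 2M\ell(t)\text{ a.e.}\}$, which is equi-integrable since it is dominated by $2M\ell\in L^1$, this is exactly (\textbf{V}). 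The case $\eta=0$ is trivial.

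\emph{Conclusion.} The preceding Proposition now applies. Its proof uses $K$ only through equi-integrability of $K-v$, and $K-v$ is again dominated by $2M\ell+\norm{v(\cdot)}\in L^1$, hence equi-integrable. The Proposition yields
\[
\CT_{\Sel_{\infty,1}\{t\mapsto\Gr F(t,\cdot)\}}(x,y)=\Sel_{\infty,1}\left\{t\mapsto\CT_{\Gr F(t,\cdot)}(x(t),y(t))\right\}.
\]
The main obstacle is the measurability step, not the correction estimate, which is immediate from the Lipschitz inclusion.
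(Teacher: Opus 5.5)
Your proposal is correct and follows the paper's route. In both, the integrable Lipschitz inclusion gives, for each feasible $(a,b)$ and each $\eta$, a point of $F(t,a(t)+\eta u(t))$ within a multiple of $\eta\ell(t)$ of $b(t)$; this gives a correction set dominated by a multiple of $\ell$, after which the preceding Proposition applies. The differences are minor refinements. You verify (\textbf{V}) directly with $K$ scaled by $M$, whereas the paper verifies (\textbf{Vstrong}) for the unit ball. You also make explicit the measurability of $S$ and of the selector $b'$, and you note that only equi-integrability of $K-v$ is needed, not weak compactness; the paper leaves all of these implicit.
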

	\begin{proof}
		Set $S(t):=\Gr F(t,\cdot)$ and $K_{\ell}:=\{k \in L^1(T,Z) : \norm{k(t)}_{Z} \leq \ell(t) \hspace{0.1cm} \text{a.e.}\}.$ Clearly $K_{\ell}$ is equi-integrable. Let $(x,y) \in \Sel_{\infty,1}(S), u \in \clBB_{L^\infty}, \eta >0$ be arbitrary. Using
		$$
		y(t) \in F(t,x(t)) \subset F(t, x(t) + \eta u(t)) + \eta \ell(t) \clBB_{Z} \hspace{0.1cm} \text{a.e.,}
		$$
		there exist $z_{\eta}(t) \in F(t, x(t) + \eta u(t))$ with $\norm{z_{\eta}(t) - y(t)}_{Z} \leq \eta \ell(t)$ a.e. Then $k_{\eta}:=(z_{\eta} - y)/ \eta \in K_{\ell}$ and $(x+ \eta u, y + \eta k_{\eta}) \in \Sel_{\infty,1}(S).$ Therefore 
		$$(x+\eta u, y) \in \Sel_{\infty,1}(S) - \eta (\{0\} \times K_{\ell}).$$ 
		Consequently we proved that for any $\delta,\lambda>0$ and $\eta \in [0,\lambda]$ it holds
		$$
		\Sel_{\infty,1}(S) \cap \left( (x,y) + \delta \clBB_{L^\infty \times L^1} \right) + \eta (\clBB_{L^\infty} \times \{0\}) \subset \Sel_{\infty,1}(S) - \eta (\{0\} \times K_{\ell}).
		$$
	\end{proof}
	
	Let us now start with some immediate applications.
	
	\section{Applications}
	
	If $1 \leq p < \infty,$ the conjugate exponent of $p$ is then $q$ such that $q=\infty$ whenever $p=1$ and $\frac{1}{p} + \frac{1}{q}=1$ else. If, additionally, $Z$ is reflexive $Z^{*}$ has the Radon-Nikodym property, hence on a $\sigma$-finite measure space and $1 \leq p < \infty,$ we can identify $[L^p(T,Z)]^{*}$ and $L^q(T,Z^{*})$ under the usual integral pairing
	$$\left<\xi,v\right>:=\int_T \left< \xi(t),v(t)\right> \dd \mu(t).$$
	
	Then we immediately obtain the following.
	\begin{corollary}
		Let $(T,\Sigma,\mu)$ be a complete $\sigma$-finite measure space, let $Z$ be a separable reflexive Banach space, $1 \leq p < \infty$ and $S: T \rightrightarrows Z$ be a measurable multivalued mapping with nonempty closed values and $x \in \Hp(S).$ Then
		\begin{enumerate}[(a)]
			\item $
			\CN_{\Hp(S)}(x)=\{\xi \in L^q(T,Z^{*}) : \xi(t) \in \CN_{S(t)}(x(t)) \hspace{0.1cm} \text{a.e.}\};
			$
			
			\item $\widehat{\partial}d_{\Hp(S)}(x) \subset \{ \xi \in L^q (T,Z^{*}) : \norm{\xi}_{L^q} \leq 1, \hspace{0.1cm} \xi(t) \in \CN_{S(t)}(x(t)) \hspace{0.1cm} \text{a.e.}\}.$
		\end{enumerate}
	\end{corollary}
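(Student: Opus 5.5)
For (a), I would begin from Corollary~\ref{th:Lp-to-pointwise}(b), which gives $\CT_{\Hp(S)}(x)=\Sel_p\{t\mapsto \CT_{S(t)}(x(t))\}$, and pass to polars. By definition $\CN_{\Hp(S)}(x)$ is the polar of this cone in $[L^p(T,Z)]^{*}$. Under the stated assumptions this dual is identified with $L^q(T,Z^{*})$ via the integral pairing. So it suffices to show that, for $\xi\in L^q(T,Z^{*})$,
$$
\int_T \langle \xi(t),v(t)\rangle \,\dd\mu(t)\le 0 \ \ \text{for all } v\in \Sel_p\{t\mapsto \CT_{S(t)}(x(t))\}
\iff \xi(t)\in \CN_{S(t)}(x(t)) \ \text{a.e.}
$$
The implication ``$\Leftarrow$'' is immediate: the integrand is pointwise $\le 0$ a.e.

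For ``$\Rightarrow$'' I would argue by contradiction using decomposability. Let $C(t):=\CT_{S(t)}(x(t))$; by the measurability lemma of Section~3 this is a measurable multifunction with nonempty closed values (it contains $0$). Take a Castaing representation $\{c_n\}$ of $C$. If $\xi(t)\notin \CN_{S(t)}(x(t))=C(t)^{0}$ on a set of positive measure, then by density there is some $n$ with $\langle\xi(t),c_n(t)\rangle>0$ on a set $E$ of positive measure. Shrinking $E$ by $\sigma$-finiteness, I can also assume $\mu(E)<\infty$ and $\norm{c_n(t)}\le k$ on $E$. Then $v:=\ind_E c_n$ lies in $L^p$, and $v(t)\in C(t)$ a.e. because $0\in C(t)$. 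This gives $\int\langle\xi,v\rangle>0$, a contradiction. The only delicate point is this truncation, which keeps the test vector in $L^p$ while preserving tangency; it works because $C(t)$ is a cone containing $0$, so the cutoff $\ind_E c_n$ remains a selection.

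For (b), I would use the standard fact that for a closed set $C$ in a Banach space and $x\in C$, $\widehat{\partial} d_C(x)\subset \CN_C(x)\cap \clBB_{X^{*}}$. The norm bound holds because $d_C$ is $1$-Lipschitz. The inclusion in the normal cone holds because $d_C^{\circ}(x;v)\le 0$ for $v\in\CT_C(x)$: Clarke tangency gives $d_C(y+\eta v)\le d_C(y)+\varepsilon\eta$ for $y$ near $x$, and $C$ can be replaced by nearby points. Applying this with $C=\Hp(S)$ and combining with (a) and the isometric identification $\norm{\xi}_{[L^p]^{*}}=\norm{\xi}_{L^q}$ yields (b).
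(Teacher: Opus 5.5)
Your proposal is correct and takes essentially the same route as the paper. Part (a) comes from polarity applied to the $L^p$ tangent-cone formula, and the nontrivial inclusion is proved by contradiction with a truncated tangent selection $\ind_E w$ on a finite-measure set $E$. The only difference is how $w$ is obtained: you take it from a Castaing representation of $t\mapsto\CT_{S(t)}(x(t))$, while the paper selects it from $t\mapsto\{w\in\CT_{S(t)}(x(t))\cap\clBB_Z:\langle\xi(t),w\rangle\ge 1/n_0\}$. Your version has the small advantage of not needing the measurability of that constrained multifunction. For (b) the paper gives no separate argument, and your use of $\widehat{\partial}d_C(x)\subset\CN_C(x)\cap\clBB_{X^{*}}$ together with the isometric identification of $[L^p(T,Z)]^{*}$ with $L^q(T,Z^{*})$ is exactly what is implicitly intended.
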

	\begin{proof}
		The inclusion $\Hq(t \mapsto \CN_{S(t)}(x(t))) \subset \CN_{\Hp(S)}(x)$ follows immediately from the main result for the Clarke tangent cone. If $\xi \in L^q(T,Z^{*})$ with $\left<\xi(t),v(t)\right> \leq 0$ for every $v(t) \in \CT_{S(t)}(x(t))$ a.e. $t \in T.$ Then in light of Theorem \ref{thorem:Main-Lp-theorem} we get  $\left<\xi,v\right> \leq 0$ for every $v \in \CT_{\Hp(S)}(x).$
		
		Conversely. Pick $\xi \in \CN_{\Hp(S)}(x)$ and assume there exists measurable $A \subset T$ with $\mu(A)>0$ such that for every $t \in A$ there exists $w_t \in \CT_{S(t)}(x(t)) \cap \clBB_{Z}$ such that $$\left<\xi(t),w_t \right> > 0 .$$
		Thus $A=\bigcup_{n=1}^{\infty} A_n,$ where $A_n:=\left\{t \in T : \exists w_t \in \CT_{S(t)}(x(t)) \cap \clBB_{Z}, \hspace{0.1cm} \left<\xi(t),w_t \right> \geq \frac{1}{n}\right\}.$
		There exists possitive integer $n_0$ such that $\mu(A_{n_0})>0.$ Using $\sigma$-finiteness there exists measurable $E$ subset of $A_{n_0}$ with $0 < \mu(E) < \infty.$
		The multivalued mapping
		$$
		E \ni t \mapsto \left\{ w \in \CT_{S(t)}(x(t)) \cap \clBB_{Z} : \left<\xi(t),w \right> \geq \frac{1}{n_0} \right\}
		$$
		has nonempty measurable closed values on $E.$ Therefore it admits a measurable selector $w : E \to Z.$ Let $v:=\ind_{E}.w.$ Since $\mu(E)<\infty$ and $\norm{w(t)}\leq 1$ it holds $v \in L^p.$
		Further $v(t) \in \CT_{S(t)}(x(t))$ a.e. Thus from the proven theorem $v \in \CT_{\Hp(S)}(x).$ Consequently
		$$
		\left< \xi, v \right> = \int_E \left< \xi(t),w(t) \right> \dd\mu(t) \geq \frac{1}{n_0}.\mu(E) >0.
		$$
		Contrary to the fact that $\xi \in \CN_{\Hp(S)}(x).$
	\end{proof}
	
	\begin{corollary}[Lagrange multiplier rule]
		Let $(T,\Sigma,\mu)$ be a complete $\sigma$-finite measure space, let $Z$ be a separable reflexive Banach space, $1 \leq p < \infty$ and $S : T \rightrightarrows Z$ be a measurable multivalued mapping with nonempty closed values. Let $F : L^p(T,Z) \to \RR$ be locally Lipschitz and $\cl{x} \in \Hp(S)$ be a local solution of
		$$
		F(x) \to \min \hspace{0.5cm} \text{subject to} \hspace{0.5cm} x \in \Hp(S),
		$$
		then
		$$
		0 \in \widehat{\partial} F(\cl{x}) + \left\{\xi \in L^q(T,Z^{*}) : \xi(t) \in \CN_{S(t)}(\cl{x}(t)) \hspace{0.1cm} \text{a.e.} \right\}.
		$$
	\end{corollary}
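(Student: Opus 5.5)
The plan is to use the standard Clarke necessary condition for a locally Lipschitz function minimized over a closed set, and then replace the abstract normal cone by the pointwise description from the preceding corollary. First note that $\Hp(S)$ is closed in $L^p(T,Z)$: an $L^p$-convergent sequence has an a.e. convergent subsequence, and the values $S(t)$ are closed. Since $\cl{x}$ is a local minimizer of $F$ on $\Hp(S)$ and $F$ is locally Lipschitz near $\cl{x}$ with some constant $L$, Clarke's exact penalization principle shows that $\cl{x}$ is a local unconstrained minimizer of
$$
x \mapsto F(x) + L'\, d_{\Hp(S)}(x)
$$
for every $L' > L$. Fermat's rule for the Clarke subdifferential, combined with the sum rule for locally Lipschitz functions, then gives
$$
0 \in \widehat{\partial}F(\cl{x}) + L'\,\widehat{\partial} d_{\Hp(S)}(\cl{x}).
$$

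For the second step, part (b) of the preceding corollary places $\widehat{\partial} d_{\Hp(S)}(\cl{x})$ inside the set of $\xi \in L^q(T,Z^{*})$ with $\norm{\xi}_{L^q} \le 1$ and $\xi(t) \in \CN_{S(t)}(\cl{x}(t))$ a.e. Each $\CN_{S(t)}(\cl{x}(t))$ is a cone, so multiplying by $L'$ keeps the pointwise inclusion. This gives the required multiplier in
$$
\left\{\xi \in L^q(T,Z^{*}) : \xi(t) \in \CN_{S(t)}(\cl{x}(t)) \hspace{0.1cm} \text{a.e.}\right\}.
$$

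The main point that needs care is part (b) of the preceding corollary, because its proof was not written out. If it has to be justified, I would use the standard inclusion
$$
\widehat{\partial} d_C(\cl{x}) \subset \CN_C(\cl{x}) \cap \clBB_{X^{*}}
$$
for a closed set $C$, which holds because the Clarke directional derivative $d_C^{\circ}(\cl{x};v)$ vanishes for $v \in \CT_C(\cl{x})$. I would then apply part (a) of that corollary, using the identification $[L^p(T,Z)]^{*} = L^q(T,Z^{*})$ available since $Z$ is reflexive. Alternatively, one can skip the distance function entirely and use the inclusion $\widehat{\partial}$-Fermat form
$$
0 \in \widehat{\partial}F(\cl{x}) + \CN_{\Hp(S)}(\cl{x}),
$$
which is standard for a locally Lipschitz $F$ minimized over a closed set, and then apply part (a) directly.
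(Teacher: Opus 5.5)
Your argument is correct and is the derivation the paper intends. The paper states this corollary without proof, as an immediate consequence of the preceding normal-cone corollary: Clarke's necessary condition $0 \in \widehat{\partial}F(\cl{x}) + \CN_{\Hp(S)}(\cl{x})$ (or, equivalently, its exact-penalization form with $d_{\Hp(S)}$ and part (b)), combined with the pointwise formula in part (a). Your justification of part (b) via $\widehat{\partial} d_C(\cl{x}) \subset \CN_C(\cl{x}) \cap \clBB_{X^{*}}$ and part (a) correctly supplies the one step the paper leaves unwritten.
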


	A natural source of decomposable sets in product spaces $L^p \times L^r$ is provided by graphs of Nemytskii operators. Let $Y,Z$ be separable reflexive Banach spaces, let $(T,\Sigma,\mu)$ be a complete $\sigma$-finite measure space and let $f: T \times Z \to Y$ be  such that $t \mapsto \Gr f(t,\cdot)$ is a measurable multifunction with nonempty closed values. This is the case whenever $f$ is Car\'atheodory. That is measurable in $t$ and continuous in $z.$ Assume, in addition, that the Nemytskii operator of $f$
	$$
	N_f : L^p(T,Z) \to L^r(T,Y), \hspace{0.3cm} N_f(x)(t):=f(t,x(t))
	$$
	is well defined, for $1 \leq p,r < \infty.$ This is the case under suitable growth assumptions on $f.$
	Then the graph of $N_f$ is naturally decomposable in the product
	$$\Gr N_f =\Hpr \left\{ t \mapsto \Gr f(t,\cdot) \right\}.$$ Thus $\CT_{\Gr N_{f}}(x,N_f(x))=\Hpr \left\{ t \mapsto \CT_{\Gr f(t,\cdot)}(x(t),f(t,x(t))) \right\}.$
	More generally, the multivalued analogue gives the following.
	\begin{corollary}[The Clarke cones to the graph of multivalued Nemytskii operator] \label{cor:nemytskii-application}
		Let $(T,\Sigma,\mu)$ be a complete $\sigma$-finite measure space, let $Z,Y$ be separable reflexive Banach space, $1 \leq p,r < \infty$, let $F : T \times Z \rightrightarrows Y$ be a multifunction such that $t \mapsto \Gr F(t,\cdot)$ is measurable with nonempty closed values and $N_F : L^p(T,Z) \rightrightarrows L^r(T,Y)$ be its multivalued Nemytskii operator
		$$
		N_F(x):=\{v \in L^r(T,Y): v(t) \in F(t,x(t)) \hspace{0.1cm} \text{a.e.}\}.
		$$
		Therefore $\Gr N_F = \Hpr( t \mapsto \Gr F(t,\cdot))$ and if $(x,y)\in \Gr N_F$ it holds
		\begin{enumerate}[(a)]
			\item $\CT_{\Gr N_{F}}(x,y) = \Hpr\left( t \mapsto \CT_{\Gr F(t,\cdot)}(x(t), y(t)) \right);$
			
			\item $\CN_{\Gr N_{F}}(x,y) = \Hqs\left( t \mapsto \CN_{\Gr F(t,\cdot)}(x(t), y(t)) \right)$ where $q,s$ are the conjugate exponents of $p,r$ respectively.
		\end{enumerate}
				\end{corollary}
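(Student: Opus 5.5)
The identity $\Gr N_F = \Hpr\left( t \mapsto \Gr F(t,\cdot)\right)$ is a matter of unwinding definitions: $(x,y) \in \Gr N_F$ means $x \in L^p(T,Z)$, $y \in L^r(T,Y)$ and $y(t) \in F(t,x(t))$ a.e., which is exactly $(x(t),y(t)) \in \Gr F(t,\cdot)$ a.e. The multifunction $S(t):=\Gr F(t,\cdot)$ is measurable with nonempty closed values by hypothesis. Part (a) then follows from the two inclusions already proved for finite exponents. Theorem \ref{theorem:productMainTheorem-global-to-pointwise-p-r} gives $\CT_{\Hpr(S)}(x,y) \subset \Hpr\{t\mapsto \CT_{S(t)}(x(t),y(t))\}$ for all exponents. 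The corollary following the finite-exponent proposition (with $1\le p,r<\infty$) gives the reverse inclusion. The roles of $Y,Z$ are swapped relative to those statements, but this is only notation.

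For (b) I would imitate the proof of the normal-cone corollary in the one-variable case. Since $Z,Y$ are separable reflexive and $p,r<\infty$, the dual of $L^p(T,Z)\times L^r(T,Y)$, normed by $\norm{\cdot}_{p,r}$, is identified with $L^q(T,Z^{*})\times L^s(T,Y^{*})$ via the pairing $\int_T \left(\left<\xi,u\right>+\left<\zeta,v\right>\right)\dd\mu$. The choice of product norm only affects the dual norm, not the identification of the dual space.

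\emph{Inclusion $\supset$.} Take $(\xi,\zeta)$ with $(\xi(t),\zeta(t)) \in \CN_{S(t)}(x(t),y(t))$ a.e. For any $(u,v)$ in the Clarke tangent cone, part (a) gives $(u(t),v(t)) \in \CT_{S(t)}(x(t),y(t))$ a.e., so the integrand is $\le 0$ a.e. Integrating shows $(\xi,\zeta)$ is normal.

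\emph{Inclusion $\subset$.} Suppose $(\xi,\zeta) \in \CN_{\Gr N_F}(x,y)$ but the pointwise normality fails on a set of positive measure. Then for $t$ in that set there exists $w_t \in \CT_{S(t)}(x(t),y(t))\cap \clBB_{Z\times Y}$ with pairing $>0$. Split this set as in the earlier proof into sets $A_n$ on which the pairing is $\ge 1/n$, pick $n_0$ with $\mu(A_{n_0})>0$, and use $\sigma$-finiteness to pass to $E\subset A_{n_0}$ with $0<\mu(E)<\infty$. Choose a measurable selector $w$ on $E$ and set $(u,v):=\ind_E w$. This lies in $L^p\times L^r$ because it is bounded with finite-measure support, and it is a pointwise Clarke tangent selection. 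By (a) it belongs to the Clarke tangent cone of $\Gr N_F$, yet its pairing with $(\xi,\zeta)$ is at least $\mu(E)/n_0>0$, a contradiction.

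The only delicate step is measurability in the $\subset$ argument. One needs $E\ni t\mapsto \{w\in \CT_{S(t)}(x(t),y(t))\cap\clBB : \left<(\xi(t),\zeta(t)),w\right>\ge 1/n_0\}$ to be measurable with closed values; the sets $A_n$ are measurable for the same reason. I would obtain this from the measurability lemma for $t\mapsto \CT_{S(t)}(x(t),y(t))$ stated in the preliminaries, intersected with a ball and a measurable closed half-space. Measurability of the intersection follows from the graph criterion (c) and the completeness of $\mu$. The Kuratowski–Ryll-Nardzewski (or Aumann) selection theorem then supplies $w$, exactly as in the one-variable corollary.
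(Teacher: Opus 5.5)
Your proposal is correct and takes the route the paper intends. The paper gives no separate proof: it identifies $\Gr N_F$ with $\Hpr(t\mapsto\Gr F(t,\cdot))$, gets (a) from Theorem \ref{theorem:productMainTheorem-global-to-pointwise-p-r} together with the finite-exponent corollary, and gets (b) from the same polarity and measurable-selection argument as the one-variable normal-cone corollary; you reproduce this in the product setting and spell out the measurability of the sets $A_n$ and of the selector more carefully than the paper does.
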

				
		As we have seen, in case atleast one of the exponents is infinite, similar positive result holds under equi-integrable correction assumption.

	\section{Multiplier rule for  nonconvex integral programs under pointwise constraints}
	%Lagrange multiplier rules and nonconvex integral functionals

	In this section we record some consequences of the pointwise tangent-cone formula for nonsmooth integral problems. Throughout the section let
	$$
	\varphi : T \times Z \to \RR \cup \{+\infty\}
	$$
	be a normal integrand. That is, $\varphi$ is measurable in $t \in T$ and proper lower semicontinuous in $u \in Z.$
	Denote
	$$
	\Hpone \{t \mapsto epi \varphi(t,\cdot) \} :=\{(w,a) \in L^p(T,Z) \times L^1(T) : (w(t),a(t)) \in epi \varphi(t,\cdot) \hspace{0.1cm} \text{a.e.}\}.
	$$
	
	Denote the upper-integral functional (see \cite{Rockafellar1976Integral} and \cite{RockafellarWets1998})
	$$
	I_{\varphi}[x]:=\int_{T}^{*} \varphi(t,x(t)) \dd \mu(t):=\inf \left\{ \int_T \sigma \dd \mu : \sigma \in L^1(T), \hspace{0.1cm} \sigma(t) \geq \varphi(t,x(t)) \hspace{0.1cm} \text{a.e.} \right\}
	$$
	and consider the problem of minimizing an integral functional with pointwise constraints
	
	(\textbf{IP}) \hspace{2cm} $I_{\varphi}[x] \to \min \hspace{0.5cm} \text{subject to \hspace{0.2cm} $x \in \Hp(S).$}$
	
	\begin{definition}
		Let $A$ be a closed subset of the Banach space $X$ and $x_0 \in A.$ It's said that $A$ is compactly epi-Lipschitz at $x_0,$ if there exists $\delta,\lambda>0$ and a compact set $K \subset X,$ such that
		$$
		A \cap (x_0 + \delta \clBB_{X}) + \eta \varepsilon \clBB_{X} \subset A - \eta K, \hspace{0.5cm} \forall \eta \in [0,\lambda].
		$$
	\end{definition}

	Compact epi-Lipschitzness for a closed set is automatically satisfied in a finite-dimensional space since we can correct with the same small closed ball, which perturbs the base point.
	
	We construct infinite-dimensional multipliers for ($\textbf{IP}$), by integrating measurable selections of pointwise multipliers of the integrand.
	
	\begin{lemma}[See \cite{Giner1995Local} and \cite{Giner1995Contrainte} ] 
		Let $(T,\Sigma,\mu)$ be a complete, $\sigma$-finite, nonatomic measure space. Let $1 \leq p < \infty$ and $Z$ be a separable Banach space and $S : T \rightrightarrows Z$ be measurable multivalued mapping with nonempty closed values.
		
		If $x_0 \in \Hp(S)$ is local solution for $\textbf{(IP)}$ with $I_{\varphi}[x_0] \in \RR,$ then $$\varphi(t,x_0(t)) = \min_{u \in S(t)} \varphi(t,u), \hspace{0.2cm} \text{ a.e. \hspace{0.1cm} $t \in T$}.$$
	\end{lemma}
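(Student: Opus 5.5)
We argue by contradiction, using the standard decomposability (needle-variation) argument together with the nonatomicity of $\mu$. Let $x_0$ be a local solution: there is $\rho>0$ with $I_{\varphi}[x_0]\le I_{\varphi}[x]$ for every $x\in\Hp(S)$ with $\norm{x-x_0}_{L^p}\le\rho$. Since $I_{\varphi}[x_0]\in\RR$, the function $t\mapsto\varphi(t,x_0(t))$ is bounded above by an integrable function. Define $m(t):=\inf_{u\in S(t)}\varphi(t,u)$.

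\textbf{Measurability and the bad set.} Let $\{s_j\}$ be a Castaing representation of $S$. Then $\varphi(t,s_j(t))$ is measurable because $\varphi$ is a normal integrand. Lower semicontinuity of $\varphi(t,\cdot)$ does not give $m(t)=\inf_j\varphi(t,s_j(t))$, so we avoid $m$ altogether. Instead we work with the set where the inequality fails strictly against some selector:
\[
E:=\bigcup_{j}\{t: \varphi(t,s_j(t))<\varphi(t,x_0(t))\}.
\]
Suppose first that $\mu(E)=0$. If the infimum over $S(t)$ were strictly smaller than $\varphi(t,x_0(t))$ on a set of positive measure, a measurable selection of $t\mapsto S(t)\cap\{u:\varphi(t,u)<\varphi(t,x_0(t))\}$ would yield the same contradiction as below. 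This set-valued map is measurable with graph in $\Sigma\otimes\mathcal B(Z)$, and by completeness of $\mu$ the Aumann selection theorem gives a measurable selector. So it suffices to treat a measurable selector $s$ of $S$ with $\varphi(t,s(t))<\varphi(t,x_0(t))$ on a set $A$ of positive measure. Once this case is excluded, the minimum over $S(t)$ is attained at $x_0(t)$ a.e., as claimed.

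\textbf{Needle variation.} Shrinking $A$, we may assume $0<\mu(A)<\infty$, $\norm{s(t)}+\norm{x_0(t)}\le c$ on $A$, and $\varphi(t,x_0(t))-\varphi(t,s(t))\ge\gamma>0$ on $A$ with $\varphi(t,s(t))$ finite. For small measure, nonatomicity gives measurable $A_\varepsilon\subset A$ with $0<\mu(A_\varepsilon)\le\varepsilon$; this is exactly where nonatomicity is used. Set $x_\varepsilon:=\ind_{A_\varepsilon}s+\ind_{T\setminus A_\varepsilon}x_0$. By decomposability $x_\varepsilon\in\Hp(S)$, and
\[
\norm{x_\varepsilon-x_0}_{L^p}\le 2c\,\varepsilon^{1/p}\le\rho
\]
for small $\varepsilon$.

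\textbf{Computing the upper integral.} If $\sigma\ge\varphi(\cdot,x_0)$ is integrable, then $\sigma':=\sigma-\gamma\ind_{A_\varepsilon}$ dominates $\varphi(\cdot,x_\varepsilon)$. Taking the infimum over such $\sigma$ gives
\[
I_\varphi[x_\varepsilon]\le I_\varphi[x_0]-\gamma\mu(A_\varepsilon)<I_\varphi[x_0],
\]
which contradicts local optimality. The care needed is only that the upper integral is subadditive in this patching sense. This is the one step to check, and the reverse inequality is not needed.

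\textbf{Main obstacle.} The main difficulty is the measurable-selection step. The strict sublevel map $t\mapsto S(t)\cap\{u:\varphi(t,u)<\varphi(t,x_0(t))\}$ need not have closed values, so the Castaing theorem does not apply directly. I would handle it in one of two ways. The first is via graph measurability (Aumann--von Neumann) on the complete measure space. The second is to intersect with the closed sets $\{u:\varphi(t,u)\le\varphi(t,x_0(t))-1/n\}$, which are closed by lower semicontinuity, and take $n$ large on a positive-measure piece.
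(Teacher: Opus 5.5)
The paper gives no proof of this lemma; it is quoted from Giner. So there is no in-paper argument to compare with, and your proof is correct and self-contained.

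The classical route to statements of this kind has two steps. First one shows, using nonatomicity (typically through a Lyapunov-type splitting of the set where a competitor differs from $x_0$), that a local minimizer of $I_\varphi$ over the decomposable set $\Sel_p(S)$ is a global one. Then one passes to the pointwise statement via Rockafellar's interchange of infimum and integral over decomposable sets.

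You go directly to the pointwise statement. A measurable selection of the strict sublevel map produces the competitor $s$; this can be the Aumann--von Neumann selection, or a Kuratowski--Ryll-Nardzewski selection of the closed maps with gap $1/n$, as you also suggest. Truncation then gives a set $A$ with $\mu(A)<\infty$, a uniform gap $\gamma$ and a uniform bound $c$. A single needle on a subset of small positive measure then contradicts local optimality through the explicit domination $\sigma-\gamma\ind_{A_\varepsilon}\geq\varphi(\cdot,x_\varepsilon(\cdot))$.

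Your route avoids both Lyapunov's theorem and the interchange theorem, whose proof rests on the same measurable selection you use. Global minimality of $x_0$ over $\Sel_p(S)$ then follows for free from the pointwise inequality $\varphi(t,x_0(t))\le\varphi(t,x(t))$ a.e. In exchange, the two-step route produces the value formula $\inf_{\Sel_p(S)} I_\varphi=\int_T\inf_{u\in S(t)}\varphi(t,u)\,d\mu(t)$ along the way, under the usual finiteness assumptions.

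Two minor remarks.

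First, your measurability claims require $\varphi$ to be a normal integrand in Rockafellar's sense. This covers $t\mapsto\varphi(t,s(t))$, $t\mapsto\varphi(t,x_0(t))$, and the graph of the sublevel map. Rockafellar's sense means $t\mapsto\operatorname{epi}\varphi(t,\cdot)$ is measurable, which makes $\varphi$ $\Sigma\otimes\mathcal{B}(Z)$-measurable. The paper's gloss ``measurable in $t$ and lower semicontinuous in $u$'' would not suffice by itself, so you are right to rely on the standard notion.

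Second, the set $E$ built from the Castaing representation plays no role once you pass to the selection argument, and can be deleted. The needle estimate is in fact $\norm{x_\varepsilon-x_0}_{L^p}\le c\,\varepsilon^{1/p}$.
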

	
	\begin{maintheorem}
		Let $(T,\Sigma,\mu)$ be a complete, $\sigma$-finite, nonatomic measure space. Let $Z$ be a separable reflexive Banach space, let $1 \leq p < \infty,$ and let $q$ be the conjugate exponent of $p.$ Let $S :T \rightrightarrows Z$ be measurable with nonempty closed values, and let $\varphi: T \times Z \to \RR \cup\{+\infty\}$ be a normal integrand. Assume that
		\begin{enumerate}[(i)]
			\item $x_0$ is a local solution of $(\textbf{IP})$ with $I_\varphi[x_0] \in \RR;$ 
			
			\item $epi \varphi(t,\cdot)$ is compactly epi-Lipschitz at $(x_0(t),\varphi(t,x_0(t)))$ for a.e. $t \in T;$
			
			\item An arbitrary weight $\theta \in L^q(T) \cap L^\infty(T)$ with $\theta(t) >0$ a.e. is chosen. 
		\end{enumerate}
		Then there exists
		$$
		\xi \in L^q(T,Z^{*})  \hspace{0.5cm} \text{and} \hspace{0.5cm} \eta \in L^\infty(T)
		$$
		such that
		\begin{enumerate}[(a)]
			\item $0 \leq \eta(t) \leq \theta(t)$ and
			$\norm{\xi(t)} + \eta(t) = \theta(t)$ a.e. $t \in T;$
			
			\item $\int_T \left< \xi(t), v(t) \right> \dd \mu(t) \leq 0$ for every $v \in \CT_{\Hp(S)}(x_0);$
			
			\item $\int_T \left< \xi(t), w(t) \right> \dd \mu(t) + \int_T \eta(t) \sigma(t) \dd \mu(t) \geq 0$ \newline
			for every $(w,\sigma) \in \CT_{\Hpone(t \mapsto epi \varphi(t,\cdot))}(x_0(\cdot), \varphi(\cdot, x_0(\cdot))).$
		\end{enumerate}
	\end{maintheorem}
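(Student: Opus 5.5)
The plan is to work pointwise and then glue the pointwise multipliers together with a measurable selection. By the localization lemma of Giner stated just before the theorem, $x_0(t)$ minimizes $\varphi(t,\cdot)$ over $S(t)$ for a.e. $t$. Equivalently, for a.e. $t$ the point $(x_0(t),\varphi(t,x_0(t)))$ is extremal for the pair of closed sets $S(t)\times\RR$ and $\operatorname{epi}\varphi(t,\cdot)$. More precisely, the set $\operatorname{epi}\varphi(t,\cdot)$ does not meet $(S(t)\times\RR)$ shifted downward by $(0,\varepsilon)$, for any $\varepsilon>0$.

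Assumption (ii) makes the epigraph compactly epi-Lipschitz at that point, and this is exactly the hypothesis under which the Clarke-normal version of the extremal principle / Lagrange multiplier rule (e.g. Theorem 3.3 in \cite{BivasKrastanovRibarska2020-tangential-transversality}, or the classical compact epi-Lipschitz intersection rule) applies. For a.e. $t$ it therefore yields a nonzero pair $(\zeta,\tau)\in Z^{*}\times\RR$ with the following properties:
\begin{itemize}
\item[(1)] $\zeta\in\CN_{S(t)}(x_0(t))$;
\item[(2)] $\tau\ge 0$ (the sign coming from the epigraph structure);
\item[(3)] $(-\zeta,-\tau)\in\CN_{\operatorname{epi}\varphi(t,\cdot)}(x_0(t),\varphi(t,x_0(t)))$.
\end{itemize}
Written in the tangent form, (1) and (3) say that $\langle\zeta,v\rangle\le 0$ on $\CT_{S(t)}(x_0(t))$ and $\langle\zeta,w\rangle+\tau\sigma\ge0$ on the Clarke tangent cone of the epigraph.

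Since the pair is nonzero, we normalize it so that $\norm{\zeta}+\tau=\theta(t)>0$. The set $\Lambda(t)$ of such normalized pairs is a closed set (in $Z^*$ with the norm topology, which is separable because $Z$ is reflexive). Its values are nonempty a.e.

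The key technical step, and what I expect to be the main obstacle, is the measurability of $\Lambda$. Two facts enter here:
\begin{itemize}
\item the multifunctions $t\mapsto\CT_{S(t)}(x_0(t))$ and $t\mapsto\CT_{\operatorname{epi}\varphi(t,\cdot)}(x_0(t),\varphi(t,x_0(t)))$ are measurable, by the measurability lemma of Section 3 applied to $S$ and to $\operatorname{epi}\varphi$, which has measurable graph since $\varphi$ is a normal integrand;
\item the polar of a measurable closed-cone-valued map is measurable, shown via a Castaing representation $\{c_n(t)\}$ of the cone: a pair lies in $\Lambda(t)$ iff countably many measurable inequalities $\langle\zeta,c_n(t)\rangle\le0$ hold, together with the normalization.
\end{itemize}
Once $\Lambda$ is measurable, the Kuratowski–Ryll-Nardzewski theorem gives a measurable selection $(\xi(t),\eta(t))\in\Lambda(t)$. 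Then $0\le\eta\le\theta$ and $\norm{\xi(t)}\le\theta(t)$, so $\xi\in L^q(T,Z^{*})$ and $\eta\in L^\infty$, and (a) holds.

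For (b), note that the pointwise inequality says $\xi(t)\in\CN_{S(t)}(x_0(t))$ a.e. By Theorem \ref{thorem:Main-Lp-theorem}, any $v\in\CT_{\Hp(S)}(x_0)$ satisfies $v(t)\in\CT_{S(t)}(x_0(t))$ a.e. Hence $\langle\xi(t),v(t)\rangle\le0$ a.e., and the integral is finite by Hölder; integrating gives (b).

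For (c), take $(w,\sigma)$ in the Clarke tangent cone to $\Hpone(t\mapsto\operatorname{epi}\varphi(t,\cdot))$ at $(x_0,\varphi(\cdot,x_0(\cdot)))$. The base point lies in $L^p\times L^1$ because $I_\varphi[x_0]$ is finite and $\varphi(t,x_0(t))$ equals the pointwise minimum. By the global-to-pointwise Theorem \ref{theorem:productMainTheorem-global-to-pointwise-p-r} with exponents $(p,1)$, we get $(w(t),\sigma(t))\in\CT_{\operatorname{epi}\varphi(t,\cdot)}(\dots)$ a.e. Then $\langle\xi(t),w(t)\rangle+\eta(t)\sigma(t)\ge0$ a.e. The integrand is integrable because $\xi\in L^q$, $w\in L^p$, $\eta\in L^\infty$ and $\sigma\in L^1$, so integrating gives (c).

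A secondary point to check is the pointwise multiplier rule itself. I would verify directly that extremality of the two sets plus compact epi-Lipschitzness of one of them gives a nontrivial Clarke normal pair. If needed, I would use the Borwein–Strojwas compact epi-Lipschitz intersection theorem, since otherwise both components could vanish in infinite dimensions.
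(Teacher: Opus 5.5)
Your proposal follows the paper's proof essentially step for step: Giner's localization lemma, a pointwise Clarke-normal multiplier rule under compact epi-Lipschitzness (the paper cites Corollary 5.6 of \cite{BivasKrastanovRibarska2020-tangential-transversality}), a measurable selection from the polar-type multiplier multifunction, normalization by $\theta$, and integration. Your direct normalization $\norm{\zeta}+\tau=\theta(t)$ inside $\Lambda(t)$, and your explicit appeal to Theorem~\ref{theorem:productMainTheorem-global-to-pointwise-p-r} for (b) and (c), only streamline steps the paper does in stages or leaves implicit. One slip needs fixing: $S(t)\times\RR$ is invariant under vertical shifts, so the extremal pair should be $S(t)\times(-\infty,\varphi(t,x_0(t))]$ and $\operatorname{epi}\varphi(t,\cdot)$. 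The normal cone of that first set, $\CN_{S(t)}(x_0(t))\times[0,\infty)$, is exactly what gives your sign condition $\tau\ge 0$.
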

	
	\begin{proof}
		Let $x_0$ be \emph{local} solution of ($\textbf{IP}$). Then $x_0(t)$ is for a.e. $t \in T$ a global solution of
		$$
		\hspace{2cm} \varphi(t,u) \to \min \hspace{0.5cm} \text{subject to $u \in S(t)$.}
		$$
		Now $u \mapsto \varphi(t,u)$ is proper lower semicontinuous and $epi \varphi(t,\cdot)$ is compactly epi-Lipscthiz at $(x_0(t),\varphi(t,x_0(t)))$ a.e. $t \in T.$ Applying Corollary 5.6 from \cite{BivasKrastanovRibarska2020-tangential-transversality} we obtain that for almost every $t \in T$ there exists $(\widehat{\xi}_t, \widehat{\eta}_t) \in Z^{*} \times \{0,1\}$ such that
		
		\begin{align} \label{eq:three-pointwise-lagrange-multiplier-conditions}
			(\widehat{\xi}_t, \widehat{\eta}_t) &\neq (0,0) \\
			\left<\widehat{\xi}_t, v_t \right> &\leq 0, \hspace{0.2cm} \forall v_t \in \CT_{S(t)}(x_0(t)) \\
			\left<\widehat{\xi}_t, w_t \right> + \widehat{\eta}_t s_t &\geq 0, \hspace{0.2cm} \forall (w_t,s_t) \in \CT_{epi \varphi(t,\cdot)}(x_0(t),\varphi(t,x_0(t))).
		\end{align}
		We now claim that there exists measurable functions $\widehat{\xi}: T \to Z^{*}$ and $\widehat{\eta} : T \to \{0,1\}$ satisfying the three properties from above pointwise.
		Consider the multivalued mapping
		$$
		t \mapsto M(t):=(M_0(t) \times  \{0\}) \cup (M_1(t) \times \{1\}),
		$$
		where
		\begin{align*}
			M_0(t):&=\Big\{\zeta \in Z^{*} : \norm{\zeta}=1, \begin{array}{l} \hspace{0.1cm} \left<\zeta,v \right> \leq 0, \hspace{0.1cm} \forall v \in \CT_{S(t)}(x_0(t)), \\ \hspace{0.1cm} \left<\zeta , w \right> \geq 0,  \forall (w,s) \in \CT_{epi \varphi(t,\cdot)}(x_0(t),\varphi(t,x_0(t))) \end{array} \Big\} \\
			M_1(t) :&=\Big\{ \zeta \in Z^{*} : \begin{array}{l} \left<\zeta, v\right> \leq 0, \hspace{0.1cm} \forall v \in \CT_{S(t)}(x_0(t)), \\
				\left<\zeta, w \right> + s \geq 0, \hspace{0.1cm} \forall (w,s) \in \CT_{epi \varphi(t,\cdot)}(x_0(t),\varphi(t,x_0(t)))  \end{array} \Big\}
		\end{align*}
		The multifunctions $t\mapsto \CT_{S(t)}(x_0(t))$ and $t \mapsto \CT_{epi \varphi (t,\cdot)}(x_0(t),\varphi(t,x_0(t))$ are measurable with nonempty closed values and the duality brackets are continuous. Therefore $M_{0}(\cdot),M_{1}(\cdot)$ are measurable multivalued mappings with closed values such that $M_0(t)\cup M_1(t)$ is nonempty for a.e. $t \in T.$ Therefore $M(\cdot)$ is measurable with nonempty closed values. There exist measurable selector for $M(\cdot)$
		$$
		(\widehat{\xi}(\cdot),\widehat{\eta}(\cdot)) : T \to Z^{*} \times \{0,1\}
		$$ 
		
		Let $\widehat{\xi} : T \to Z^{*}$ and $\widehat{\eta} : T \to \{0,1\}$ be the obtained  measurable pointwise multipliers. We then pick for a.e. $t \in T$
		$$
		(\tilde{\xi}(t),\tilde{\eta}(t)):=\lambda(t) (\widehat{\xi}(t),\widehat{\eta}(t)), \hspace{0.2cm} \text{where} \hspace{0.2cm} \lambda(t):=\frac{1}{\norm{\widehat{\xi}(t)} + \widehat{\eta}(t)} >0.
		$$
		Since the second and third inequalities in the multiplier conditions are positively homogeneous we get that $(\widetilde{\xi}(t),\widetilde{\eta}(t))$ satisfy them as well a.e. That is for a.e. $t \in T$ it holds
		\begin{align}
			\norm{\tilde{\xi}(t)}+\tilde{\eta}(t) &=1, \\
			\left< \tilde{\xi}(t), v_t \right> &\leq 0, \hspace{0.2cm} v_t \in \CT_{S(t)}(x_0(t)) \\
			\left<\tilde{\xi}(t), w_t \right> + \tilde\eta(t) s_t &\geq 0, \hspace{0.2cm} (w_t,s_t) \in \CT_{epi \varphi(t,\cdot)}(x_0(t), \varphi(t,x_0(t))).
		\end{align}
		Let us note that the first condition, from above, implies that $\tilde{\xi} \in L^\infty(T,Z^{*})$ and $\eta \in L^\infty(T).$ So in particular in the case of finite measure space, this normalization of pointwise measurable multipliers produces summable infinite-dimensional ones. In general we can once again make use of positive homogenity and contorol the multipliers with a suitable weight. 
		
		Let $\theta \in L^q(T) \cap L^\infty(T)$ with $\theta(t) > 0$ a.e. Such weight exists for any $\sigma$-finite space and can be explicitly constructed. Indeed. If $p=1$ one could simply pick $\theta(t)=1.$ Else the conjugate exponent $q$ of $p$ is finite and let $T=\bigcup\{T_n : n \in \NN\}$ be a pairwise disjoint partition with $\mu(T_n)<+\infty.$ Then the following choice will suffice
		$$\theta(t):=\sum_{n=1}^\infty \frac{2^{-n}}{(1+\mu(T_n))^{1/q}} \ind_{T_n}(t).$$
		
		Then again by possitive homogenity pass to the multipliers
		$$
		(\xi(t),\eta(t)):=\theta(t)(\tilde{\xi}(t), \tilde{\eta}(t)), \hspace{0.2cm} \text{a.e.}
		$$
		Finally we conclude
		\begin{enumerate}[(a)]
			\item  $\norm{\xi(t)}+\eta(t)=\theta(t)$ and $0 \leq \eta(t) \leq \theta(t)$ a.e. \newline Consequently $\xi \in L^q(T,Z^{*})$ and $\eta \in L^\infty(T);$
			
			\item $\int_T \left< \xi(t), v(t) \right> \dd \mu(t) \leq 0$ for every $v \in \CT_{\Hp(S)}(x_0)$
			
			\item $\int_T \left< \xi(t), w(t) \right> \dd \mu(t) + \int_T \eta(t)\sigma(t) \dd \mu(t) \geq 0$ for every $(w,\sigma) \in L^p(T,Z) \times L^1(T)$ such that $(w(t),\sigma(t)) \in \CT_{epi \varphi(t,\cdot)}(x_0(t),\varphi(t,x_0(t)))$ a.e.
		\end{enumerate}
	\end{proof}

\end{document}